\documentclass[11pt]{amsart}
\usepackage{amscd,amsfonts,amssymb,amsmath,amsthm,latexsym}

\usepackage[dvips]{graphicx}
\usepackage{graphics}

\usepackage{geometry}
\usepackage{mathtools}
\usepackage{enumerate}
\usepackage{float}
\usepackage{color}
\usepackage{url}
\usepackage{array}
\usepackage{xcolor}
\usepackage{tikz}
\usetikzlibrary{calc}
\usetikzlibrary{decorations.markings}

\usepackage[all]{xy}
\xyoption{curve}
\xyoption{import}
\xyoption{arc}
\xyoption{ps}

\DeclareMathAlphabet{\mathpzc}{OT1}{pzc}{m}{it}

\numberwithin{equation}{section}

\theoremstyle{plain}
    \newtheorem{thm}{Theorem}[section]
    \newtheorem{lemma}[thm]{Lemma}
    \newtheorem{coro}[thm]{Corollary}
    \newtheorem{prop}[thm]{Proposition}

\theoremstyle{definition}
    \newtheorem{defi}[thm]{Definition}
    \newtheorem{ex}[thm]{Example}
    \newtheorem{remark}[thm]{Remark}
\theoremstyle{remark}
    
    \newtheorem{claim}{Claim}

\newcommand{\suchthat}{\ | \ }

\newcommand{\field}{K}

\newcommand{\marked}{\mathbb{M}}

\newcommand{\surf}{(\Sigma,\marked)}

\newcommand{\RA}[1]{\field\langle\hspace{-0.05cm}\langle #1\rangle\hspace{-0.05cm}\rangle}

\newcommand{\myid}{1\hspace{-0.15cm}1}

\newcommand{\Qtau}{Q(\tau)}

\newcommand{\Staux}{S(\tau,\mathbf{x})}

\newcommand{\Qsigma}{Q(\sigma)}

\newcommand{\Stauxn}{S(\tau,x,n)}
\newcommand{\Ssigmaxn}{S(\sigma,x,n)}



\newdir{((}{{}*!/-2\jot/\dir^{(}}
\newdir{(((}{{}*!/-2.8\jot/\dir^{(}}

\newcommand{\Ttau}{T(\tau)}
\newcommand{\Tsigma}{T(\sigma)}
\newcommand{\short}{\operatorname{short}}
\newcommand{\depth}{\operatorname{depth}}
\newcommand{\xx}{\mathbf{x}}
\newcommand{\val}{\operatorname{val}}
\newcommand{\Gsymbol}{\mathcal{G}}


\setlength{\textheight}{595pt}
\addtolength{\voffset}{-10pt}
\addtolength{\textheight}{55pt}
\addtolength{\textwidth}{35pt}
\addtolength{\evensidemargin}{-40pt}
\addtolength{\headsep}{10pt}
\linespread{1.1}

\title[QPs associated to triangulations of closed surfaces with at most two punctures]{Quivers with potentials associated to triangulations of closed surfaces with at most two punctures}

\subjclass[2010]{Primary 16P10, 16G20; Secondary 13F60, 57N05, 05E99}
\keywords{Surface, marked points, punctures, triangulation, flip, quiver, potential, mutation, non-degenerate potential}

\author{Jan Geuenich}
\address{Jan Geuenich\newline
Fakult\"{a}t f\"{u}r Mathematik\newline 
Universit\"{a}t Bielefeld\newline 
Germany}
\email{jgeuenich@math.uni-bielefeld.de}

\author{Daniel Labardini-Fragoso}
\address{Daniel Labardini-Fragoso\newline
Instituto de Matem\'aticas\newline
Universidad Nacional Aut{\'o}noma de M{\'e}xico\newline
Mexico}
\email{labardini@im.unam.mx}

\author{Jos\'e Luis Miranda-Olvera}
\address{Jos\'e Luis Miranda-Olvera\newline
Department of Mathematical Sciences\newline
Carnegie Mellon University\newline
USA}
\email{joseluismiranda@cmu.edu}

\date{\today}

\begin{document}

  \begin{abstract}
We tackle the classification problem of non-degenerate potentials for quivers arising from triangulations of surfaces in the cases left open by Geiss-Labardini-Schr\"{o}er. Namely, for once-punctured closed surfaces of positive genus, we show that the quiver of any triangulation admits infinitely many non-degenerate potentials that are pairwise not weakly right-equivalent; we do so by showing that the potentials obtained by adding the 3-cycles coming from triangles and a fixed power of the cycle surrounding the puncture are well behaved under flips and QP-mutations. For twice-punctured closed surfaces of positive genus, we prove that the quiver of any triangulation admits exactly one non-degenerate potential up to weak right-equivalence, thus confirming the veracity of a conjecture of the aforementioned authors.
\end{abstract}

  \maketitle

  \tableofcontents

  \section{Introduction}

Albeit technical in nature, the problem of classifying 
all non-degenerate potentials on a given 2-acyclic quiver is relevant in different interesting, seemingly unrelated, contexts. In cluster algebra theory, having only one weak right-equivalence class means, very roughly speaking, that Derksen-Weyman-Zelevinsky's representation-theoretic approach to the corresponding cluster algebra can be performed in essentially only one way.

The classification problem of non-degenerate potentials plays a role also in algebraic geometry and in symplectic geometry (more precisely, in the subjects of Bridgeland stability conditions and Fukaya categories). In \cite[Theorem 9.9]{Bridgeland-Smith}, the uniqueness of non-degenerate potentials on the quivers arising from positive genus closed surfaces with at least three punctures is used by Tom Bridgeland and Ivan Smith to prove that there is a short exact sequence
$$
\xymatrix{
1 \ar[r] & \mathpzc{Sph}_\triangle(\mathcal{D}\surf) \ar[r] &\mathpzc{Aut}_{\triangle}(\mathcal{D}\surf) \ar[r] & \operatorname{MCG}^{\pm}\surf \ar[r] & 1,
}
$$
where $\mathcal{D}\surf$ is the 3-Calabi-Yau triangulated category associated to $\surf$, defined as the full subcategory that the dg-modules with finite-dimensional cohomology determine inside the derived category of the Ginzburg dg-algebra of the quiver with potential of \textbf{any}\footnote{That $\mathcal{D}\surf$ is independent of the tagged triangulation used follows after combining results of Keller-Yang \cite{Keller-Yang} and Labardini \cite{Labardini4}, see \cite[Section 5]{Labardini-survey}.} tagged triangulation of $\surf$, the group $\mathpzc{Aut}_{\triangle}(\mathcal{D}\surf)$ is the quotient of the group of auto-equivalences of $\mathcal{D}\surf$ that preserve the distinguished connected component $\operatorname{Tilt}_{\triangle}(\mathcal{D}\surf)$ by the subgroup of auto-equivalences that act trivially on $\operatorname{Tilt}_{\triangle}(\mathcal{D}\surf)$, $\mathpzc{Sph}_\triangle(\mathcal{D}\surf)$ is the subgroup of $\mathpzc{Aut}_{\triangle}(\mathcal{D}\surf)$ generated by (the quotient images of) the twist functors at the simple objects of an heart $\mathcal{A}\in \operatorname{Tilt}_{\triangle}(\mathcal{D}\surf)$,  and $\operatorname{MCG}^{\pm}\surf=\operatorname{MCG}\surf\ltimes\mathbb{Z}_2^{\marked}$ is the signed mapping class group.

In \cite[Theorem 1.1]{Smith}, Ivan Smith shows that if $\surf$ is a positive genus closed surface with at least three punctures (i.e., $|\marked|\geq 3$), then there is a linear fully faithful embedding of the 3-Calabi-Yau triangulated category $\mathcal{D}\surf$ into a Fukaya category of a 3-fold that fibers over $\Sigma$ (with poles of a quadratic differential removed from $\Sigma$). He explains that the reason behind the hypothesis $|\mathbb{M}| \geq 3$ in \cite[Theorem 1.1]{Smith} arises from the fact that for positive genus closed surfaces with at least three punctures, the quiver of any triangulation has exactly one non-degenerate potential up to weak right-equivalence (a fact shown by Geiss-Labardini-Schr\"oer \cite{GLFS}).  See \cite[Sections 1.3 and~2.2]{Smith}. 

Together with results from his work \cite{Bridgeland-Smith} with Bridgeland, the embeddings from the previous paragraph allow Smith to obtain non-trivial computations of spaces of stability conditions on Fukaya categories of symplectic six-manifolds.

In this paper we prove the following:

\begin{thm}\label{main-thm-intro}\begin{enumerate}\item\label{item:main-thm-one-puncture} For once-punctured closed surfaces of positive genus, the quiver of any triangulation admits infinitely many non-degenerate potentials that are pairwise not weakly right-equivalent, provided the underlying field has characteristic zero.
\item\label{item:main-thm-two-punctures} For twice-punctured closed surfaces of positive genus, the quiver of any triangulation admits exactly one non-degenerate potential up to weak right-equivalence, provided the underlying field is algebraically closed.
\end{enumerate}
\end{thm}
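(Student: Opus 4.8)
The plan is to obtain both parts from a single mechanism — a controlled dictionary between flips of (tagged) triangulations and QP-mutations — and to read opposite conclusions off it. I would start with \eqref{item:main-thm-one-puncture}. Fix a once-punctured closed surface $\surf$ of genus $g\ge 1$, with puncture $p$, and pass first to triangulations without self-folded triangles (these exist here, and the general case follows afterwards because all $\Qtau$ lie in one QP-mutation class). For such a triangulation $\tau$ and each integer $n\ge 1$ put
\[
\Stauxn \;=\; \sum_{\triangle}\omega_\triangle \;+\; \lambda_p(\tau)^{\,n},
\]
where $\omega_\triangle$ is the $3$-cycle of a triangle $\triangle$ and $\lambda_p(\tau)$ is the cycle of $\Qtau$ surrounding $p$; the scalar tuple $\xx$ (whose placement on selected arrows I suppress) is carried along only so that the induction is self-contained. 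The first and, I expect, by far the longest step is a flip-compatibility statement: for every flippable arc $k$, with $\sigma=\flip_k(\tau)$, the QP $\mu_k(\Qtau,\Stauxn)$ is right-equivalent to $(\Qsigma,\Ssigmaxn)$ after a rescaling of $\xx$. This is Labardini's treatment of the standard ($n=1$) case redone with the extra burden of tracking how the higher-degree terms produced by premutation and reduction are absorbed into a right-equivalence; it splits into the usual local configurations (a generic quadrilateral around $k$, a quadrilateral containing a self-folded triangle, and the tagged variants), and it is here that $\operatorname{char}\field=0$ is used, to invert the integers that appear.

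Granting this, non-degeneracy of $\Stauxn$ is immediate: flips connect all triangulations, the family $\{(\Qtau,\Stauxn)\}_\tau$ is closed under QP-mutation, and every $\Qtau$ is $2$-acyclic, so no $2$-cycle can ever appear. It remains to show that $\Stauxn$ is not weakly right-equivalent to the analogous potential with exponent $m$ when $n\ne m$. I would use the filtration of the complete path algebra $\RA{\Qtau}$ by path length: a weak right-equivalence induces, after an automorphism of $\Qtau$, a graded automorphism whose linear part fixes the cubic part $\sum_\triangle\omega_\triangle$ up to a nonzero scalar and hence descends to an automorphism of $\Jacalg{\Qtau,\sum_\triangle\omega_\triangle}$. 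Comparing the two sides in the first degree in which $\lambda_p(\tau)^{\,n}$ becomes visible — namely $n\cdot\operatorname{length}\lambda_p(\tau)$ — and using that the class of $\lambda_p(\tau)$ and its powers are not killed in that Jacobian algebra, one reads off $n=m$; characteristic zero again prevents the relevant homogeneous component from collapsing.

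For \eqref{item:main-thm-two-punctures} existence is obtained exactly as above (or quoted from earlier work): the standard potential $S(\tau)=\sum_\triangle\omega_\triangle+\lambda_{p_1}(\tau)+\lambda_{p_2}(\tau)$ is non-degenerate because its QP-mutation class consists of $2$-acyclic quivers. The content is uniqueness, so let $W$ be an arbitrary non-degenerate potential on $\Qtau$ for a conveniently chosen tagged triangulation $\tau$ of the twice-punctured genus-$g$ surface — no self-folded triangles, the two punctures joined in a standard way, with sub-configurations isolating each puncture. Using right-equivalences I would first normalise the low-degree part of $W$: rescaling arrows — this is where algebraic closedness enters, to turn the relevant cubic coefficients into $1$ — one may assume that $W$ agrees with $S(\tau)$ through degree $3$ and contains both puncture-cycle terms. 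Then I would run the flip machinery of part \eqref{item:main-thm-one-puncture}, localised near $p_1$: because the \emph{second} puncture $p_2$ is still present, the flip sequences available around $p_1$ are strictly richer than on the once-punctured surface, and I expect them to force, after a further right-equivalence, every higher-degree contribution attached to $p_1$ to collapse onto $\lambda_{p_1}(\tau)$ to the \emph{first} power — the exotic powers $\lambda_{p_1}(\tau)^{\,n}$, $n\ge 2$, now create a $2$-cycle under some flip and are excluded — and symmetrically at $p_2$. A concluding degree-by-degree right-equivalence argument clears the remaining higher-order discrepancy, giving $W$ weakly right-equivalent to $S(\tau)$.

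In both parts the crux is the same: converting a purely local flip computation into a statement about the \emph{entire} potential. For \eqref{item:main-thm-one-puncture} this is the bookkeeping of higher-degree corrections over the full list of local configurations; for \eqref{item:main-thm-two-punctures} it is proving that the normalisation of an arbitrary non-degenerate $W$ really terminates at $S(\tau)$ and not merely at $S(\tau)$ modulo terms invisible to any finite sequence of flips. Since the Jacobian algebras involved are infinite-dimensional, this cannot rest on a naive ``agreement up to high degree implies equality'' principle; I would instead interleave an induction on path length with the Derksen-Weyman-Zelevinsky genericity of non-degeneracy — non-degenerate potentials forming, in the right sense, a single orbit once finitely many obstructions are imposed — to close the argument.
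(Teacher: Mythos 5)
Your high-level outline for part~\eqref{item:main-thm-one-puncture} matches the paper: fix the family $\Stauxn=\Ttau+x\,\Gsymbol(p)^n$, prove a flip-compatibility theorem showing $\mu_k(\Qtau,\Stauxn)\sim_{\operatorname{r.e.}}(\Qsigma,\Ssigmaxn)$, and deduce non-degeneracy from $2$-acyclicity plus closure under mutation. But the step that actually distinguishes different $n$ is where your argument breaks. You claim a weak right-equivalence ``induces a graded automorphism'' and that one can read off $n$ from ``the first degree in which $\Gsymbol(p)^n$ becomes visible.'' Right-equivalences of $\RA{\Qtau}$ are not graded, and the degree at which a given term survives is not obviously an invariant: the entire point of the paper's Section~\ref{sec:preliminaries} (Lemma~\ref{lemma:Af-Afg-inequalities}, Proposition~\ref{prop:replacing-with-sums-of-powers-of-g-cycles}) is that unitriangular automorphisms can push $f$- and $fg$-terms to arbitrarily high degree, so controlling what happens to low-degree terms requires the full $f$/$g$/$fg$ bookkeeping, which you do not supply. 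The paper instead uses the genuinely invariant quantity $\dim_K\mathcal{P}(\Qtau,\Stauxn)$, shows it is finite and tends to $\infty$ with $n$ (Proposition~\ref{prop:unbounded-dimensions}); that is where $\operatorname{char}\field=0$ is used (to invert $n$ when killing long $g$-paths), \emph{not} in the flip-compatibility step, where no integer denominators occur.

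For part~\eqref{item:main-thm-two-punctures} the mechanism you propose is not the correct one. You say the ``exotic powers $\Gsymbol(p_1)^n$, $n\ge 2$, now create a $2$-cycle under some flip and are excluded.'' In fact, potentials of the form $\Staux+V$ with $V$ a sum of $\ge2$-powers of the $g$-cycles are non-degenerate --- they do \emph{not} create $2$-cycles; they are absorbed into $\Staux$ by an infinite composition of unitriangular right-equivalences built from Lemma~\ref{lemma:very-technical-lemma} and Corollary~\ref{coro:essential} (this is Lemma~\ref{lemma:last-cancellation--g-powers}). The input that is obtained by a flip sequence is much more specific: Proposition~\ref{prop:g-cycles-always-appear-2-punctures} uses the $2g$-step flip sequence that swaps the two punctures to show that \emph{both} first powers $\Gsymbol(p)$ and $\Gsymbol(q)$ must appear with nonzero coefficient, and it is the nonvanishing of those first-power coefficients that powers the absorption. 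Finally, your closing appeal to ``DWZ genericity --- non-degenerate potentials forming a single orbit once finitely many obstructions are imposed'' cannot be right as a general principle here, because it would contradict part~\eqref{item:main-thm-one-puncture} of the very theorem you are proving: for once-punctured surfaces there are infinitely many weak right-equivalence classes. The actual proof must use, and does use, a structural feature peculiar to having at least two punctures.
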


Let $\surf$ be a once-punctured closed surface, $n$ a positive integer and $x\in\field$ any scalar. For a triangulation $\tau$ of $\surf$ let $\Stauxn$ be the potential obtained by adding the 3-cycles of $Q(\tau)$ arising from triangles of $\tau$ and the $x$-multiple of the $n^{\operatorname{th}}$ power of the cycle of $Q(\tau)$ that runs around the puncture of $\surf$. The following result of independent interest plays a central role in our proof of part \eqref{item:main-thm-one-puncture} of Theorem \ref{main-thm-intro}:

\begin{thm} Let $\surf$ be a once-punctured closed surface. If $\tau$ and $\sigma$ are triangulations of $\surf$ related by the flip of an arc $k\in\tau$, then the quivers with potential $(Q(\tau),\Stauxn)$ and $(Q(\sigma),\Ssigmaxn)$ are related by the mutation of quivers with potential $\mu_k$.
\end{thm}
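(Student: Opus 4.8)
The plan is to imitate the proof, due to Labardini-Fragoso, that arc flips correspond to QP-mutations for the ``plain'' potentials $\sum_{\Delta}S_{\Delta}$, and on top of it to keep careful track of the extra summand $x\,\omega(\tau)^{n}$, where $\omega(\tau)$ is the cycle of $Q(\tau)$ that runs once around the puncture and $S_{\Delta}$ is the $3$-cycle of a triangle $\Delta$. First I would reduce to a local computation: the flip of $k$ alters $\tau$ only inside the quadrilateral $Q_{k}$ having $k$ as a diagonal, while $\mu_{k}$ (premutation followed by Derksen--Weyman--Zelevinsky reduction) only modifies the arrows of $Q(\tau)$ incident to $k$ and the potential terms involving such arrows. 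Since a once-punctured closed surface has a single marked point it admits no self-folded triangles, so the local picture is always an honest quadrilateral $Q_{k}=P_{1}P_{2}P_{3}P_{4}$ with $k=P_{1}P_{3}$, possibly degenerated by identifying some of the four sides $a=P_{1}P_{2}$, $b=P_{2}P_{3}$, $c=P_{3}P_{4}$, $d=P_{4}P_{1}$; the argument is then a finite case analysis over these identification patterns, as in the original flip/mutation proof, the genuinely new input being uniform across all the cases.

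The key point is how $\omega(\tau)$ meets $k$. Travelling once around the puncture one crosses $k$ exactly twice, once near each of its two ends $P_{1}$ and $P_{3}$, and at each crossing $\omega(\tau)$ enters $k$ along an arrow of one of the two triangles $\Delta_{1},\Delta_{2}$ bounded by $k$ and leaves $k$ along an arrow of the other. Writing $\Delta_{1}=(a,b,k)$ and $\Delta_{2}=(k,c,d)$, with arrows $\alpha_{1}\colon a\to b$, $\beta_{1}\colon b\to k$, $\gamma_{1}\colon k\to a$ of $\Delta_{1}$ and $\delta_{2}\colon c\to d$, $\beta_{2}\colon d\to k$, $\gamma_{2}\colon k\to c$ of $\Delta_{2}$, so that $S_{\Delta_{1}}=\alpha_{1}\beta_{1}\gamma_{1}$ and $S_{\Delta_{2}}=\delta_{2}\beta_{2}\gamma_{2}$, one obtains, up to a cyclic rotation,
\[
\omega(\tau)=\beta_{2}\gamma_{1}\,U\,\beta_{1}\gamma_{2}\,V,
\]
where $U$ is a path from $a$ to $b$ not passing through $k$ and containing the arrow $\alpha_{1}$ (coming from the corner of $\Delta_{1}$ at $P_{2}$), and $V$ is a path from $c$ to $d$ not passing through $k$ and containing the arrow $\delta_{2}$ (from the corner of $\Delta_{2}$ at $P_{4}$). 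Hence the only part of $S(\tau,x,n)$ that $\mu_{k}$ sees is $S_{\Delta_{1}}+S_{\Delta_{2}}+x\,(\beta_{2}\gamma_{1}\,U\,\beta_{1}\gamma_{2}\,V)^{n}$; the remaining triangle $3$-cycles and, outside $Q_{k}$, the rest of $\omega(\tau)$, survive $\mu_{k}$ unchanged.

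Next I would carry out the premutation $\widetilde{\mu}_{k}$ and the reduction. Premutation replaces each length-two path $\beta_{i}\gamma_{j}$ through $k$ by a new arrow $[\beta_{i}\gamma_{j}]$; thus $S_{\Delta_{1}},S_{\Delta_{2}}$ turn into the $2$-cycles $[\beta_{1}\gamma_{1}]\alpha_{1}$ and $[\beta_{2}\gamma_{2}]\delta_{2}$, the term $\omega(\tau)^{n}$ turns into $([\beta_{2}\gamma_{1}]\,U\,[\beta_{1}\gamma_{2}]\,V)^{n}$, and one adds $\sum_{i,j}[\beta_{i}\gamma_{j}]\gamma_{j}^{*}\beta_{i}^{*}$. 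Reduction then splits off the trivial QP carried by $[\beta_{1}\gamma_{1}]\alpha_{1}+[\beta_{2}\gamma_{2}]\delta_{2}$; unwinding the Derksen--Weyman--Zelevinsky right-equivalence, and using the cancellations built into it, its net effect on the surviving terms is to substitute $\alpha_{1}\mapsto-\gamma_{1}^{*}\beta_{1}^{*}$ and $\delta_{2}\mapsto-\gamma_{2}^{*}\beta_{2}^{*}$. On the term $\sum_{i,j}[\beta_{i}\gamma_{j}]\gamma_{j}^{*}\beta_{i}^{*}$ this leaves precisely the two $3$-cycles $[\beta_{2}\gamma_{1}]\gamma_{1}^{*}\beta_{2}^{*}$ and $[\beta_{1}\gamma_{2}]\gamma_{2}^{*}\beta_{1}^{*}$, which are the $3$-cycles $S_{\Delta_{1}^{*}},S_{\Delta_{2}^{*}}$ of the two triangles of the flipped quadrilateral --- exactly as in the plain flip/mutation computation. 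On the extra term, premutation has turned the two $k$-crossing factors $\beta_{2}\gamma_{1},\beta_{1}\gamma_{2}$ of $\omega(\tau)$ into the single arrows $[\beta_{2}\gamma_{1}],[\beta_{1}\gamma_{2}]$, which are exactly the arrows along which $\omega(\sigma)$ runs through the (now un-split) corners of $\sigma$ at $P_{1},P_{3}$, while reduction has turned the arrows $\alpha_{1},\delta_{2}$ of $\omega(\tau)$ into the length-two paths $-\gamma_{1}^{*}\beta_{1}^{*},-\gamma_{2}^{*}\beta_{2}^{*}$ through the new diagonal $k^{*}$, which are exactly the length-two paths along which $\omega(\sigma)$ runs through $k^{*}$ at $P_{2},P_{4}$; since $\alpha_{1}$ and $\delta_{2}$ each occur once per copy of $\omega$, the two minus signs cancel and $x\,\omega(\tau)^{n}$ is carried to $x\,\omega(\sigma)^{n}$. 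Assembling the pieces, the reduced QP is $(Q(\sigma),S(\sigma,x,n))$ up to right-equivalence, which is the assertion.

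The step I expect to be the main obstacle is the bookkeeping of the reduction, for two reasons. First, since $\alpha_{1}$ and $\delta_{2}$ occur inside the long word $\omega(\tau)^{n}$, the reducing right-equivalence is genuinely constructed by iteration rather than by the naive substitution above, and one must check that the higher corrections it produces cancel against the cubic terms $[\beta_{1}\gamma_{1}]\gamma_{1}^{*}\beta_{1}^{*}$ and $[\beta_{2}\gamma_{2}]\gamma_{2}^{*}\beta_{2}^{*}$; this should follow from the fact that each of $\alpha_{1},\delta_{2}$ occurs in $\omega(\tau)$ exactly once and in a single controlled position, so that $\widetilde{\mu}_{k}(S(\tau,x,n))$ has no cyclic words of length $\leq 2$ other than the two intended $2$-cycles and the completion-theoretic convergence is transparent. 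Second, in the degenerate configurations --- some of $a,b,c,d$ identified, the extreme case being a triangulation of the once-punctured torus, where $a=c$, $b=d$ and $Q(\tau)$ is the Markov quiver --- the composite arrows and the $2$-cycles of $\widetilde{\mu}_{k}(S(\tau,x,n))$ may share vertices or become loops; one then has to re-verify, case by case, both the quiver-level identification of the reduced quiver with $Q(\sigma)$ and that the various right-equivalences involved (in particular any rescaling of an arrow by $-1$) do not disturb the cyclic word $\omega(\sigma)^{n}$. Once these checks are in place the theorem follows.
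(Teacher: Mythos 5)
Your strategy matches the paper's: premutate, isolate the two $2$-cycles $a_1[b_1c_1]+a_2[b_2c_2]$ and their cubic companions $c_1^*b_1^*[b_1c_1]+c_2^*b_2^*[b_2c_2]$, reduce via explicit right-equivalences, and observe that the double sign change turns $x\,\Gsymbol(p)^n$ into $x\,\Gsymbol_\sigma(p)^n$. Your prediction of the outcome is also correct: writing $\Gsymbol(p)=a_1Aa_2B$, the reduced potential is $\Tsigma+x(c_1^*b_1^*[A]c_2^*b_2^*[B])^n=\Ssigmaxn$.

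The step you flag as ``the main obstacle'' is indeed the whole content of the proof, and it is not resolved in the paper by iterating the DWZ limit process but by writing down closed-form automorphisms $\varphi_1,\ldots,\varphi_4$. Your description of the bookkeeping, however, misattributes what cancels what. The automorphism $\varphi_1\colon a_1\mapsto a_1-c_1^*b_1^*$ kills the cubic term $c_1^*b_1^*[b_1c_1]$ outright (against the $2$-cycle $a_1[b_1c_1]$), and it is the \emph{side effect} of $\varphi_1$ on the $x$-term --- the cross terms in the expansion of $x\bigl((a_1-c_1^*b_1^*)[A]a_2[B]\bigr)^n$ that still contain $a_1$ --- that need to be disposed of. These do not cancel against the cubics (which are already gone); they are absorbed into the $2$-cycle $a_1[b_1c_1]$ by a second automorphism
\[
\varphi_2([b_1c_1])=[b_1c_1]-x\sum_{j=0}^{n-1}(-1)^j[A]a_2[B]\bigl((a_1-c_1^*b_1^*)[A]a_2[B]\bigr)^{n-j-1}\bigl(c_1^*b_1^*[A]a_2[B]\bigr)^j,
\]
a telescoping sum engineered precisely so that $a_1\,\varphi_2([b_1c_1])$ cancels them. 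Repeating with $\varphi_3,\varphi_4$ for $a_2$ and $[b_2c_2]$ produces the answer. This telescoping formula is the genuinely new ingredient compared with the $n=1$ case; without exhibiting it (or an equivalent), the claim that ``higher corrections cancel'' is not proved.

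A smaller remark: you anticipate a case analysis over the identification patterns of the quadrilateral (Markov quiver, etc.), echoing Labardini-Fragoso's original flip/mutation argument. The proof in the paper is uniform in the six arrows $a_i,b_i,c_i$ and never appeals to absence of double arrows or loops near $k$, so no such case analysis is required once the right-equivalences are written arrow-by-arrow rather than arc-by-arc.
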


That the quivers associated to triangulations of once-punctured closed surfaces of positive genus admit more than one weak-right-equivalence class of non-degenerate potentials has always been expected, since the works \cite{DWZ1} of Derksen-Weyman-Zelevinsky and \cite{Labardini1,Labardini2} of Labardini exhibit non-degenerate potentials for the Markov quiver\footnote{The Markov quiver arises as the quiver associated to any triangulation of the once-punctured torus.} that are not weakly right-equivalent. 

In \cite[Theorem 8.4]{GLFS}, Geiss-Labardini-Schr\"{o}er proved that every quiver associated to some triangulation of a positive-genus closed surface with at least three punctures admits exactly one weak-right-equivalence class of non-degenerate potentials, and conjectured that the same result holds in the case of two punctures. The reason why their proof fails for twice-punctured closed surface is that these do not admit triangulations all of whose arcs connect distinct punctures. The fact that such triangulations do exist for closed surfaces with at least three punctures plays an essential role in the proof of \cite[Theorem 8.4]{GLFS}.

The structure of the paper is straightforward: in Section \ref{sec:preliminaries} we prove a few facts (some of them quite technical) about the form of cycles and non-degenerate potentials for quivers arising from combinatorially nice triangulations of surfaces with empty boundary (see conditions \eqref{eq:valency-at-least-4} and \eqref{eq:no-double-arrows}). Section \ref{sec:once-punctured-surfaces} is devoted to proving part \eqref{item:main-thm-one-puncture} of Theorem \ref{main-thm-intro}, whereas Section \ref{sec:twice-punctured-surfaces} is devoted to showing part \eqref{item:main-thm-two-punctures}.
  \section{Preliminaries}

\label{sec:preliminaries}

Let $\field$ be any field. For a quiver $Q$, the \emph{vertex span} is the $\field$-algebra $R$ defined as the $\field$-vector space with basis $\{e_j\suchthat j\in Q_0\}$, with multiplication defined as the $\field$-bilinear extension of the rule
$$
e_ie_j:=\delta_{i,j}e_j \hspace{1cm} \text{for all $i,j\in Q_0$},
$$
where $\delta_{i,j}\in\field$ is the \emph{Kronecker delta} of $i$ and $j$. Thus, $R$ is (a $\field$-algebra isomorphic to) $K^{Q_0}$ with both sum and multiplication defined componentwise.
The \emph{complete path algebra} of $Q$ is the $\field$-vector space
$$
\RA{Q} := \prod_{\ell\in\mathbb{Z}_{\geq 0}} A^{(\ell)},
$$
where $A^{(0)}:=R$, and for $\ell>0$, $A^{(\ell)}$ is the $\field$-vector space with basis all the paths of length $\ell$ on $Q$. The multiplication of $\RA{Q}$ is defined in terms of the concatenation of paths.

The vertex span $R$ is obviously a subring of $\RA{Q}$ (actually, a $\field$-subalgebra), but it is often not a central subring. Despite this, any ring automorphism $\varphi:\RA{Q}\rightarrow\RA{Q}$ such that $\varphi|_{R}=\myid_{R}$ will be said to be an \emph{$R$-algebra automorphism} of $\RA{Q}$.

\begin{defi} Let $Q$ be a quiver and $S,W\in \RA{Q}$ be potentials on $Q$. We will say that:
\begin{enumerate}
\item two cycles $a_1\cdots a_\ell$ and $b_1\cdots b_m$ on $Q$ are \emph{rotationally equivalent} if $a_1\cdots a_\ell=b_1\cdots b_m$ or $a_1\cdots a_\ell = b_k\cdots b_m b_1\cdots b_{k-1}$ for some $k\in\{2,\ldots,m\}$;
\item $S$ and $W$ are \emph{rotationally disjoint} if no cycle appearing in $S$ is rotationally equivalent to a cycle appearing in $W$;
\item $S$ and $W$ are \emph{cyclically equivalent} if, with respect to the $\mathfrak{m}$-adic topology of $\RA{Q}$, the element $S-W$ belongs to the topological closure of the vector subspace of $\RA{Q}$ spanned by all elements of the form $a_1\cdots a_\ell-a_2\cdots a_\ell a_1$ with $a_1\cdots a_\ell$ running through the set of all cycles on $Q$; notation: $S\sim_{\operatorname{cyc}}W$;
\item $S$ and $W$ are \emph{right-equivalent} if there exists a \emph{right equivalence} from $S$ to $W$, i. e., an $R$-algebra automorphism $\varphi:\RA{Q}\rightarrow\RA{Q}$ that acts as the identity on the set of idempotents $\{e_j\suchthat j\in Q_0\}$ and satisfies $\varphi(S)\sim_{\operatorname{cyc}}W$; notation: $S\sim_{\operatorname{r.e.}}W$;
\item $S$ and $W$ are \emph{weakly right-equivalent} if $S$ and $\lambda W$ are right-equivalent for some non-zero scalar $\lambda\in\field$.
\end{enumerate}
\end{defi}

Throughout the paper, $\surf$ will be a punctured closed surface of positive genus. That is, $\Sigma$ will be a compact, connected, oriented two-dimensional real differentiable manifold with positive genus and empty boundary, and $\mathbb{M}$ will be a non-empty finite subset of $\Sigma$.

It is very easy to show that there exists at least one triangulation $\tau$ of $(\Sigma,\mathbb{M})$ such that
\begin{eqnarray}
\label{eq:valency-at-least-4}
&&\text{Every puncture has valency at least $4$ with respect to $\tau$;}\\
\label{eq:no-double-arrows}
&&\text{for any two arcs $i$ and $j$ of $\tau$, the quiver $\Qtau$ has at most one arrow from $j$ to $i$.}
\end{eqnarray}
Throughout the paper, we will permanently suppose that $\tau$ satisfies \eqref{eq:valency-at-least-4} and \eqref{eq:no-double-arrows}.

Following Ladkani \cite{Ladkani} we define two maps $f,g:\Qtau_1\rightarrow\Qtau_1$ as follows. Each triangle $\triangle$ of $\tau$ gives rise to an oriented 3-cycle $\alpha_\triangle\beta_\triangle\gamma_\triangle$ on $\Qtau$. We set $f(\alpha_\triangle)=\gamma_\triangle$, $f(\beta_\triangle)=\alpha_\triangle$ and $f(\gamma_\triangle)=\beta_\triangle$. Now, given any arrow $\alpha$ of $\Qtau$, the quiver $\Qtau$ has exactly two arrows starting at the terminal vertex of $\alpha$. One of these two arrows is $f(\alpha)$. We define $g(\alpha)$ to be the other arrow.

Note that the map $f$ (resp. $g$) splits the arrow set of $\Qtau$ into $f$-orbits (resp. $g$-orbits). The set of $f$-orbits is in one-to-one correspondence with the set of triangles of $\tau$. All $f$-orbits have exactly three elements. The set of $g$-orbits is in one-to-one correspondence with the set of punctures of $(\Sigma,\mathbb{M})$. For every arrow $\alpha$ of $\Qtau$, we denote by $m_\alpha$ the size of the $g$-orbit of $\alpha$ ($m_\alpha \geq 4$ by \eqref{eq:valency-at-least-4}). Note that $g^{m_\alpha-1}(\alpha)g^{m_\alpha-2}(\alpha)\cdots g(\alpha)\alpha$ is a cycle surrounding the puncture $p$ corresponding to the $g$-orbit of $\alpha$, we denote this cycle as $\Gsymbol (\alpha)$ or $\Gsymbol(p)$. Whereas, for every arrow $\beta$ of $\Qtau$ and any non-negative integer $r$, we use the notation $G(r,\beta)$ to denote the path $g^{r-1}(\beta)g^{r-2}(\beta)\cdots g(\beta)\beta$. Similarly, we use the notation $F(r,\beta)$ to denote the path $f^{r-1}(\beta)f^{r-2}(\beta)\cdots f(\beta)\beta$.

Let $\mathbf{x}=(x_p)_{p\in\mathbb{M}}$ be a choice of a non-zero scalar $x_p\in\field$ for each puncture $p\in\mathbb{M}$. For ideal triangulations which satisfy \eqref{eq:valency-at-least-4} and \eqref{eq:no-double-arrows} the potential $\Staux$ defined by the second author \cite{Labardini4} takes a simple form, namely,
$$
\Staux = \Ttau + \sum_{p\in \mathbb{P}}x_p\Gsymbol(p),
$$
with $ \Ttau \sim_{\operatorname{cyc}} \sum_{\alpha\in \Gamma}(f^2(\alpha)f(\alpha)\alpha)$ for any fixed set $\Gamma$ containing exactly one arrow from each triangle of $\tau$.

\begin{lemma}[Types of cycles]\label{lemma:f-g-fg-potentials} Let $\surf$ be a punctured surface with empty boundary, and let $\tau$ be a triangulation of $\surf$ that satisfies \eqref{eq:valency-at-least-4} and \eqref{eq:no-double-arrows}. Then every cycle in $Q(\tau)$ is rotationally equivalent to a cycle of one of the following types:
\begin{itemize}
\item[($f$-cycles)] $(f^2(\alpha)f(\alpha)\alpha)^n$ for some $n\geq 1$;
\item[($g$-cycles)] $(g^{m_\beta-1}(\beta)g^{m_\beta-2}(\beta)\cdots g(\beta)\beta)^n$ for some $n\geq 1$;
\item[($fg$-cycles)] $f^2(a)f(a)\lambda$ for some arrow $a$ and some path $\lambda$, such that $\lambda=g^{-1}f(a)\lambda'$ with $\lambda'$ of positive length.
\end{itemize}

\begin{proof}
Let $\xi = \alpha_1\cdots\alpha_r$ be any cycle on $\Qtau$. Denote $\alpha_{r+1}=\alpha_1$, and notice that for every $\ell =  1, \cdots , r$, we have either $\alpha_\ell = f(\alpha_{\ell +1})$ or $\alpha_\ell = g(\alpha_{\ell +1})$. Let $\mathbf{s}_\xi$ be the length-$r$ sequence of $f$s and $g$s that has an $f$ at the $\ell^{\operatorname{th}}$ place if $\alpha_\ell = f(\alpha_{\ell +1})$ and a $g$ otherwise.

If $\mathbf{s}_\xi$ consists only of $f$s, then $\xi$ is rotationally equivalent to $(f^2(\alpha)f(\alpha)\alpha)^n$ for some arrow $\alpha$ and some $n\geq 1$. Furthemore, if $\mathbf{s}_\xi$ consists only of $g$s, then $\xi$ is rotationally equivalent to $(g^{m_\beta-1}(\beta)g^{m_\beta-2}(\beta)\cdots g(\beta)\beta)^n$ for some arrow $\beta$ and some $n\geq 1$. Therefore, if $\mathbf{s}_\xi$ involves only $f$s or only $g$s, then $\xi$ is an $f$-cycle or a $g$-cycle.

Suppose that at least one $f$ and at least one $g$ appear in $\mathbf{s}_\xi$. Rotating $\xi$ if necessary, we can assume that $\mathbf{s}_\xi$ starts with an $f$ followed by a $g$, i.e., $\mathbf{s}_\xi=(f,g,\cdots)$. This means that if we set $a:=f^{-1}(\alpha_2)$, then $\alpha_1=f^2(a)$, $\alpha_2=f(a)$ and $\alpha_3=g^{-1}f(a)$. By \eqref{eq:no-double-arrows} $a$ is the only arrow in $\Qtau_1$ such that $\alpha_1\alpha_2a$ is a cycle. Since $\alpha_3=g^{-1}f(a)\neq a$, this implies $\xi=f^2(a)f(a)g^{-1}f(a)\lambda'$ with $\lambda'$ of positive length.
\end{proof}
\end{lemma}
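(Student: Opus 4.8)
The plan is to translate each cycle into a finite word in the symbols $f$ and $g$, recording at each step which of the (only two) arrows leaving a given vertex is taken, and then run a short case analysis on that word; the three cases in the statement correspond exactly to the word being all $f$'s, all $g$'s, or mixed.

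First I would fix a cycle $\xi=\alpha_1\cdots\alpha_r$ and use the defining local feature of $Q(\tau)$: at the terminal vertex of an arrow there are precisely two outgoing arrows, one being its image under $f$ and the other its image under $g$. Reading $\xi$ cyclically backwards (with $\alpha_{r+1}=\alpha_1$), each $\alpha_\ell$ is therefore $f(\alpha_{\ell+1})$ or $g(\alpha_{\ell+1})$, which yields a length-$r$ word $\mathbf{s}_\xi$ over $\{f,g\}$, well defined up to the cyclic rotation induced by rotating $\xi$. If $\mathbf{s}_\xi$ is constant equal to $f$, then $\alpha_\ell=f(\alpha_{\ell+1})$ for all $\ell$; since $f$ has order $3$ on each of its orbits, $r$ is a multiple of $3$ and $\xi$ is rotationally equivalent to $(f^2(\alpha)f(\alpha)\alpha)^n$. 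Symmetrically, if $\mathbf{s}_\xi$ is constant equal to $g$, the order of $g$ on the relevant orbit is $m_\beta$, so $r$ is a multiple of $m_\beta$ and $\xi$ is a $g$-cycle.

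For the mixed case I would note that if both letters occur then some $f$ is immediately followed by a $g$ in the cyclic word, so after a rotation I may assume $\mathbf{s}_\xi$ starts with $f$ followed by $g$. Setting $a:=f^{-1}(\alpha_2)$ this pins down $\alpha_1=f^2(a)$, $\alpha_2=f(a)$, $\alpha_3=g^{-1}f(a)$, and the only thing left is to check $r\geq 4$. This is where condition \eqref{eq:no-double-arrows} enters, which I expect to be the one genuinely non-formal point: it forces $a$ to be the unique arrow $b$ for which $f^2(a)f(a)b$ is a closed cycle (a second such $b$ would start and end at the same two vertices as $a$, hence be parallel to it), so $r=3$ would require $\alpha_3=a$; but $\alpha_3=g^{-1}f(a)\neq a$ because $f(a)\neq g(a)$ by the very definition of $g$. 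Hence $\xi=f^2(a)f(a)g^{-1}f(a)\lambda'$ with $\lambda'$ of positive length, an $fg$-cycle, and the three cases are exhaustive. Apart from this use of \eqref{eq:no-double-arrows}, the argument is pure bookkeeping with the finite-order maps $f$ and $g$; the only care needed is checking that $\mathbf{s}_\xi$ is well defined and rotation-compatible, and that a constant word really does collapse to the displayed power.
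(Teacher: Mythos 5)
Your proof follows essentially the same route as the paper: encode the cycle as a cyclic word over $\{f,g\}$, split into the all-$f$, all-$g$, and mixed cases, and in the mixed case rotate so an $f$ is followed by a $g$, then use condition \eqref{eq:no-double-arrows} together with $g(a)\neq f(a)$ to rule out $r=3$. The small additions you make (noting that $f$ has order $3$ and $g$ has order $m_\beta$ to see the constant words collapse to the stated powers, and spelling out why two arrows closing $f^2(a)f(a)(\cdot)$ would be parallel) are correct and only make explicit what the paper leaves implicit.
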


\begin{remark}
As in the case of cycles, every path falls within exactly one of three types of paths: $f$-paths, $g$-paths, and $fg$-paths.
\end{remark}

By Lemma \ref{lemma:f-g-fg-potentials}, up to cyclical equivalence we can write every potential $S$ in $\Qtau$ as $S=S_f+S_g+S_{fg}$, where
\begin{eqnarray*}
S_f & = & \sum_{\triangle} \sum_{n=1}^{\infty } z_{\triangle,n}(f^2(\alpha_\triangle)f(\alpha_\triangle)\alpha_\triangle)^n,\\
S_g & = & \sum_{p\in \mathbb{P}}\sum_{n=1}^\infty \nu_{p,n}(\Gsymbol (p))^n,\\
S_{fg} & = & \sum_{a\in \Qtau_1 }f^2(a)f(a)\omega_a,
\end{eqnarray*}
with each $z_{\triangle,n},\nu_{p,n} \in K$, and $\omega_a$ a possibly infinite linear combination of paths of the form $g^{-1}f(a)\lambda'$ for each $a \in Q(\tau)$.

\begin{lemma}\label{lemma:non-degenerate-potentials-contains-Ttau} Let $\surf$ be a punctured surface with empty boundary, and let $\tau$ be a triangulation of $\surf$ that satisfies \eqref{eq:valency-at-least-4} and \eqref{eq:no-double-arrows}. Every non-degenerate potential $S$ on $\Qtau$ is right-equivalent to a potential of the form $\Ttau+U$ for some $U$ rotationally disjoint from $\Ttau$.
\end{lemma}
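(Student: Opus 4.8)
\emph{Overall plan.} I would split $S$ into its degree-$3$ part and a remainder of higher degree, show that non-degeneracy forces the degree-$3$ part to become $\Ttau$ after merely rescaling arrows, and observe that everything of higher degree is automatically rotationally disjoint from $\Ttau$.

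First I would determine which cycles can occur in degree $3$. By Lemma~\ref{lemma:f-g-fg-potentials}, every cycle on $\Qtau$ is rotationally equivalent to an $f$-, a $g$-, or an $fg$-cycle; a $g$-cycle has length $n\cdot m_\beta\ge 4$ by \eqref{eq:valency-at-least-4}, an $fg$-cycle has length $\ge 4$, and an $f$-cycle $(f^2(\alpha)f(\alpha)\alpha)^n$ has length $3n$, which equals $3$ only for $n=1$. Hence the only length-$3$ cycles on $\Qtau$ are the triangle cycles $f^2(\alpha_\triangle)f(\alpha_\triangle)\alpha_\triangle$, one per triangle $\triangle$ of $\tau$, and these are pairwise non-rotationally-equivalent because distinct $f$-orbits are disjoint. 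Since $\Qtau$ is $2$-acyclic, $S$ is a (possibly infinite) combination of cycles of length $\ge 3$, so separating off the length-$3$ part (equivalently, reading off the decomposition $S\sim_{\operatorname{cyc}}S_f+S_g+S_{fg}$ above) gives $S\sim_{\operatorname{cyc}}\sum_{\triangle}z_\triangle\,f^2(\alpha_\triangle)f(\alpha_\triangle)\alpha_\triangle+S'$ with $z_\triangle\in\field$ and $S'$ a $\field$-linear combination of cycles of length $\ge 4$.

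The key step --- and the one I expect to be the main obstacle --- is to prove that $z_\triangle\ne 0$ for every triangle $\triangle$; this is the only place non-degeneracy is used. Suppose $z_{\triangle_0}=0$ and write the triangle cycle of $\triangle_0$ as $abc$ with $a\colon u\to v$, $b\colon v\to w$, $c\colon w\to u$, the vertices $u,v,w$ being pairwise distinct since $\Qtau$ has no loops. I would mutate at $v$. Since $\Qtau$ is $2$-acyclic and satisfies \eqref{eq:no-double-arrows}, $c$ is the unique arrow $w\to u$ and there is no arrow $u\to w$ in $\Qtau$; consequently in $\mu_v(\Qtau)$ the composite arrow $[ab]\colon u\to w$ is the unique arrow $u\to w$, $c$ is still the unique arrow $w\to u$, and therefore $\{c,[ab]\}$ is the only $2$-cycle on $\{u,w\}$ in $\mu_v(\Qtau)$ while neither $c$ nor $[ab]$ lies in any other $2$-cycle. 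But the coefficient of the $2$-cycle $c[ab]$ in $\mu_v(S)$ equals the coefficient of $abc$ in $S$, namely $z_{\triangle_0}=0$, so $c$ and $[ab]$ do not appear in the degree-$2$ part of $\mu_v(S)$ at all. A standard application of the Splitting Theorem of \cite{DWZ1} then shows that $c$ and $[ab]$ survive in the reduction of $\mu_v(\Qtau,S)$, which is therefore not $2$-acyclic --- contradicting non-degeneracy of $S$. (If a statement of this type is available in the literature, one may quote it instead; the bookkeeping with the mutated quiver and its reduction is what makes this step delicate.)

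Finally, granting $z_\triangle\ne 0$ for all $\triangle$, I would conclude by rescaling arrows. Because distinct triangles of $\tau$ use disjoint sets of arrows, there is a well-defined $R$-algebra automorphism $\varphi$ of $\RA{\Qtau}$ fixing every idempotent, sending $\alpha_\triangle\mapsto z_\triangle^{-1}\alpha_\triangle$ for each triangle $\triangle$, and fixing every other arrow. Then $z_\triangle\,\varphi\bigl(f^2(\alpha_\triangle)f(\alpha_\triangle)\alpha_\triangle\bigr)=f^2(\alpha_\triangle)f(\alpha_\triangle)\alpha_\triangle$, so $\varphi(S)\sim_{\operatorname{cyc}}\Ttau+U$ with $U:=\varphi(S')$; and since $\varphi$ merely rescales arrows it preserves the length of every path, so $U$ is still a combination of cycles of length $\ge 4$. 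As every cycle occurring in $\Ttau$ has length $3$, no cycle of $U$ can be rotationally equivalent to a cycle of $\Ttau$, i.e.\ $U$ is rotationally disjoint from $\Ttau$. Hence $S\sim_{\operatorname{r.e.}}\Ttau+U$, as asserted.
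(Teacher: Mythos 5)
Your proof is correct and shares the paper's endgame (rescale the triangle arrows once one knows that every triangle $3$-cycle appears with a non-zero coefficient), but you obtain the key non-vanishing fact by a different route: the paper simply invokes \cite[Corollary 2.5]{GLFS}, checking its hypotheses via \eqref{eq:no-double-arrows}, whereas you reprove this fact from scratch by a one-step mutation argument --- assume $z_{\triangle_0}=0$, mutate at a vertex $v$ of $\triangle_0$, use \eqref{eq:no-double-arrows} and the $2$-acyclicity of $\Qtau$ to see that $c[ab]$ is the unique $2$-cycle touching $c$ or $[ab]$ in the premutated quiver $\widetilde{\mu}_v(\Qtau)$, observe that its coefficient in $\widetilde{\mu}_v(S)$ is $z_{\triangle_0}=0$, and conclude via the Splitting Theorem that both $c$ and $[ab]$ survive into the reduced part, so that $\mu_v(\Qtau,S)$ fails to be $2$-acyclic. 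This is, in substance, the argument underlying the cited GLFS result, so your route is no shorter than the paper's, but it has the merit of being self-contained. If you wrote this up you would want to make two small points explicit: (i) you write $\mu_v(\Qtau)$ in places where you mean the premutation $\widetilde{\mu}_v(\Qtau)$; (ii) to justify that the coefficient of $c[ab]$ in $\widetilde{\mu}_v(S)$ is exactly $z_{\triangle_0}$, note that no longer cycle of $S$ can collapse onto $c[ab]$ under the $[\,\cdot\,]$-replacement, since a length-$d$ cycle passing $k$ times through $v$ becomes a length-$(d-k)$ cycle after premutation, so $d-k=2$ forces $d\le 4$, and the $d=4$, $k=2$ case produces a $2$-cycle made of two bracket arrows, not $c[ab]$.
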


\begin{proof}
By \eqref{eq:no-double-arrows} the hypotheses of \cite[Corollary 2.5]{GLFS} are satisfied, so if $S$ is a non-degenerate potential, then every $f$-cycle $f^2(\alpha)f(\alpha)\alpha$ appears in $S$. So,
\begin{align*}
S&\sim_{\operatorname{cyc}}\sum_{\triangle} z_{\triangle,1}f^2(\alpha_\triangle)f(\alpha_\triangle)\alpha_\triangle + U',
\end{align*}
with all $z_{\triangle,1}\neq 0$ and $U'$ rotationally disjoint from $\Ttau$.

We define an $R$-algebra automorphism $\varphi:\RA{\Qtau}\rightarrow\RA{\Qtau}$ by means of the rule
\begin{equation*}
\varphi(\alpha_\triangle) = \frac{1}{z_{\triangle,1}}\alpha_\triangle.
\end{equation*}
We see that $\varphi(S)\sim_{\operatorname{cyc}}\Ttau + U$, for some potential $U$ rotationally disjoint from $\Ttau$.
\end{proof}

\begin{lemma}[Replacing $f$-potentials and $fg$-potentials by longer ones]\label{lemma:Af-Afg-inequalities} Let $\surf$ be a punctured surface with empty boundary, and let $\tau$ be a triangulation of $\surf$ that satisfies \eqref{eq:valency-at-least-4} and \eqref{eq:no-double-arrows}. Let $\phi$ be one of the symbols $f$ and $fg$, and let $\nu$ be the other symbol, so that $\{\phi,\nu\}=\{f,fg\}$ as sets of symbols. If $W,A\in\RA{\Qtau}$ are potentials rotationally disjoint from $\Ttau$, and if $A_\phi\neq 0$, then there exists a potential $B\in\RA{\Qtau}$ which is rotationally disjoint from $\Ttau$ and satisfies the following four conditions:
\begin{eqnarray*}
\short(B_\phi)&>&\short(A_\phi);\\
\short(B_g)&\geq&\min(\short(A_g),\short(A_\phi)+1);\\
\short(B_{\nu})&\geq&\min(\short(A_{\nu}),\short(A_\phi)+1);\\
(\Qtau,\Ttau+W+A)&\sim_{\operatorname{r.e.}}&(\Qtau,\Ttau+W+B).
\end{eqnarray*}
\end{lemma}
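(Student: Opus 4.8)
The plan is to absorb the shortest $\phi$-cycles occurring in $A$ into $\Ttau$ by means of a unitriangular change of variables, and to observe that every other contribution produced along the way only becomes longer. Set $d:=\short(A_\phi)$ and let $A_\phi^{(d)}$ be the length-$d$ homogeneous component of $A_\phi$; note $A_\phi^{(d)}\neq 0$. By Lemma~\ref{lemma:f-g-fg-potentials} together with \eqref{eq:valency-at-least-4}, the only oriented cycles of $\Qtau$ of length $\leq 3$ are the $3$-cycles $f^2(\alpha)f(\alpha)\alpha$; hence, being rotationally disjoint from $\Ttau$, the potentials $W$ and $A$ satisfy $\short(W)\geq 4$ and $\short(A)\geq 4$, and furthermore $\short(A_f)\geq 6$. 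In particular $d\geq 4$, with $d\geq 6$ when $\phi=f$. I would then construct an $R$-algebra automorphism $\varphi$ of $\RA{\Qtau}$ that fixes the idempotents, equals the identity modulo $\mathfrak{m}^{2}$, and satisfies $\varphi(\Ttau)\sim_{\operatorname{cyc}}\Ttau-A_\phi^{(d)}+E_\Ttau$ with $\short(E_\Ttau)\geq d+1$, and take $B$ to be the forward image of the remaining terms.

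To build $\varphi$: each cycle $\mu$ of length $d$ occurring in $A_\phi$ is a $\phi$-cycle, so by Lemma~\ref{lemma:f-g-fg-potentials} (after fixing one rotation) it has the form $\mu=f^2(a_\mu)f(a_\mu)\rho_\mu$ for an arrow $a_\mu$ of $\Qtau$ and a path $\rho_\mu$ parallel to $a_\mu$ of length $d-2\geq 2$; concretely $\rho_\mu=g^{-1}f(a_\mu)\lambda_\mu'$ with $\lambda_\mu'$ of length $d-3$ when $\phi=fg$, and $\mu=\big(f^2(a_\mu)f(a_\mu)a_\mu\big)^{d/3}$ with $\rho_\mu=a_\mu\big(f^2(a_\mu)f(a_\mu)a_\mu\big)^{d/3-1}$ when $\phi=f$. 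For each arrow $a$ of $\Qtau$ set $\sigma_a:=\sum c_\mu\rho_\mu$, the sum running over the finitely many chosen $\mu$ with $a_\mu=a$ and $c_\mu\in\field$ the coefficient of $\mu$ in $A$, so that $\sigma_a$ is a finite $\field$-combination of paths of length $d-2$ parallel to $a$ and $\sum_a f^2(a)f(a)\sigma_a\sim_{\operatorname{cyc}}A_\phi^{(d)}$. Since every $\sigma_a\in\mathfrak{m}^{2}$, the rule $\varphi(a):=a-\sigma_a$ (for each arrow $a$) determines a well-defined $R$-algebra automorphism $\varphi$ of $\RA{\Qtau}$ fixing every $e_j$.

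Next I would compute $\varphi(\Ttau+W+A)$ up to cyclic equivalence. Writing $\Ttau$, up to $\sim_{\operatorname{cyc}}$, as the sum of the $3$-cycles of the triangles of $\tau$ — each $3$-cycle expressed so as to start at one of its modified arrows whenever it has any — one expands $\varphi$ of each such $3$-cycle $f^2(a)f(a)a$ as $\big(f^2(a)-\sigma_{f^2(a)}\big)\big(f(a)-\sigma_{f(a)}\big)\big(a-\sigma_a\big)$. The term with no substitution reproduces the $3$-cycle; the three single-substitution terms are, cyclically, $-f^2(a)f(a)\sigma_a$, $-f^2(a)\sigma_{f(a)}a$ and $-\sigma_{f^2(a)}f(a)a$, which — using $f^{k+2}(a)f^{k+1}(a)=f^{2}(f^{k}(a))f(f^{k}(a))$ with indices mod $3$ and the defining form of $\rho_\mu$ — equal $-\sum c_\mu\mu$ over the chosen $\mu$ whose distinguished arrow is $a$, $f(a)$, $f^2(a)$ respectively; and the remaining terms use $\geq 2$ substitutions, so have length $\geq 2d-3\geq d+1$. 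Summing over all triangles (each arrow lies in exactly one) gives $\varphi(\Ttau)\sim_{\operatorname{cyc}}\Ttau-A_\phi^{(d)}+E_\Ttau$ with $\short(E_\Ttau)\geq d+1$; in fact $E_\Ttau=0$ when $\phi=f$, since then every triangle carries at most one modified arrow. On the other hand, substituting any one occurrence of a modified arrow inside a cycle raises its length by at least $(d-2)-1=d-3$, so $\varphi(W)=W+E_W$ and $\varphi(A)=A+E_A$ with $\short(E_W)\geq\short(W)+d-3\geq d+1$ and $\short(E_A)\geq\short(A)+d-3\geq d+1$. Setting
$$
B:=A-A_\phi^{(d)}+E_\Ttau+E_W+E_A,
$$
one gets $\varphi(\Ttau+W+A)\sim_{\operatorname{cyc}}\Ttau+W+B$, so $\varphi$ witnesses $(\Qtau,\Ttau+W+A)\sim_{\operatorname{r.e.}}(\Qtau,\Ttau+W+B)$, the fourth of the four required conditions.

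Finally I would read off the remaining three conditions from the decomposition $B=B_f+B_g+B_{fg}$. Since $A_\phi^{(d)}$ is $\phi$-homogeneous of length $d$ and $E_\Ttau+E_W+E_A$ involves only cycles of length $\geq d+1$, we have $B_\phi=(A_\phi-A_\phi^{(d)})+(\text{length }\geq d+1)$, whence $\short(B_\phi)\geq d+1>d=\short(A_\phi)$; likewise $B_g=A_g+(\text{length }\geq d+1)$ and $B_\nu=A_\nu+(\text{length }\geq d+1)$, whence $\short(B_g)\geq\min(\short(A_g),\short(A_\phi)+1)$ and $\short(B_\nu)\geq\min(\short(A_\nu),\short(A_\phi)+1)$. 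For rotational disjointness: every cycle occurring in $A-A_\phi^{(d)}$ occurs in $A$, hence is not rotationally equivalent to any $3$-cycle of $\Ttau$, while every cycle occurring in $E_\Ttau+E_W+E_A$ has length $\geq d+1\geq 5>3$, so $B$ is rotationally disjoint from $\Ttau$. The step I expect to be the main obstacle is the identity $\varphi(\Ttau)\sim_{\operatorname{cyc}}\Ttau-A_\phi^{(d)}+E_\Ttau$ when a triangle carries two or three modified arrows: one must verify that the single-substitution terms reorganize \emph{exactly}, up to cyclic equivalence and up to cycles of length $\geq d+1$, into $-A_\phi^{(d)}$, with no stray length-$d$ contribution of $g$- or $\nu$-type surviving. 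Everything else is routine length bookkeeping.
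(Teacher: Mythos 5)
Your proof is correct and takes essentially the same route as the paper: you substitute $a\mapsto a-\sigma_a$ with $\sigma_a$ read off from $A_\phi$, reorganize the single-substitution terms cyclically into $-A_\phi^{(d)}$, and do the same length bookkeeping on the cross-terms and on $\varphi(W+A)-(W+A)$, using $\short(W),\short(A)\geq 4$ and $d\geq 4$. The only (harmless, cosmetic) deviation is that you absorb only the lowest-degree homogeneous piece $A_\phi^{(d)}$, whereas the paper's substitution absorbs all of $A_\phi$ at once (with $\omega_a$ a possibly infinite combination in the $\phi=fg$ case, and with the substitution made only at the arrows $\alpha_\triangle$ in the $\phi=f$ case); both yield the same depth $\short(A_\phi)-3$ and the same four conclusions.
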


\begin{proof} Let us deal with the case $\phi=f$. Write
\begin{equation*}
A_f=\sum_{\triangle}\sum_{n\geq \frac{\short(A_f)}{3}}z_{\triangle,n}\left(f^2(\alpha_\triangle)f(\alpha_\triangle)\alpha_\triangle\right)^n
\end{equation*}
and define an $R$-algebra homomorphism $\varphi:\RA{\Qtau}\rightarrow\RA{\Qtau}$ by means of the rule
\begin{equation*}
\varphi(\alpha_\triangle) = \alpha_\triangle-\sum_{n\geq \frac{\short(A_f)}{3}}z_{\triangle,n}\alpha_\triangle\left(f^2(\alpha_\triangle)f(\alpha_\triangle)\alpha_\triangle\right)^{n-1}.
\end{equation*}
Then $\varphi$ is a unitriangular automorphism of depth $\short(A_f)-3$, and
\begin{equation*}
\varphi(\Ttau+W+A)=\Ttau-A_f+W+A+(\varphi(W+A)-(W+A)).
\end{equation*}
Consequently, if we set $B=A_g+A_{fg}+(\varphi(W+A)-(W+A))$, then:
\begin{itemize}
\item $\varphi(\Ttau+W+A)=\Ttau+W+B$;
\item $\short(\varphi(W+A)-(W+A))\geq \depth(\varphi)+\short(W+A)\geq \short(A_f)-3+4=\short(A_f)+1$;
\item $\short(B_f)=\short((\varphi(W+A)-(W+A))_{f})\geq \short(A_f)+1$;
\item $\short(B_g)\geq\min(\short(A_g),\short((\varphi(W+A)-(W+A))_g)\geq \min(\short(A_g),\short(A_f)+1)$; and
\item $\short(B_{fg})\geq\min(\short(A_{fg}),\short((\varphi(W+A)-(W+A))_{fg}))\geq \min(\short(A_{fg}),\short(A_f)+1)$.
\end{itemize}

Now we deal with the case $\phi=fg$. Write
\begin{equation*}
A_{fg}=\sum_{a\in Q(\tau)_1}f^2(a)f(a)\omega_a,
\end{equation*}
with $\omega_a\in e_{h(a)}\RA{\Qtau}e_{t(a)}$ for each $a\in\Qtau_1$
and define an $R$-algebra homomorphism $\varphi:\RA{\Qtau}\rightarrow\RA{\Qtau}$ by means of the rule
$
\varphi(a) = a-\omega_a$
for $a\in \Qtau_1$. Then $\varphi$ is a unitriangular automorphism of depth $\short(A_{fg})-3$, and
\begin{eqnarray*}
\varphi(\Ttau+W+A)&=&
\sum_{\triangle}\left(f^2(\alpha_\triangle)-\omega_{f^2(\alpha_\triangle)}\right)\left(f(\alpha_\triangle)-\omega_{f(\alpha_\triangle)}\right)\left(\alpha_\triangle-\omega_{\alpha_\triangle}\right)\\
&&
+W+A+\left(\varphi(W+A)-(W+A)\right)\\
&\sim_{\operatorname{cyc}}&
\Ttau+W+A_f+A_g+\left(\varphi(W+A)-(W+A)\right)
\\
&&
+
\sum_{a\in\Qtau_1}f^2(a)\omega_{f(a)}\omega_a
-\sum_{\triangle}\omega_{f^2(a_\triangle)}\omega_{f(a_\triangle)}\omega_{a_\triangle}.
\end{eqnarray*}
Consequently, if we set $B=A_f+A_g+\left(\varphi(W+A)-(W+A)\right)
+\sum_{a\in\Qtau_1}f^2(a)\omega_{f(a)}\omega_a
-\sum_{\triangle}\omega_{f^2(a_\triangle)}\omega_{f(a_\triangle)}\omega_{a_\triangle}$, then:
\begin{itemize}
\item $\varphi(\Ttau+W+A)\sim_{\operatorname{cyc}}\Ttau+W+B$;
\item $\short\left(\varphi(W+A)-(W+A)\right)\geq \depth(\varphi)+\short(W+A)\geq
\short(A_{fg})-3+4=\short(A_{fg})+1$;
\item $\short\left(\sum_{a\in\Qtau_1}f^2(a)\omega_{f(a)}\omega_a\right)\geq 2\short (A_{fg})-3\geq \short (A_{fg})+4-3=\short (A_{fg})+1$ (since $\short(A_{fg})\geq 4$);
\item $\short\left(\sum_{\triangle}\omega_{f^2(a_\triangle)}\omega_{f(a_\triangle)}\omega_{a_\triangle}\right)\geq 3\short (A_{fg})-6\geq \short (A_{fg}) +8 -6\geq \short (A_{fg})+1$;
\item $\short(B_{fg})\geq\min\left(\short\left(\varphi(W+A)-(W+A)\right),\right.$ 
$\left.\short\left(\sum_{a\in\Qtau_1}f^2(a)\omega_{f(a)}\omega_a\right),\right.$\\ 
$\left.\short\left(\sum_{\triangle}\omega_{f^2(a_\triangle)}\omega_{f(a_\triangle)}\omega_{a_\triangle}\right)\right)
\geq \short(A_{fg}) +1$;
\item $\short(B_g)\geq\min\left(\short(A_g),\right.$ $\short\left(\varphi(W+A)-(W+A)\right),$ $\short\left(\sum_{a\in\Qtau_1}f^2(a)\omega_{f(a)}\omega_a\right),$\\ $\left.\short\left(\sum_{\triangle}\omega_{f^2(a_\triangle)}\omega_{f(a_\triangle)}\omega_{a_\triangle}\right)\right)\geq\min\left(\short(A_g),\short(A_{fg})+1\right)$; and
\item $\short(B_{f})\geq \min\left(\short(A_g),\right.$ $\short\left(\varphi(W+A)-(W+A)\right),$ $\short\left(\sum_{a\in\Qtau_1}f^2(a)\omega_{f(a)}\omega_a\right),$\\ $\left.\short\left(\sum_{\triangle}\omega_{f^2(a_\triangle)}\omega_{f(a_\triangle)}\omega_{a_\triangle}\right)\right) \geq \min\left(\short(A_f),\short(A_{fg})+1\right)$.
\end{itemize}

Lemma \ref{lemma:Af-Afg-inequalities} is proved.
\end{proof}

\begin{prop}[Replacing potentials by sums of powers of $g$-cycles]\label{prop:replacing-with-sums-of-powers-of-g-cycles} Let $\surf$ be a punctured surface with empty boundary, and let $\tau$ be a triangulation of $\surf$ satisfying \eqref{eq:valency-at-least-4} and \eqref{eq:no-double-arrows}. If $U,Z\in\RA{\Qtau}$ are potentials rotationally disjoint from $\Ttau$, then there exist a unitriangular automorphism $\varphi:\RA{\Qtau}\rightarrow\RA{\Qtau}$ of depth at least $\short(U)-3$ and a potential $W\in\RA{\Qtau}$ involving only positive powers of $g$-cycles, such that $\short(W)\geq\short(U)$ and $\varphi$ is a right-equivalence $(\Qtau,\Ttau+Z+U)\rightarrow(\Qtau,\Ttau+Z+W)$.
\end{prop}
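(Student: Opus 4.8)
The plan is to iterate Lemma~\ref{lemma:Af-Afg-inequalities}, alternately applying it with $\phi=f$ and with $\phi=fg$, so as to drive the short lengths of the $f$-part and the $fg$-part to infinity while keeping the short length of the $g$-part bounded below by $\short(U)$; the automorphism $\varphi$ and the potential $W$ will then be produced as $\maxid$-adic limits. Put $d:=\short(U)$; since $U$ is rotationally disjoint from $\Ttau$ one has $d\geq 4$, and we may assume $U\neq 0$, the case $U=0$ being trivial.

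First I would build, by induction on $m\geq 0$, potentials $A^{(m)}\in\RA{\Qtau}$ rotationally disjoint from $\Ttau$ with $A^{(0)}=U$, and, for $m\geq 1$, unitriangular automorphisms $\psi_m$ of $\RA{\Qtau}$ of depth $\geq d-3$ that are right-equivalences $(\Qtau,\Ttau+Z+A^{(m-1)})\to(\Qtau,\Ttau+Z+A^{(m)})$, maintaining
$$
\short(A^{(m)}_f)\geq d+m,\qquad \short(A^{(m)}_{fg})\geq d+m,\qquad \short(A^{(m)}_g)\geq d,
$$
and, for $m\geq 1$, $\short\bigl(A^{(m)}_g-A^{(m-1)}_g\bigr)\geq d+m$. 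The base case is immediate since $A^{(0)}_f,A^{(0)}_{fg},A^{(0)}_g$ are summands of $U$. For the inductive step I would run one \emph{round} consisting of (at most) two applications of Lemma~\ref{lemma:Af-Afg-inequalities}, in both of which the fixed potential is $W:=Z$ and the variable potential $A$ is the current approximant. First take $\phi=f$ (when the current $f$-part is nonzero): this strictly increases $\short(A_f)$, hence raises it from $\geq d+m$ to $\geq d+m+1$, and by the last two displayed inequalities of the lemma it keeps $\short(A_g)\geq d$ and $\short(A_{fg})\geq d+m$. Then take $\phi=fg$ (when the $fg$-part of the outcome is nonzero): this raises $\short(A_{fg})$ to $\geq d+m+1$ and, by the lemma's inequality for $\nu=f$, simultaneously forces the short length of the $f$-part to stay $\geq d+m+1$, while again keeping $\short(A_g)\geq d$. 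Reading off the proof of Lemma~\ref{lemma:Af-Afg-inequalities}, each of the two automorphisms in round $m$ has depth equal to the short length of the part it eliminates minus $3$, hence $\geq d-3$, and every new term it creates has short length $\geq d+m+1$ (using $\short(Z+A)\geq 4$); composing them gives $\psi_{m+1}$, which is unitriangular of depth $\geq d-3$ by the usual group property, and the new $g$-part differs from the old one only in degrees $\geq d+m+1$. Taking $A^{(m+1)}$ to be the outcome of the round closes the induction.

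Then I would pass to the limit. Because the depths of all automorphisms used tend to $\infty$, the infinite composition $\varphi:=\cdots\circ\psi_2\circ\psi_1$ converges $\maxid$-adically to a unitriangular automorphism of $\RA{\Qtau}$ of depth $\geq d-3=\short(U)-3$. Because $\short(A^{(m)}_f),\short(A^{(m)}_{fg})\to\infty$, one has $A^{(m)}_f\to 0$ and $A^{(m)}_{fg}\to 0$; and the bound $\short\bigl(A^{(m)}_g-A^{(m-1)}_g\bigr)\geq d+m$ makes $(A^{(m)}_g)_{m}$ Cauchy, with limit a potential $W$ that involves only positive powers of $g$-cycles — the space of such potentials is $\maxid$-adically closed — and that satisfies $\short(W)\geq\short(U)$. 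Hence $A^{(m)}\to W$, and taking limits in the right-equivalences $\psi_m\circ\cdots\circ\psi_1\colon(\Qtau,\Ttau+Z+U)\to(\Qtau,\Ttau+Z+A^{(m)})$, using continuity of composition and of cyclical equivalence in the $\maxid$-adic topology as in \cite{DWZ1}, shows $\varphi$ is a right-equivalence $(\Qtau,\Ttau+Z+U)\to(\Qtau,\Ttau+Z+W)$.

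The subtle point — and the one I would treat as the main obstacle — is the bookkeeping that makes a single round compatible with all three invariants at once: one must check that eliminating the $f$-part does not drag $\short(A_{fg})$ (or $\short(A_g)$) below the level it must retain, and, symmetrically, that eliminating the $fg$-part preserves the gain already made for the $f$-part. This is precisely why the two eliminations must be \emph{interleaved} rather than carried out one after the other: removing either of the $f$- and $fg$-parts reintroduces contributions of the other type, albeit of strictly larger short length, so only by alternating can both short lengths be made to increase in lockstep. The concluding limit argument is routine but not vacuous, resting on the convergence of an infinite composite of unitriangular automorphisms of unbounded depth and on the stability of right-equivalence under $\maxid$-adic limits.
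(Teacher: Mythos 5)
Your proposal is correct and is essentially the paper's own argument: iterate Lemma~\ref{lemma:Af-Afg-inequalities} to push the short lengths of the $f$- and $fg$-parts to infinity while accumulating $g$-terms, then pass to the $\maxid$-adic limit of the composed unitriangular automorphisms. The only difference worth recording is the iteration schedule: you run a fixed round $\phi=f$ then $\phi=fg$ and maintain the symmetric invariant $\short(A^{(m)}_f),\short(A^{(m)}_{fg})\geq \short(U)+m$, whereas the paper's Claim~\ref{claim:seqs-Un-Wn-varphin} at each step eliminates whichever of the $f$- and $fg$-parts currently has the smaller short length, and then establishes the two-step increase $\short(U_{n+1})\geq\short(U_{n-1})+1$ by a short case analysis on whether the chosen symbol changes. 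Your fixed alternation sidesteps that case distinction and so is marginally cleaner to state, but the two schedules are interchangeable and rest on exactly the same estimates from Lemma~\ref{lemma:Af-Afg-inequalities}.
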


\begin{proof} Set $W_0=U_g$ and $U_0=U-U_g$. We obviously have $\short(U_0),\short(W_0)\geq\short(U)$.

\begin{claim}\label{claim:seqs-Un-Wn-varphin} There exist sequences $(U_n)_{n\geq 1}$ and $(W_n)_{n\geq 1}$ of potentials on the quiver $\Qtau$, and a sequence $(\varphi_n)_{n\geq 1}$ of unitriangular automorphisms of $\RA{\Qtau}$, such that the following properties are satisfied for every $n\geq 1$:
\begin{itemize}
\item $\varphi_n$ is a right-equivalence $(\Qtau,\Ttau+Z+U_{n-1}+W_{n-1})\rightarrow(\Qtau,\Ttau+Z+U_n+W_n)$;
\item $\depth(\varphi_n)=\short(U_{n-1})-3$;
\item each of $U_n$ and $W_n$ is rotationally disjoint from $\Ttau$, $U_n$ does not involve powers of $g$-cycles and $W_n$ involves only powers of $g$-cycles;
\item $\short(W_n-W_{n-1})\geq\short(U_{n-1})+1$;
\item $\short(U_{n+1})\geq  \short(U_{n-1})+1$.
\end{itemize}
\end{claim}

\begin{proof}[Proof of Claim \ref{claim:seqs-Un-Wn-varphin}]
We shall produce the three sequences $(U_n)_{n\geq 1}$, $(W_n)_{n\geq 1}$ and $(\varphi_n)_{n\geq 1}$ recursively. Fix a positive integer $n$. If $U_{n-1}=0$, we set $U_n$ to be $U_{n-1}$, $W_n$ to be $W_{n-1}$ and $\varphi_n$ to be the identity of $\RA{\Qtau}$. Otherwise, let $\phi_{n-1},\nu_{n-1}\in\{f,fg\}$ be symbols such that $\{\phi_{n-1},\nu_{n-1}\}=\{f,fg\}$ and $\short((U_{n-1})_{\phi_{n-1}})\leq\short((U_{n-1})_{\nu_{n-1}})$. By the proof of Lemma \ref{lemma:Af-Afg-inequalities}, there exist a potential $V_n\in\RA{\Qtau}$ rotationally disjoint from $\Ttau$ and a unitriangular automorphism $\varphi_n:\RA{\Qtau}\rightarrow\RA{\Qtau}$ such that
\begin{itemize}
\item $\depth(\varphi_n)=\short(U_{n-1})-3$;
\item $\varphi_n$ is a right-equivalence $(\Qtau,\Ttau+Z+W_{n-1}+U_{n-1})\rightarrow(\Qtau,\Ttau+Z+W_{n-1}+V_n)$;
\item $\short((V_n)_{\phi_{n-1}})>\short((U_{n-1})_{\phi_{n-1}})$;
\item $\short((V_n)_g)\geq\min(\short((U_{n-1})_{g}),\short((U_{n-1})_{\phi_{n-1}})+1)=\short((U_{n-1})_{\phi_{n-1}})+1$;
\item $\short((V_n)_{\nu_{n-1}})\geq\min(\short((U_{n-1})_{\nu_{n-1}}),\short((U_{n-1})_{\phi_{n-1}})+1)$.
\end{itemize}
We set $U_n=V_n-(V_n)_g$ and $W_n=W_{n-1}+(V_n)_g$. It is clear that the first four properties stated in the claim are satisfied. For the fifth property, note that if $\phi_{n}=\phi_{n-1}$, then $\short((U_{n})_{\phi_{n}})>\short((U_{n-1})_{\phi_{n-1}})$, whereas if $\phi_{n}\neq\phi_{n-1}$, then
$$
\short((U_{n+1})_{\phi_{n}})>\short((U_{n})_{\phi_{n}})\geq\short(U_{n-1}) \ \ \ \text{and}
$$
$$
\short((U_{n+1})_{\phi_{n-1}})\geq
\min(\short((U_{n})_{\phi_{n-1}}),\short((U_{n})_{\phi_{n}})+1)\geq
$$
$$
\min(\short((U_{n-1})_{\phi_{n-1}})+1,\min(\short((U_{n-1})_{\nu_{n-1}}),\short((U_{n-1})_{\phi_{n-1}})+1)+1)
>\short((U_{n-1})_{\phi_{n-1}}).
$$
These facts, together with the observation that for each $n\geq 0$ we have $\short((U_n)_{\phi_n})=\short(U_n)$, allow us to deduce that $\short(U_{n+1})\geq \short(U_{n-1})+1$ for all $n\geq 1$.
\end{proof}

From the claim, we see that
$$\lim_{n\to\infty}\short(U_n)=\infty , ~\lim_{n\to\infty}\short(W_n-W_{n-1})=\infty\mbox{ and }\lim_{n\to\infty}\depth(\varphi_n)=\infty .$$ 
Hence, if we set $W=\lim_{n\to\infty}W_n$, then $\varphi:=\lim_{n\to\infty}\varphi_n\circ\ldots\circ\varphi_1$ is a right-equivalence $(\Qtau,\Ttau+Z+U)\rightarrow(\Qtau,\Ttau+Z+W)$. Proposition \ref{prop:replacing-with-sums-of-powers-of-g-cycles} follows.
\end{proof}

\begin{lemma}\label{lemma:very-technical-lemma} Let $\surf$ be a punctured surface with empty boundary, let $\tau$ be a triangulation of $\surf$ satisfying \eqref{eq:valency-at-least-4} and \eqref{eq:no-double-arrows}, and let $\xx=(x_p)_{p\in\marked}$ be any choice of non-zero scalars. Suppose that $m$ and $t$ are positive integers and $U,W\in\RA{\Qtau}$ are potentials rotationally disjoint from $\Staux$ that satisfy the following properties:
\begin{enumerate}
\item $\short(U)\geq m$;
\item $2\short(W)-3>m$;
\item $W=\lambda f(a)aG(t,g^{-t}(a))c$ for some non-zero scalar $\lambda\in K$, some arrow $a$ and some path $c$.
\end{enumerate}
Then there exists a unitriangular $R$-algebra automorphism $\zeta:\RA{\Qtau}\rightarrow\RA{\Qtau}$ of depth $\short(W)-3$ that serves as a right-equivalence between the QPs $(\Qtau,\Staux+U+W)$ and $(\Qtau,\Staux+U+U'+W')$ for some potentials $U',W'\in\RA{\Qtau}$ that satisfy:
\begin{enumerate}
\item\label{item:a-very-technical} $\short(U')>m$;
\item\label{item:b-very-technical} $\short(W')>\short(W)$;
\item\label{item:c-very-technical} $W'=\lambda'f(b)bG(t-1,g^{-(t-1)}(b))c'$ for some non-zero scalar $\lambda'$, some arrow $b$ and some path $c'$.
\end{enumerate}
\end{lemma}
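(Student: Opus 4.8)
The plan is to take for $\zeta$ the $R$-algebra homomorphism $\zeta\colon\RA{\Qtau}\to\RA{\Qtau}$ defined by $\zeta(f^2(a))=f^2(a)-\lambda\,G(t,g^{-t}(a))\,c$ and $\zeta(\alpha)=\alpha$ for every arrow $\alpha\neq f^2(a)$. Since $W=\lambda f(a)\,a\,G(t,g^{-t}(a))\,c$ is a cycle, the path $G(t,g^{-t}(a))\,c$ runs from the head of $f(a)$ to the tail of $a$, i.e.\ it has the same source and target as $f^2(a)$; hence $\zeta$ really is an $R$-algebra homomorphism, and since $\short\!\bigl(G(t,g^{-t}(a))\,c\bigr)=t+\short(c)=\short(W)-2$, it is a unitriangular automorphism of depth $\short(W)-3$, as the statement demands. (If $\short(W)=3$ then $t=1$ and $c$ is trivial and $\zeta$ degenerates to a change of arrows; the computation below goes through verbatim, so I assume $\short(W)\geq4$.)

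Next I would apply $\zeta$ to $\Staux+U+W$. Since right-equivalences respect cyclic equivalence, I may replace $\Staux$ by the representative $\sum_{\triangle}f^2(\alpha_\triangle)f(\alpha_\triangle)\alpha_\triangle+\sum_p x_p\Gsymbol(p)$ in which $a$ is chosen to represent its triangle and $\Gsymbol(p'')=g^{m''-1}(f^2(a))\cdots g(f^2(a))\cdot f^2(a)$, where $p''$ is the puncture whose $g$-orbit contains $f^2(a)$ and $m''=m_{f^2(a)}\geq4$ by \eqref{eq:valency-at-least-4}. In this representative the arrow $f^2(a)$ occurs exactly twice, once in the $3$-cycle $f^2(a)f(a)a$ and once in $\Gsymbol(p'')$, so $\zeta$ fixes every other summand and
$$
\zeta(\Staux)\ \sim_{\operatorname{cyc}}\ \Staux-\lambda\,G(t,g^{-t}(a))\,c\,f(a)\,a-\lambda x_{p''}\bigl(g^{m''-1}(f^2(a))\cdots g(f^2(a))\bigr)G(t,g^{-t}(a))\,c.
$$
The first correction is cyclically equivalent to $-\lambda f(a)\,a\,G(t,g^{-t}(a))\,c=-W$.

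The key step is to recognise the second correction, up to cyclic equivalence, as a potential of the shape required in \eqref{item:c-very-technical}. For this I use the identity $g(f^2(a))=f(g^{-1}(a))$: the two arrows issuing from the vertex $h(f^2(a))$ are $f(f^2(a))=f^3(a)=a$ and $g(f^2(a))$, and since $h(f^2(a))=t(a)=h(g^{-1}(a))$ they are also $f(g^{-1}(a))$ and $g(g^{-1}(a))=a$; comparing gives $g(f^2(a))=f(g^{-1}(a))$. Writing $b:=g^{-1}(a)$, one also has $G(t,g^{-t}(a))=g^{-1}(a)g^{-2}(a)\cdots g^{-t}(a)=b\cdot G(t-1,g^{-(t-1)}(b))$, so the second correction equals
$$
-\lambda x_{p''}\bigl(g^{m''-1}(f^2(a))\cdots g^{2}(f^2(a))\bigr)f(b)\,b\,G(t-1,g^{-(t-1)}(b))\,c\ \sim_{\operatorname{cyc}}\ \lambda' f(b)\,b\,G(t-1,g^{-(t-1)}(b))\,c',
$$
with $\lambda':=-\lambda x_{p''}\neq0$ and $c':=c\cdot g^{m''-1}(f^2(a))\cdots g^{2}(f^2(a))$, so $\short(c')=\short(c)+m''-2$. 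Calling this potential $W'$, property \eqref{item:c-very-technical} holds by construction, and $\short(W')=t+\short(c)+m''-1=\short(W)+m''-3>\short(W)$, which is \eqref{item:b-very-technical}.

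To conclude I would set $U':=(\zeta(U)-U)+(\zeta(W)-W)$. By the standard estimate $\short\!\bigl(\zeta(P)-P\bigr)\geq\depth(\zeta)+\short(P)$, each of these potentials is either $0$ or has $\short$ at least $\short(W)-3+m>m$, respectively $2\short(W)-3>m$, using the hypotheses $\short(U)\geq m$ and $2\short(W)-3>m$; hence $\short(U')>m$, giving \eqref{item:a-very-technical}. Collecting the identities above,
$$
\zeta(\Staux+U+W)\ \sim_{\operatorname{cyc}}\ \bigl(\Staux-W+W'\bigr)+\bigl(U+(\zeta(U)-U)\bigr)+\bigl(W+(\zeta(W)-W)\bigr)\ =\ \Staux+U+U'+W',
$$
so $\zeta$ is the desired right-equivalence. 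The one genuinely non-routine point is the identification carried out in the third paragraph — that pushing the substitution of $f^2(a)$ through the $g$-cycle $\Gsymbol(p'')$ produces again a potential of the prescribed form, with $G(t,\cdot)$ shortened to $G(t-1,\cdot)$ and $c$ lengthened by the path $g^{m''-1}(f^2(a))\cdots g^{2}(f^2(a))$; everything else is the $\short$/$\depth$ bookkeeping already used in Lemma~\ref{lemma:Af-Afg-inequalities} and Proposition~\ref{prop:replacing-with-sums-of-powers-of-g-cycles}, together with the non-degeneracies granted by \eqref{eq:valency-at-least-4} and \eqref{eq:no-double-arrows} (the arrows $f^2(a)$, $f(a)$, $a$ are pairwise distinct, $g^{-1}(a)\neq f^2(a)$, and the $m''$ arrows of a $g$-orbit are pairwise distinct). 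The possibility that $f^2(a)$ lies in the same $g$-orbit as $a$ causes no extra difficulty.
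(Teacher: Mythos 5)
Your proof is correct and coincides with the paper's: you take the same substitution $\zeta(f^2(a))=f^2(a)-\lambda\,G(t,g^{-t}(a))\,c$ (the paper writes $f^{-1}(a)$ for $f^2(a)$), set $U'=\zeta(U+W)-(U+W)$, and identify the correction coming from $x_{p''}\Gsymbol(p'')$ as $W'$ after the same rotation and the identity $g f^{-1}(a)=f g^{-1}(a)$, with identical $\lambda'$, $b$, $c'$ and the same $\short$/$\depth$ estimates. The only point you flag informally ("I assume $\short(W)\geq 4$") is exactly what the paper justifies via Lemma~\ref{lemma:f-g-fg-potentials} and \eqref{eq:no-double-arrows}.
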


\begin{proof}
Let $\zeta:\RA{\Qtau}\rightarrow\RA{\Qtau}$ be the $R$-algebra homomorphism given by the rule
$$
\zeta(f^{-1}(a))=f^{-1}(a)-\lambda G(t,g^{-t}(a))c.
$$
Since $\tau$ satisfies \eqref{eq:no-double-arrows}, $\short(W)-3$ is a positive integer by Lemma \ref{lemma:f-g-fg-potentials}, and hence $\zeta$ is actually a unitriangular automorphism of $\RA{\Qtau}$. The depth of $\zeta$ is obviously $\short(W)-3$.

The arrow $f^{-1}(a)$ connects two arcs of $\tau$. Let $p_{f^{-1}(a)}$ be the puncture at which these arcs are incident. Direct computation shows that
\begin{eqnarray*}
\zeta(\Staux+U+W) &\sim_{\operatorname{cyc}}&
\Staux
-W
-\lambda x_{p_{f^{-1}(a)}}G(m_{f^{-1}(a)}-1,gf^{-1}(a))G(t,g^{-t}(a))c\\
&&
+U+W+\left(\zeta(U+W)-(U+W)\right)\\
&=&
\Staux
-\lambda x_{p_{f^{-1}(a)}}G(m_{f^{-1}(a)}-2,g^2f^{-1}(a))gf^{-1}(a)g^{-1}(a)G(t-1,g^{-t}(a))c\\
&&
+U+\left(\zeta(U+W)-(U+W)\right)\\
&\sim_{\operatorname{cyc}}&
\Staux
-\lambda x_{p_{f^{-1}(a)}}gf^{-1}(a)g^{-1}(a)G(t-1,g^{-t}a)cG(m_{f^{-1}(a)}-2,g^2f^{-1}(a))\\
&&
+U+\left(\zeta(U+W)-(U+W)\right).
\end{eqnarray*}
So, the lemma follows if we remember that $gf^{-1}(a)=fg^{-1}(a)$ and set
\begin{eqnarray*}
U'&:=&\zeta(U+W)-(U+W),\\ 
\lambda'&:=&-\lambda x_{p_{f^{-1}(a)}},\\ 
b&:=&g^{-1}(a),\\ 
c'&:=&cG(m_{f^{-1}(a)}-2,g^2f^{-1}(a))\\
\text{and} \ W'&:=&\lambda'f(b)bG(t-1,g^{-(t-1)}(b))c'.
\end{eqnarray*}
Indeed, property \eqref{item:c-very-technical} is obviously satisfied, whereas the inequalities $\short(U)\geq m$, $\depth(\zeta)>0$ and $2\short(W)-3>m$ imply that
\begin{eqnarray*}
\short(U')&\geq &\min(\short(\zeta(U)-U),\short(\zeta(W)-W))\\
&\geq &\min(\depth(\zeta)+\short(U),\depth(\zeta)+\short(W))\\
&=& \min(\depth(\zeta)+\short(U),2\short(W)-3)\\
&>& m.
\end{eqnarray*}
Furthermore, we also have
$$
\short(W')=m_{f^{-1}(a)}-2+\short(W)-1>\short(W),
$$
where the inequality follows from the fact that $\tau$ satisfies \eqref{eq:valency-at-least-4}.
\end{proof}

\begin{coro}[Replacing certain cycles by sums of long $g$-cycles]\label{coro:essential} Under the same hypotheses of Lemma \ref{lemma:very-technical-lemma}, if the path $c$ is assumed to be an arrow, then there exists a unitriangular $R$-algebra automorphism $\Pi:\RA{\Qtau}\rightarrow\RA{\Qtau}$ of depth at least $\min(m-3,\short(W)-3)$ that serves as a right-equivalence between the QPs $(\Qtau,\Staux+U+W)$ and $(\Qtau,\Staux+U+\xi)$ for some potential $\xi$ that involves only positive powers of $g$-cycles  and satisfies $\short(\xi)>m$.
\end{coro}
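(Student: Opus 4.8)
The plan is to iterate Lemma~\ref{lemma:very-technical-lemma} until the short-degree of the ``$W$-part'' exceeds $m$, and then to replace everything that has accrued to $U$ by a potential involving only positive powers of $g$-cycles by invoking Proposition~\ref{prop:replacing-with-sums-of-powers-of-g-cycles}, after splitting $\Staux=\Ttau+\sum_{p\in\marked}x_p\Gsymbol(p)$.

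Set $W_0:=W$. I would construct recursively potentials $U_1,U_2,\dots$ and $W_1,W_2,\dots$, and unitriangular automorphisms $\zeta_1,\zeta_2,\dots$, so that $\zeta_i\circ\dots\circ\zeta_1$ is a right-equivalence $(\Qtau,\Staux+U+W)\to(\Qtau,\Staux+U+U_1+\dots+U_i+W_i)$, with $\short(U_j)>m$, with the numbers $\short(W_0)\leq\short(W_1)\leq\cdots$ strictly increasing, with $\depth(\zeta_j)=\short(W_{j-1})-3$, and with $W_i$ of the shape required in hypothesis~(3) of Lemma~\ref{lemma:very-technical-lemma} with parameter $t-i$. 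As long as $\short(W_{i-1})\leq m$, I apply Lemma~\ref{lemma:very-technical-lemma} to the first potential $U+U_1+\dots+U_{i-1}$ (of short-degree $\geq m$, since $\short(U)\geq m$ and each $\short(U_j)>m$), the second potential $W_{i-1}$, the same $m$, and parameter $t-(i-1)$. Hypothesis~(2) holds because $2\short(W_{i-1})-3\geq 2\short(W_0)-3>m$, and the key point that the parameter $t-(i-1)$ is still positive is forced by $2\short(W)-3>m$: since $\short(W)$ increases by at least one at every step, $\short(W_{i-1})\leq m$ gives $i-1\leq m-\short(W_0)$, hence $t-(i-1)\geq(\short(W_0)-3)-(m-\short(W_0))=2\short(W_0)-3-m>0$. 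As the $\short(W_i)$ strictly increase, after finitely many steps --- say $N$, with $N=0$ if $\short(W_0)>m$ already --- one reaches $\short(W_N)>m$. Put $\widetilde U:=U_1+\dots+U_N$ (so $\short(\widetilde U)>m$, and $\widetilde U=0$ if $N=0$) and $\Xi:=W_N$ (so $\short(\Xi)>m$); then $\zeta_N\circ\dots\circ\zeta_1$ is a right-equivalence $(\Qtau,\Staux+U+W)\to(\Qtau,\Staux+U+\widetilde U+\Xi)$.

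Next, $\Staux+U+\widetilde U+\Xi=\Ttau+Z+(\widetilde U+\Xi)$ with $Z:=\sum_{p\in\marked}x_p\Gsymbol(p)+U$. Both $Z$ and $\widetilde U+\Xi$ are rotationally disjoint from $\Ttau$, whose cycles are precisely the $3$-cycles $f^2(\alpha)f(\alpha)\alpha$: indeed $\Xi$ has length $>3$, each cycle of $\widetilde U$ has length $\geq 4$ (it arises from a cycle of length $\geq 3$ lengthened under some $\zeta_j$), $U$ is rotationally disjoint from $\Staux$ (in particular from $\Ttau$), and each $\Gsymbol(p)$ has length $m_p\geq 4$. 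Proposition~\ref{prop:replacing-with-sums-of-powers-of-g-cycles} then furnishes a unitriangular automorphism $\varphi$ of depth $\geq\short(\widetilde U+\Xi)-3>m-3$ and a potential $\xi$ involving only positive powers of $g$-cycles with $\short(\xi)\geq\short(\widetilde U+\Xi)>m$, such that $\varphi$ is a right-equivalence $(\Qtau,\Ttau+Z+\widetilde U+\Xi)\to(\Qtau,\Ttau+Z+\xi)=(\Qtau,\Staux+U+\xi)$. Taking $\Pi:=\varphi\circ\zeta_N\circ\dots\circ\zeta_1$ finishes it: its depth is at least $\min\bigl(\depth(\varphi),\short(W_0)-3\bigr)$, since $\depth(\zeta_1)=\short(W_0)-3$ is the smallest of the numbers $\depth(\zeta_i)=\short(W_{i-1})-3$, and since $\depth(\varphi)>m-3$ this is $\geq\min(m-3,\short(W)-3)$.

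The step that genuinely requires care is verifying, along the recursion, that the potentials handed to Lemma~\ref{lemma:very-technical-lemma} are rotationally disjoint from $\Staux$, which is among its hypotheses. For $W_{i-1}$ with $i\geq 2$ this is quick: it has the shape $\lambda f(b)b\cdots$, so its string of $f$'s and $g$'s (in the sense of the proof of Lemma~\ref{lemma:f-g-fg-potentials}) contains an $f$ --- whence no rotation of it is a $g$-cycle --- and its length is $>3$, so it is not an $f$-cycle; for $i=1$ it is a hypothesis. For $U+U_1+\dots+U_{i-1}$ one must show that each cycle occurring in some $U_j$ is rotationally disjoint from $\Staux$; the natural route is to carry the additional invariant that every cycle of $U_j$ has length $\geq 4$ and contains both an $f$-step and a $g$-step, so that it is an $fg$-cycle in the sense of Lemma~\ref{lemma:f-g-fg-potentials} and hence not among the cycles of $\Staux$, tracing with the identity $gf^{-1}=fg^{-1}$ how steps transform under the single-arrow replacement defining $\zeta_j$; alternatively, and more cleanly, one records the rotational disjointness of $U'$ and $W'$ from $\Staux$ directly in the statement of Lemma~\ref{lemma:very-technical-lemma}. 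The rest is routine bookkeeping with short-degrees and depths, all of it controlled by the inequality $2\short(W)-3>m$.
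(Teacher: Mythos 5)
Your argument follows the paper's proof essentially verbatim: iterate Lemma~\ref{lemma:very-technical-lemma} finitely many times and then invoke Proposition~\ref{prop:replacing-with-sums-of-powers-of-g-cycles}, composing the resulting unitriangular automorphisms. The only deviation is cosmetic --- the paper iterates exactly $t$ times so the parameter $t-i$ drops to $0$ and one reads off $\short(W_t)\geq\short(W)+t=2\short(W)-3>m$, whereas you stop as soon as $\short(W_N)>m$ (and correctly check $t-(i-1)>0$ remains available); and your closing remark about propagating rotational disjointness from $\Staux$ through the recursion flags a detail the paper handles only implicitly by asserting that the hypotheses of the lemma continue to be satisfied.
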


\begin{proof} This corollary follows from an inductive use of Lemma \ref{lemma:very-technical-lemma}.
Set $U_0=U$, $W_0=W$, $a_0=a$, $c_0=c$ and $\lambda_0=\lambda$. Using Lemma \ref{lemma:very-technical-lemma}, we obtain a unitriangular automorphism $\zeta_1:\RA{\Qtau}\rightarrow\RA{\Qtau}$, potentials $Z_1,W_1\in\RA{\Qtau}$, an arrow $a_1$, a path $c_1$ and a non-zero scalar $\lambda_1$, such that:
\begin{enumerate}
\item $\depth(\zeta_1)=\short(W_0)-3$;
\item $\zeta_1$ is a right-equivalence $(\Qtau,\Staux+U_0+W_0)\rightarrow(\Qtau,\Staux+U_0+Z_1+W_1)$;
\item $\short(Z_1)>m$ and $\short(W_1)\geq \short(W_0)+1$;
\item $W_1=\lambda_1f(a_1)a_1G(t-1,g^{-(t-1)}(a_1))c_1$.
\end{enumerate}
Setting $U_1=U_0+Z_1$, we see that $U_1$, $W_1$, $a_1$, $c_1$ and $\lambda_1$ satisfy the hypotheses of Lemma \ref{lemma:very-technical-lemma} for the integers $m$ and $t-1$.

Assuming that for $i\in\{0,\ldots,t-1\}$ we have $U_i$, $W_i$, $a_i$, $c_i$ and $\lambda_i$ satisfying the hypotheses of Lemma \ref{lemma:very-technical-lemma} for the integers $m$ and $t-i$, we can produce a unitriangular automorphism $\zeta_{i+1}:\RA{\Qtau}\rightarrow\RA{\Qtau}$, potentials $Z_{i+1},W_{i+1}\in\RA{\Qtau}$, an arrow $a_{i+1}$, a path $c_{i+1}$ and a non-zero scalar $\lambda_{i+1}$, such that:
\begin{enumerate}
\item $\depth(\zeta_{i+1})=\short(W_i)-3$;
\item $\zeta_{i+1}$ is a right-equivalence $(\Qtau,\Staux+U_i+W_i)\rightarrow(\Qtau,\Staux+U_i+Z_{i+1}+W_{i+1})$;
\item $\short(Z_{i+1})>m$ and $\short(W_{i+1})\geq \short(W_i)+1$;
\item $W_{i+1}=\lambda_{i+1}f(a_{i+1})a_{i+1}G(t-i-1,g^{-(t-i-1)}(a_{i+1}))c_{i+1}$.
\end{enumerate}
Setting $U_{i+1}=U_{i}+Z_{i+1}$, we see that $U_{i+1}$, $W_{i+1}$, $a_{i+1}$, $c_{i+1}$ and $\lambda_{i+1}$ satisfy the hypotheses of Lemma \ref{lemma:very-technical-lemma} for the integers $m$ and $t-(i+1)$.

The composition $\zeta=\zeta_t\circ\zeta_{t-1}\circ\ldots\circ\zeta_1$ is a unitriangular automorphism of $\RA{\Qtau}$ that has depth at least $\short(W)-3$ and serves as a right-equivalence
$(\Qtau,\Staux+U+W)\rightarrow(\Qtau,\Staux+U_{t}+W_{t})$. Notice that $U_t=U+\sum_{i=1}^tZ_i$, that $\short\left(\sum_{i=1}^tZ_i\right)>m$, and that $\short(W_t)\geq \short(W)+t=2\short(W)-3>m$. 

By Proposition \ref{prop:replacing-with-sums-of-powers-of-g-cycles}, there exists a unitriangular automorphism $\varphi:\RA{\Qtau}\rightarrow\RA{\Qtau}$ of depth greater than $m-3$ that makes $(\Qtau,\Staux+U+\sum_{i=1}^tZ_i+W_{t})$ right-equivalent to $(\Qtau,\Staux+U+\xi)$ for some potential $\xi\in\RA{\Qtau}$ that involves only powers of $g$-cycles and satisfies $\short(\xi)\geq\short\left(\sum_{i=1}^tZ_i+W_{t}\right)>m$.

From the two previous paragraphs we deduce that the automorphism $\Pi:=\varphi\circ\zeta$ satisfies the desired conclusion of Corollary \ref{coro:essential}.
\end{proof}

  \section{Once-punctured surfaces}

\label{sec:once-punctured-surfaces}
In \cite{Labardini1}  and \cite{Labardini4}, the second author showed that the potentials $\Staux$ are well behaved with respect to flips and mutations, in the sense that if two triangulations are related by a flip, then the associated QPs are related by the corresponding QP-mutation. In this section, we show that for once-punctured closed surfaces the same result is true for a wider class of potentials. Namely, given a triangulation $\tau$ of a once-punctured close surface of positive genus $(\Sigma,\mathbb{M})$, a scalar $x\neq 0$ and a positive integer $n$, we define a potential $\Stauxn$ as
\begin{align*}
\Stauxn = \Ttau + x\Gsymbol(p)^n,
\end{align*}
where $p$ is the only puncture  in $\surf$.

\begin{thm}\label{thm:flip->QP-mut-arbitrary-powers}
Let $\surf$ be a once-punctured closed surface of positive genus, $n$ be any positive integer, let $x\in K$ be any scalar. If $\tau$ and $\sigma$ are triangulations of $\surf$ that are related by the flip of an arc $k\in\tau$, then the QPs $\mu_k(\Qtau,\Stauxn)$ and $(\Qsigma,\Ssigmaxn)$ are right-equivalent.
\end{thm}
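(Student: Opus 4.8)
The plan is to compute $\mu_k(\Qtau,\Stauxn)$ explicitly and to recognize it as $(\Qsigma,\Ssigmaxn)$, running the computation in parallel with the proof of the case $n=1$ — the flip-compatibility of the potentials $S(-,\mathbf x)$ — established by the second author in \cite{Labardini1,Labardini4}, while keeping track of the extra power of the puncture cycle. Write $\Gsymbol_\tau(p)$, $\Gsymbol_\sigma(p)$ for the $g$-cycles around $p$ in $\Qtau$, $\Qsigma$, and let $[\,\cdot\,]$ denote the operation on cycles — each written so as not to begin at $k$ — that replaces every length-two passage through $k$ by the corresponding composite arrow. The premutation $\widetilde{\mu}_k$ is additive on potentials up to the fixed term $\Delta_k=\sum_{\alpha,\beta}[\beta\alpha]\alpha^{*}\beta^{*}$, and $\Stauxn=\Ttau+x\Gsymbol_\tau(p)^n$ differs from $\Staux=\Ttau+x\Gsymbol_\tau(p)$ only in the summand attached to $p$; hence $\widetilde{\mu}_k(\Qtau,\Stauxn)$ has the same quiver $\tilde Q$ as $\widetilde{\mu}_k(\Qtau,\Staux)$, and its potential is
$$
\widetilde W \;=\; W_1 \;+\; x\bigl([\Gsymbol_\tau(p)^n]-[\Gsymbol_\tau(p)]\bigr),
$$
where $W_1$ is the potential of $\widetilde{\mu}_k(\Qtau,\Staux)$.

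First I would isolate the relevant combinatorics. Since $\surf$ is a once-punctured closed surface, every arc of $\tau$ is a loop at $p$, the valency $m_\alpha$ of $p$ equals twice the number of arcs and is in particular at least $6$, and — by \eqref{eq:valency-at-least-4} — $\tau$ has no self-folded triangle; consequently the $g$-orbit of any arrow is all of $\Qtau_1$, so $\Gsymbol_\tau(p)$ traverses every arrow of $\Qtau$ exactly once and visits the vertex $k$ exactly twice, each time through one of the two ``cross'' composites joining the two triangles $\triangle_1,\triangle_2$ incident to $k$, whereas $[\Ttau]$ uses the two ``within-triangle'' composites $c_i:=[f(\alpha_i)\alpha_i]$ (with $\alpha_i$ the arrow of $\triangle_i$ pointing into $k$). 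In particular $[\Gsymbol_\tau(p)^n]=[\Gsymbol_\tau(p)]^n$; as $m_\alpha\ge 6$ the cycle $[\Gsymbol_\tau(p)]$ has length $\ge 4$, so this power contributes nothing of degree $\le 2$, and the degree-two part of $\widetilde W$ is $d_1c_1+d_2c_2$ with $d_i:=f^2(\alpha_i)$ — exactly as in the case $n=1$.

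Next comes the reduction. We take $\varphi$ to be a Derksen--Weyman--Zelevinsky reduction right-equivalence of $(\tilde Q,\widetilde W)$; it can be chosen to fix every arrow other than $c_1,c_2,d_1,d_2$, so that $\mu_k(\Qtau,\Stauxn)=(\Qsigma,W_{\mathrm{red}})$ with $W_{\mathrm{red}}\sim_{\operatorname{cyc}}\pi(\varphi(\widetilde W))$, where $\pi\colon\RA{\tilde Q}\to\RA{\Qsigma}$ is the $R$-algebra surjection killing $c_1,c_2,d_1,d_2$. Since $\pi\varphi$ is a ring homomorphism and $[\Gsymbol_\tau(p)^n]=[\Gsymbol_\tau(p)]^n$,
$$
\pi(\varphi(\widetilde W))\;=\;\pi(\varphi(W_1))\;+\;x\,\bigl(\pi\varphi([\Gsymbol_\tau(p)])\bigr)^{n}\;-\;x\,\pi\varphi([\Gsymbol_\tau(p)]).
$$
The $n=1$ flip-compatibility theorem says precisely that the reduced part of $\widetilde{\mu}_k(\Qtau,\Staux)$ is right-equivalent to $(\Qsigma,\Ssigmax)$, i.e.\ $\pi(\varphi(W_1))\sim_{\operatorname{cyc}}\Ssigmax=\Tsigma+x\Gsymbol_\sigma(p)$; moreover the very same computation shows that $\pi\varphi([\Gsymbol_\tau(p)])$ is a rotation of $\Gsymbol_\sigma(p)$, and that all the images $\pi\varphi(c_i)$, $\pi\varphi(d_i)$ — and hence $\pi(\varphi(W_1))$ and $\pi\varphi([\Gsymbol_\tau(p)])$ — are independent of $n$ (the puncture cycle never enters the substitutions that define $\pi\varphi$ on the trivial arrows, since $[\Gsymbol_\tau(p)]^{n}$ contains no $c_i$). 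Consequently $\bigl(\pi\varphi([\Gsymbol_\tau(p)])\bigr)^{n}$ is a rotation of $\Gsymbol_\sigma(p)^n$, hence cyclically equivalent to it, and substituting gives
$$
W_{\mathrm{red}}\;\sim_{\operatorname{cyc}}\;\bigl(\Tsigma+x\Gsymbol_\sigma(p)\bigr)+x\Gsymbol_\sigma(p)^{n}-x\Gsymbol_\sigma(p)\;=\;\Tsigma+x\Gsymbol_\sigma(p)^{n}\;=\;\Ssigmaxn.
$$
Thus $\mu_k(\Qtau,\Stauxn)$ is right-equivalent to $(\Qsigma,\Ssigmaxn)$, and no hypothesis on $\operatorname{char}\field$ is needed.

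I expect the main obstacle to be the combinatorial content hidden in the reduction step: verifying that the reduction right-equivalence can be taken to fix all arrows other than $c_1,c_2,d_1,d_2$, that the $n=1$ reduction can be normalized to land exactly on $\Ssigmax$ up to cyclic equivalence, and — the key point — that $\pi\varphi([\Gsymbol_\tau(p)])$ is genuinely a rotation of the puncture cycle $\Gsymbol_\sigma(p)$ of the flipped triangulation. This boils down to tracing how the small loop around $p$ meets the two triangles $\triangle_1$ and $\triangle_2$ before and after the flip, in each of the configurations that can occur (for a closed surface $\triangle_1$ and $\triangle_2$ may share arcs besides $k$, so several cases arise), and it is in essence the heart of the $n=1$ flip-compatibility theorem; granting it, the passage to arbitrary $n$ reduces to the purely formal fact that the ring homomorphism $\pi\varphi$ commutes with taking $n$-th powers of a cycle.
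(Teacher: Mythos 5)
Your strategy — write $\widetilde\mu_k(\Stauxn)=W_1+x\bigl([\Gsymbol_\tau(p)]^n-[\Gsymbol_\tau(p)]\bigr)$ and apply the reduction right-equivalence $\varphi$ additively, invoking the $n=1$ case — is a reasonable plan, and the observation that $\pi\varphi$ is a ring map so commutes with the $n$-th power is a nice shortcut. But the key intermediate claim, that \emph{all} of $\pi\varphi(c_i)$, $\pi\varphi(d_i)$ are independent of $n$, is false, and the stated reason for it (``$[\Gsymbol_\tau(p)]^n$ contains no $c_i$'') proves the wrong half. In the iterative DWZ reduction the substitution on $d_i=a_i$ is governed by $\partial_{c_i}W$, while the substitution on $c_i=[b_ic_i]$ is governed by $\partial_{d_i}W$; since $[\Gsymbol_\tau(p)]^n=(a_1[A]a_2[B])^n$ does contain $d_i=a_i$, the cyclic derivative $\partial_{a_i}$ applied to it is nonzero and manifestly $n$-dependent. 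The paper's explicit automorphism $\varphi_2$ (and $\varphi_4$), which is exactly the reduction substitution on $[b_1c_1]$, carries the factor $\sum_{j=0}^{n-1}(\cdots)$ and is visibly $n$-dependent. So the premise you use to deduce $\pi(\varphi(W_1))\sim_{\operatorname{cyc}}\Ssigmax$ from the $n=1$ theorem does not hold; moreover the $n=1$ theorem speaks about the reduction automorphism \emph{of $W_1$}, not about $\pi\varphi(W_1)$ with $\varphi$ the $n$-reduction, and these are different automorphisms, so the black-box appeal is also a conflation.

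As it happens your final conclusion is correct, but for a different reason than you give. Using $\pi\varphi(a_i)=-c_i^*b_i^*$ (which \emph{is} $n$-independent, by exactly your chirality argument applied to the right factor), the $n$-dependent quantity $\pi\varphi([b_ic_i])$ enters $\pi\varphi(W_1)$ through the two terms $a_i[b_ic_i]+c_i^*b_i^*[b_ic_i]$, whose images under $\pi\varphi$ cancel identically because $\pi\varphi(a_i)=-c_i^*b_i^*$. This cancellation is the real content of the step, and verifying it amounts to carrying out the same explicit computation the paper does with $\varphi_1,\ldots,\varphi_4$. Compared with the paper's proof, yours is in principle more economical (black-boxing the $n=1$ case instead of re-deriving it), but you would still have to trace how $\pi\varphi$ acts on every term of $W_1$ — at which point the economy evaporates and one is essentially repeating the paper's direct calculation.
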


\begin{proof}
Let $a_i,b_i,c_i,~i=1,2$, be the arrows in the two triangles with one side $k$ as in the figure below.

\begin{figure}[H]
\begin{center}
\begin{tikzpicture}[y=.3cm, x=.3cm,font=\normalsize]

\draw (0,0) to (6.92,12);
\draw (0,0) to (-6.92,12);
\draw (-6.92,12) to (6.92,12);
\draw (0,24) to (6.92,12);
\draw (0,24) to (-6.92,12);
\filldraw[fill=black!100,draw=black!100] (0,24) circle (3pt)    node[anchor=north] {};
\filldraw[fill=black!100,draw=black!100] (0,0) circle (3pt)    node[anchor=north] {};
\filldraw[fill=black!100,draw=black!100] (-6.92,12) circle (3pt)    node[anchor=north] {};
\filldraw[fill=black!100,draw=black!100] (6.92,12) circle (3pt)    node[anchor=north] {};

\draw [<-, thick] (-2.7,6) to (2.7,6);
\draw [->, thick] (-2.7,6.3) to (-0.3,11.7);
\draw [<-, thick] (2.7,6.3) to (0.3,11.7);

\draw [->, thick] (-2.7,18) to (2.7,18);
\draw [<-, thick] (-2.7,17.7) to (-0.3,12.3);
\draw [->, thick] (2.7,17.7) to (0.3,12.3);

\node [below] at (0,6) {$a_2$};
\node [left] at (-1.5,9) {$c_2$};
\node [right] at (1.5,9) {$b_2$};

\node [above] at (0,18) {$a_1$};
\node [left] at (-1.5,15) {$b_1$};
\node [right] at (1.5,15) {$c_1$};

\node [above] at (0,12.4) {$k$};
\end{tikzpicture}\\
\caption{The two triangles with one side $k$.}
\label{fig:triangles one side k}
\end{center}
\end{figure}
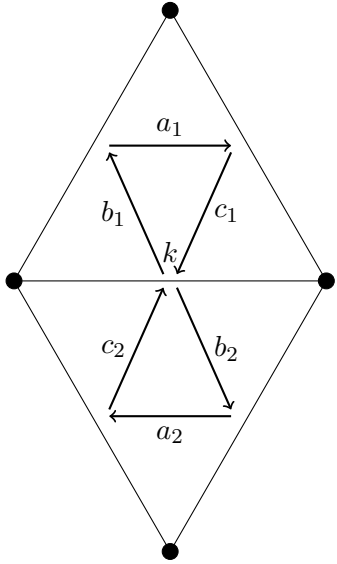

Up to rotation we can write $\Gsymbol(p)=a_1Aa_2B$. Notice that $b_2c_1,b_1c_2$ are factors of $\Gsymbol(p)$, but $b_1c_1$ and $b_2c_2$ are not. The potential $\widetilde{\mu}_k(\Stauxn)$ is cyclically equivalent to
\begin{eqnarray*}
[\Ttau] + x([a_1Aa_2B])^n +c_1^*b_2^*[b_2c_1]+c_2^*b_1^*[b_1c_2]+ c_1^*b_1^*[b_1c_1]+c_2^*b_2^*[b_2c_2] &=& \\
\Tsigma +a_1[b_1c_1]+a_2[b_2c_2]+ c_1^*b_1^*[b_1c_1]+c_2^*b_2^*[b_2c_2]+ x(a_1[A]a_2[B])^n , &&
\end{eqnarray*}
where the paths $[A],[B]$ are the result of replacing $b_2c_1,b_1c_2$ in $A,B$ by $[b_2c_1],[b_1c_2]$, respectively.

\begin{figure}[H]
\begin{center}
\begin{tikzpicture}[y=.3cm, x=.3cm,font=\normalsize]
\filldraw[fill=black!100,draw=black!100] (0,0) circle (3pt)    node[anchor=north] {};
\draw (-7,-7) to (0,0);
\draw (7,7) to (0,0);
\draw (7,-7) to (0,0);
\draw (-7,7) to (0,0);
\draw [->, thick] (-2.7,-3) to (2.7,-3);
\draw [<-, thick] (-2.7,3) to (2.7,3);
\draw [->, thick, dashed] (3,-2.7) to (3,2.7);
\draw [->, thick, dashed] (-3,2.7) to (-3,-2.7);
\node [below] at (0,-3) {$a_1$};
\node [above] at (0,3) {$a_2$};
\node [right] at (3,0) {$B$};
\node [left] at (-3,0) {$A$};
\node at (0,8) {$\Qtau$};

\filldraw[fill=black!100,draw=black!100] (28,0) circle (3pt)    node[anchor=north] {};
\draw (21,-7) to (28,0);
\draw (35,7) to (28,0);
\draw (35,-7) to (28,0);
\draw (21,7) to (28,0);
\draw (28,-7) to (28,0);
\draw (28,7) to (28,0);
\draw [->, thick, dashed] (31,-2.7) to (31,2.7);
\draw [->, thick, dashed] (25,2.7) to (25,-2.7);
\draw [<-, thick] (28.3,3) to (30.7,3);
\draw [<-, thick] (25.3,3) to (27.7,3);
\draw [->, thick] (28.3,-3) to (30.7,-3);
\draw [->, thick] (25.3,-3) to (27.7,-3);
\node [below] at (26.3,-3) {$b_1^*$};
\node [below] at (29.7,-3) {$c_1^*$};
\node [above] at (26.3,3) {$c_2^*$};
\node [above] at (29.7,3) {$b_2^*$};
\node [right] at (31,0) {$[B]$};
\node [left] at (25,0) {$[A]$};
\node at (28,8) {$\Qsigma$};
\end{tikzpicture}\\
\caption{The cycle on $\Qtau$ and $\Qsigma$ surrounding the puncture.}
\label{fig:Arrows surrounding the puncture}
\end{center}
\end{figure}
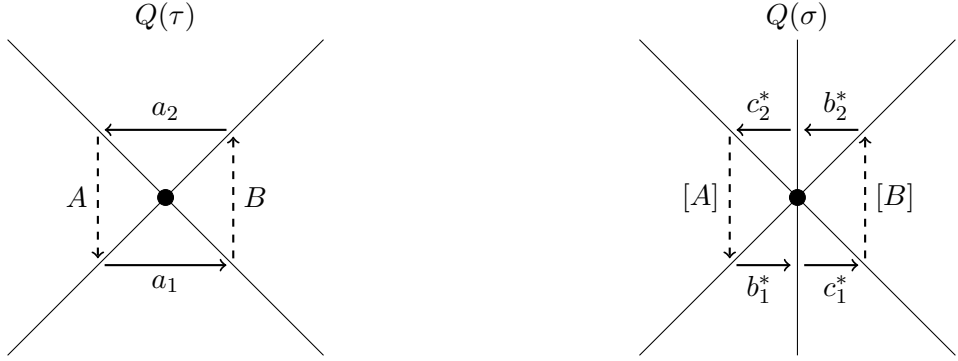

We define $R$-algebra homomorphisms $\varphi_1,\varphi_2:\RA{\Qtau}\rightarrow\RA{\Qtau}$ by means of the rules
\begin{align*}
\varphi_1(a_1) =& a_1-c_1^*b_1^*;\\
\varphi_2([b_1c_1]) =& [b_1c_1] - x\sum_{j=0}^{n-1}(-1)^j[A]a_2[B]((a_1-c_1^*b_1^*)[A]a_2[B])^{n-j-1}(c_1^*b_1^*[A]a_2[B])^j.
\end{align*}
Applying $\varphi_1$ to $\widetilde{\mu}_k(\Stauxn)$ we get
\begin{align*}
\varphi_1(\widetilde{\mu}_k(\Stauxn)) \sim_{\operatorname{cyc}}&
\Tsigma +a_1[b_1c_1]+a_2[b_2c_2] +c_2^*b_2^*[b_2c_2]+ x((a_1-c_1^*b_1^*)[A]a_2[B])^n\\
\sim_{\operatorname{cyc}}&
\Tsigma +a_1[b_1c_1]+a_2[b_2c_2]+c_2^*b_2^*[b_2c_2]+ x(-1)^n(c_1^*b_1^*[A]a_2[B])^n\\
&+x\sum_{j=0}^{n-1}(-1)^ja_1[A]a_2[B]((a_1-c_1^*b_1^*)[A]a_2[B])^{n-j-1}(c_1^*b_1^*[A]a_2[B])^j.
\end{align*}
The potential $\varphi_2\varphi_1(\widetilde{\mu}_k(\Stauxn))$ is cyclically equivalent to
\begin{align*}
\varphi_2\varphi_1(\widetilde{\mu}_k(\Stauxn)) \sim_{\operatorname{cyc}}&
\Tsigma +a_1[b_1c_1]+a_2[b_2c_2]+c_2^*b_2^*[b_2c_2]+ x(-1)^n(c_1^*b_1^*[A]a_2[B])^n.
\end{align*}
In an analogous way, we define $R$-algebra homomorphisms $\varphi_3,\varphi_4:\RA{\Qtau}\rightarrow\RA{\Qtau}$ by means of the rules
\begin{align*}
\varphi_3(a_2) =& a_2-c_2^*b_2^*;\\
\varphi_4([b_2c_2]) =& [b_2c_2] - x(-1)^n\sum_{j=0}^{n-1}(-1)^j[B]c_1^*b_1^*[A]((a_2-c_2^*b_2^*)[B]c_1^*b_1^*[A])^{n-j-1}(c_2^*b_2^*[B]c_1^*b_1^*[A])^j.
\end{align*}
We obtain
\begin{align*}
\varphi_4\varphi_3\varphi_2\varphi_1(\widetilde{\mu}_k(\Stauxn)) \sim_{\operatorname{cyc}}&
\Tsigma +a_1[b_1c_1]+a_2[b_2c_2]+ x(c_1^*b_1^*[A]c_2^*b_2^*[B])^n\\
\sim_{\operatorname{cyc}}& \Ssigmaxn +a_1[b_1c_1]+a_2[b_2c_2].
\end{align*}
Therefore, the QPs $\mu_k(\Qtau,\Stauxn)$ and $(\Qsigma,\Ssigmaxn)$ are right-equivalent.
\end{proof}

\begin{remark} \begin{enumerate}
\item For once-punctured closed surfaces, Theorem \ref{thm:flip->QP-mut-arbitrary-powers} constitutes a generalization of the second author's \cite[Theorem 30]{Labardini1} and \cite[Theorem 8.1]{Labardini4}.
\item It was observed by Ladkani \cite[Proposition 3.1]{Ladkani2} that the proof of \cite[Theorem 30]{Labardini1} can be applied without change to produce a proof of Theorem \ref{thm:flip->QP-mut-arbitrary-powers} above for $x=0$.
\item In his Master thesis \cite{Geuenich-master-thesis}, the first author of this paper proved Theorem \ref{thm:flip->QP-mut-arbitrary-powers} for the once-punctured torus and $x\neq 0$.
\item Motivated by the first author's Master thesis, the third author proved Theorem \ref{thm:flip->QP-mut-arbitrary-powers} in his Undergraduate thesis \cite{Miranda-undergrad-thesis}.
\end{enumerate}
\end{remark}

\begin{prop}\label{prop:unbounded-dimensions} Let $\surf$ be a once-punctured closed surface of positive genus, $\tau$ a triangulation of $\surf$, and $x\in K$ a non-zero scalar. If the characteristic of the field $K$ is zero, then
$$
\dim_K(\mathcal{P}(\Qtau,\Stauxn))<\infty \ \ \ \text{and} \ \ \ 
\lim_{n\to\infty}\dim_K(\mathcal{P}(\Qtau,\Stauxn))=\infty.
$$
\end{prop}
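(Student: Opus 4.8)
The plan is to treat the two assertions separately: the divergence $\lim_{n\to\infty}\dim_K\mathcal{P}(\Qtau,\Stauxn)=\infty$ by a direct computation with the path-length filtration, which uses only the \emph{degrees} in which the Jacobian relations live, and the finiteness $\dim_K\mathcal{P}(\Qtau,\Stauxn)<\infty$ by reducing, via Theorem~\ref{thm:flip->QP-mut-arbitrary-powers}, to a single triangulation and then to a controlled analysis of the Jacobian ideal.

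For the divergence, let $m=m_\alpha$ denote the size of the unique $g$-orbit of $\Qtau$, so that $|\Qtau_1|=m$, $\Gsymbol(p)$ has length $m$, and $\Stauxn=\Ttau+x\Gsymbol(p)^n$ with $\Gsymbol(p)^n$ a single cycle of length $mn$. By \eqref{eq:no-double-arrows} every arrow $a$ lies in exactly one $3$-cycle of $\Ttau$, hence $\partial_a\Stauxn=f^2(a)f(a)+xn\,G(mn-1,g(a))$, whose second summand is homogeneous of degree $mn-1$ (the only use of $\operatorname{char}K=0$ here is to make $xn\neq 0$, and even that is not needed for the inequality below). Passing to the associated graded of $\mathcal{P}(\Qtau,\Stauxn)$ for the $\maxid$-adic filtration: since the quadratic part of each relation is the monomial $f^2(a)f(a)$ and the rest has degree $mn-1$, the initial form of any element $p\,\partial_a(\Stauxn)\,q$ of total degree $<mn-1$ equals $p\,f^2(a)f(a)\,q$, while the initial forms produced by cancellation among quadratic parts have degree $\geq mn-1$; therefore, for every $d\leq mn-2$, the space $\operatorname{gr}_d\mathcal{P}(\Qtau,\Stauxn)$ is spanned by the length-$d$ paths on $\Qtau$ that contain no subpath of the form $f^2(a)f(a)$. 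Every $g$-path is of this kind, and for each $d\geq 1$ there are exactly $m$ pairwise distinct $g$-paths of length $d$ (one starting at each arrow). Hence $\dim_K\operatorname{gr}_d\mathcal{P}(\Qtau,\Stauxn)\geq m$ for $1\leq d\leq mn-2$, and so $\dim_K\mathcal{P}(\Qtau,\Stauxn)\geq m(mn-2)$, which tends to $\infty$ with $n$.

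For the finiteness, the case $n=1$ gives $\mathcal{P}(\Qtau,\Staux)$, which is finite-dimensional \cite{Labardini4,Ladkani}. For arbitrary $n$, Theorem~\ref{thm:flip->QP-mut-arbitrary-powers} identifies $\mu_k(\Qtau,\Stauxn)$ with $(\Qsigma,\Ssigmaxn)$; combining this with the preservation of finite-dimensionality of Jacobian algebras under QP-mutation \cite{DWZ1} and with the connectedness of the flip graph of $\surf$, it suffices to prove, for each fixed $n$, that $\mathcal{P}(\Qtau,\Stauxn)$ is finite-dimensional for one chosen triangulation $\tau$. I expect this last step — finiteness for a fixed triangulation and \emph{all} $n$ — to be the main obstacle. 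Concretely, one should exhibit an explicit $D_n$ such that every path of length $\geq D_n$ lies in the Jacobian ideal: the relations $f^2(a)f(a)=-xn\,G(mn-1,g(a))$ together with the induced three-cycle syzygies $G(mn-1,g(a))\,a=f^2(a)\,G(mn-1,gf^2(a))$ (and their rotations) allow one to rewrite long $g$-subpaths as short $f$-words and conversely, and the heart of the matter — exactly as in the treatment of the case $n=1$ in \cite{Ladkani} — is to organize these substitutions so that the process terminates and to read off the bound $D_n$ as an explicit function of $n$ and $\surf$. Finally, the hypothesis $\operatorname{char}K=0$ is indispensable: if $\operatorname{char}K$ divides $n$ then $\partial_a(\Gsymbol(p)^n)=0$, the Jacobian algebra becomes $\RA{\Qtau}/\overline{(f^2(a)f(a)\colon a\in\Qtau_1)}$, and this algebra is infinite-dimensional because every $g$-path survives in it.
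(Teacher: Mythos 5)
Your argument for the divergence $\lim_{n\to\infty}\dim_K\mathcal{P}(\Qtau,\Stauxn)=\infty$ is correct, and it proceeds by a slightly different (and somewhat sharper) route than the paper: you pass to the associated graded for the $\maxid$-adic filtration and use the degree gap between the quadratic part $f^2(a)f(a)$ and the $g$-part $xnG(nm_\alpha-1,g(\alpha))$ of $\partial_a(\Stauxn)$ to conclude that, in each degree $d\leq nm_\alpha-2$, the $m_\alpha$ distinct $g$-paths of length $d$ survive. The paper makes the same degree-gap observation but phrases it directly, without introducing the associated graded: since no $g$-path of length $<nm_\alpha-1$ can appear in any $a\,\partial_\alpha(\Stauxn)\,b$, the short $g$-paths are linearly independent in the Jacobian algebra. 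Both arguments are valid.

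The finiteness half, however, has a genuine gap. The reduction you set up via Theorem~\ref{thm:flip->QP-mut-arbitrary-powers} and the mutation-invariance of finite-dimensionality is vacuous for the present purpose: it reduces ``$\dim_K\mathcal{P}(\Qtau,\Stauxn)<\infty$ for all $\tau$ and all $n$'' to ``$\dim_K\mathcal{P}(\Qtau,\Stauxn)<\infty$ for one fixed $\tau$ and all $n$,'' which is the same difficulty, since the hard dependence is on $n$, not on $\tau$. The case $n=1$ cited from \cite{Labardini4,Ladkani} says nothing about $n>1$. You then acknowledge that finiteness for all $n$ is the ``main obstacle'' and only sketch a rewriting strategy without establishing that it terminates or produces a bound $D_n$ — and the syzygy $G(mn-1,g(a))\,a=f^2(a)\,G(mn-1,gf^2(a))$ you invoke does not even parse as written (the endpoints of $gf^2(a)$ and $f^2(a)$ do not match). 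This is precisely the content the paper supplies, in three short steps all drawn from the single relation $f^2(\alpha)f(\alpha)=-xnG(nm_\alpha-1,g(\alpha))$: first, every $fg$-path of length $3$ equals a strictly longer $fg$-path in $\mathcal{P}(\Qtau,\Stauxn)$, so by induction every $fg$-path is $0$; second, every $f$-path $F(r,f(\beta))$ with $r>3$ rewrites to an $fg$-path, hence $0$; third, every $g$-path $G(r,g(\beta))$ with $r>nm_\beta$ rewrites to $x^{-1}n^{-1}G(r-nm_\beta+1,\beta)f^2(\beta)f(\beta)$ — this is where $\operatorname{char}K=0$ enters, to divide by $n$ — hence $0$. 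Consequently every path of length $>nm_\alpha$ is $0$ in $\mathcal{P}(\Qtau,\Stauxn)$, giving the explicit bound your sketch was missing.
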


\begin{proof}
For the proof of finite-dimensionality we follow ideas suggested by Ladkani in his proof of \cite[Proposition 4.2]{Ladkani}, whereas our proof that the limits of the dimensions is $\infty$ follows ideas that appear in the first author's Master thesis.

First, note that when we compute the cyclic derivative of $\Stauxn$ with respect to an arrow $\alpha$, we get
\begin{equation}\label{eq:ciclyc derivative}
\partial_\alpha(\Stauxn)=f^2(\alpha)f(\alpha)+xnG(nm_\alpha-1,g(\alpha)).
\end{equation}
So, $f^2(\alpha)f(\alpha)$ and $-xnG(nm_\alpha-1,g(\alpha))$ become equal in the Jacobian algebra $\mathcal{P}(\Qtau,\Stauxn)$. 

Every $fg$-path of length three has the form $f^2(\alpha)f(\alpha)g^{-1}f(\alpha)$ or $gf^2(\alpha)f^2(\alpha)f(\alpha)$ for some arrow $\alpha$, and it is hence equal to $-xnG(nm_\alpha-1,g(\alpha))g^{-1}f(\alpha)=-xnG(nm_\alpha-3,g^3(\alpha))g^2(\alpha)g(\alpha)f^{-1}g(\alpha)$ or $-xngf^2(\alpha)G(nm_\alpha-1,g(\alpha))=-xnfg^{-1}(\alpha)g^{-1}(\alpha)g^{-2}(\alpha)G(nm_\alpha-3,g(\alpha))$ in $\mathcal{P}(\Qtau,\Stauxn)$. Thus every $fg$-path of length three is equal in $\mathcal{P}(\Qtau,\Stauxn)$ to another $fg$-path of length greater than three. In the same vein, an easy inductive argument shows that, in the Jacobian algebra, every $fg$-path is to an arbitrarily long $fg$-path, and therefore equal to $0\in \mathcal{P}(\Qtau,\Stauxn)$.

Any $f$-path $F(r,f(\beta))=F(r-2,\beta)f^2(\beta)f(\beta)$ of length $r$ greater than three, is equal to the $fg$-path $-xnF(r-2,\beta)G(nm_\beta-1,g(\beta))$ in $\mathcal{P}(\Qtau,\Stauxn)$, and in this way, to $0$. Furthermore, any $g$-path of the form $G(r,g(\beta))=G(r-nm_\beta+1,\beta)G(nm_\beta-1,g(\beta))$ with length greater than $nm_\beta$, is equal to the $fg$-path $x^{-1}n^{-1}G(r-nm_\beta+1,\beta)f^2(\beta)f(\beta)$, hence equal to $0$ in the Jacobian algebra. Notice that here, we have used that $\field$ is a field of characteristic zero.

Thus far, we have shown that every path of length greater than $nm_\alpha$ is equivalent to 0 in the Jacobian algebra $\mathcal{P}(\Qtau,\Stauxn)$, and therefore the latter has finite dimension.

On the other hand, as the cyclic derivative of $\Stauxn$ with respect to any arrow $\alpha$ is equal to the sum of an $f$-path of length two and a scalar multiple of a $g$-path of length $nm_\alpha-1$ \eqref{eq:ciclyc derivative}, and since no $g$-path is a multiple of any $f$-path of length greater than one, we conclude that for any $a,b \in \RA{\Qtau}$, no $g$-path of length smaller than  $nm_\alpha-1$ appears in the expression of the element $a\partial_\alpha(\Stauxn) b$ as a possibly infinite sum of paths on the quiver $\Qtau$. From this, it follows that no finite linear combination of $g$-paths of lengths smaller than $nm_\alpha-1$ can be written as a limit of finite sums of elements of the form $a\partial_\alpha(\Stauxn) b$, i.e., the set of $g$-paths of length smaller than $nm_\alpha-1$ is linearly independent in the Jacobian algebra $\mathcal{P}(\Qtau,\Stauxn)$. Therefore, $\dim_K(\mathcal{P}(\Qtau,\Stauxn))\geq nm_\alpha-2$.
\end{proof}

\begin{coro} Over a field of characteristic zero, the quiver of any triangulation of a once-punctured closed surface of positive genus admits infinitely many non-degenerate potentials up to weak right-equivalence.
\end{coro}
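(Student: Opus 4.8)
The corollary is essentially a packaging of the two assertions in Proposition \ref{prop:unbounded-dimensions}, combined with Theorem \ref{thm:flip->QP-mut-arbitrary-powers} and the basic theory of QP-mutations. The plan is as follows. First I would recall that a potential $S$ on $\Qtau$ is by definition non-degenerate if every iterated QP-mutation of $(\Qtau,S)$ is $2$-acyclic (equivalently, yields a QP in the sense of \cite{DWZ1}). I would then argue that for every positive integer $n$ the potential $\Stauxn$ is non-degenerate. Indeed, by Theorem \ref{thm:flip->QP-mut-arbitrary-powers}, if $\tau$ and $\sigma$ are triangulations of $\surf$ related by the flip of an arc $k$, then $\mu_k(\Qtau,\Stauxn)$ is right-equivalent to $(\Qsigma,\Ssigmaxn)$; since flips always produce honest triangulations (the surface has exactly one puncture, and the flip of any arc is defined and yields a triangulation), and since $\Qsigma$ is $2$-acyclic, every finite sequence of QP-mutations along flippable arcs keeps us within the family $\{(\Qtau,\Stauxn)\}_\tau$, each term of which is $2$-acyclic. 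Mutations at vertices $k$ whose flip is "the same arc with a different tag" do not occur here because $\surf$ is closed with a single puncture and $\tau$ can be taken to consist of arcs only; more carefully, one invokes that in the once-punctured closed case every QP-mutation of $\Qtau$ is realized by a flip, exactly as in \cite[Theorem 8.1]{Labardini4} now upgraded by Theorem \ref{thm:flip->QP-mut-arbitrary-powers}. Hence each $\Stauxn$ is non-degenerate.

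Second, I would use Proposition \ref{prop:unbounded-dimensions} to separate these potentials up to weak right-equivalence. The point is that the dimension of the Jacobian algebra is a weak-right-equivalence invariant: if $S\sim_{\operatorname{r.e.}}\lambda S'$ for some nonzero $\lambda$, then $\mathcal{P}(\Qtau,S)\cong\mathcal{P}(\Qtau,\lambda S')=\mathcal{P}(\Qtau,S')$ as $\field$-algebras (rescaling a potential by a nonzero scalar does not change its Jacobian ideal), so in particular $\dim_\field\mathcal{P}(\Qtau,S)=\dim_\field\mathcal{P}(\Qtau,S')$. By Proposition \ref{prop:unbounded-dimensions}, each $\mathcal{P}(\Qtau,\Stauxn)$ is finite-dimensional and $\dim_\field\mathcal{P}(\Qtau,\Stauxn)\to\infty$ as $n\to\infty$. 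Therefore infinitely many of the integers $n$ give pairwise distinct values of this dimension, and the corresponding potentials $\Stauxn$ are pairwise not weakly right-equivalent. Combining with the first step, $\{\Stauxn : n\geq 1\}$ contains infinitely many weak-right-equivalence classes of non-degenerate potentials, which is the assertion.

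The main obstacle is the non-degeneracy claim, i.e. the first step: one must be sure that \emph{every} iterated QP-mutation of $(\Qtau,\Stauxn)$ — not merely those coming from a single flip — stays $2$-acyclic, and this requires knowing that (a) the flip graph of triangulations of a once-punctured closed surface is connected and that every QP-mutation of $\Qtau$ at any vertex corresponds to a flip (so that no "exotic" mutation can break $2$-acyclicity), and (b) Theorem \ref{thm:flip->QP-mut-arbitrary-powers} is compatible with composing flips, which follows since right-equivalences are preserved under $\mu_k$ and $\mu_k$ is an involution up to right-equivalence (\cite[Theorem 5.7]{DWZ1}). Granting these standard facts about the surface case, the argument is immediate. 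I would phrase the write-up so as to cite \cite{Labardini4} for the realization of mutations by flips and \cite{DWZ1} for the formal properties of QP-mutation, and keep the Jacobian-dimension invariance observation as a one-line remark.
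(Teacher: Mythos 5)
Your argument is correct and matches the paper's (implicit) proof: the corollary is exactly the combination of Proposition \ref{prop:unbounded-dimensions} (finite Jacobian dimensions tending to infinity, hence infinitely many distinct dimensions, a weak-right-equivalence invariant since rescaling a potential by a nonzero scalar does not change the Jacobian ideal) with the non-degeneracy of $\Stauxn$, which as you say follows from Theorem \ref{thm:flip->QP-mut-arbitrary-powers} together with the fact that for a once-punctured closed surface every arc is flippable, every quiver mutation is realized by a flip, and every $Q(\sigma)$ is $2$-acyclic. The only extra information the paper offers beyond your argument is the remark attributing to the third author's thesis a sharper statement, namely that distinct $n$ always give Jacobian algebras of distinct dimensions, but the unboundedness you use already suffices.
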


\begin{remark} 
\begin{enumerate}
\item In the case of the once-punctured torus, Proposition \ref{prop:unbounded-dimensions}, was proved by the first author in his Master thesis \cite{Geuenich-master-thesis}.
\item In his Undergraduate thesis \cite{Miranda-undergrad-thesis}, the third author has computed an actual $K$-vector space basis of $\mathcal{P}(\Qtau,\Stauxn)$ for each $n\geq 1$, showing in particular that different values of $n$ never yield Jacobian algebras with the same dimension. This implies that different values of $n$ always yield potentials that are not weakly right-equivalent.
\end{enumerate}
\end{remark}
  \section{Twice-punctured surfaces}

\label{sec:twice-punctured-surfaces}

In this section we prove part \eqref{item:main-thm-two-punctures} of Theorem \ref{main-thm-intro}, namely:

\begin{thm}\label{thm:uniqueness-two-punctures} Let $\surf$ be a twice-punctured closed surface of positive genus, and let $\tau$ be any (tagged) triangulation of $\surf$. Over an algebraically closed field, any two non-degenerate potentials on the quiver $\Qtau$ are weakly right-equivalent.
\end{thm}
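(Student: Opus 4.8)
The plan is to reduce the classification of non-degenerate potentials on $\Qtau$ to a normal-form problem, exploiting the preliminary results of Section \ref{sec:preliminaries} to strip away all contributions except powers of the two $g$-cycles, one around each puncture. By Lemma \ref{lemma:non-degenerate-potentials-contains-Ttau}, every non-degenerate potential $S$ is right-equivalent to $\Ttau+U$ with $U$ rotationally disjoint from $\Ttau$. Applying Proposition \ref{prop:replacing-with-sums-of-powers-of-g-cycles} with $Z=0$, I can further assume $U=W$ involves only positive powers of $g$-cycles; since there are exactly two punctures $p_1,p_2$, this means $S\sim_{\operatorname{r.e.}}\Ttau+\sum_{i=1,2}\sum_{n\geq 1}\nu_{i,n}\Gsymbol(p_i)^n$ for scalars $\nu_{i,n}\in\field$. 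The first key step is to show that non-degeneracy forces $\nu_{i,1}\neq 0$ for each $i$: if some $\nu_{i,1}=0$, the lowest-degree $g$-term at $p_i$ has some power $\geq 2$ (or vanishes), and I expect to derive a contradiction with non-degeneracy by exhibiting a sequence of flips/mutations — or by invoking \cite[Corollary 2.5]{GLFS}-type arguments together with the structure of $g$-cycles at a puncture of valency $\geq 4$ — showing the Jacobian algebra becomes infinite-dimensional or the potential degenerates. (This is where the hypothesis of two punctures, rather than the "all arcs join distinct punctures" condition used in \cite{GLFS}, has to be worked around.)

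Granting $\nu_{i,1}\neq 0$, the second step is to use Corollary \ref{coro:essential} together with unitriangular automorphisms rescaling the arrows in each $g$-orbit to absorb all higher powers $\nu_{i,n}\Gsymbol(p_i)^n$ with $n\geq 2$ into longer and longer $g$-cycles, passing to a limit to obtain $S\sim_{\operatorname{r.e.}}\Ttau+x_1\Gsymbol(p_1)+x_2\Gsymbol(p_2)=S(\tau,\mathbf{x})$ for some non-zero scalars $x_1,x_2$. Concretely: write the lowest power-$\geq 2$ offending term as $\nu_{i,n}\Gsymbol(p_i)^n = \nu_{i,n}f(a)a\,G(t,g^{-t}(a))\,c$ for a suitable arrow $a$, path $c$ (an arrow, after rotating), and $t$ chosen so the hypotheses of Lemma \ref{lemma:very-technical-lemma} hold against the remaining potential; then Corollary \ref{coro:essential} replaces it by a potential supported on strictly longer $g$-cycles, and iterating and taking the limit removes all $n\geq 2$ contributions. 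The third and final step is to show $S(\tau,\mathbf{x})$ and $S(\tau,\mathbf{x}')$ are weakly right-equivalent whenever $\mathbf{x},\mathbf{x}'$ are both non-zero: here the algebraic closedness of $\field$ enters, since I rescale the arrows of the $g$-orbit at $p_1$ by an $m_{p_1}$-th root of $x_1'/x_1$ and those at $p_2$ by an $m_{p_2}$-th root of $x_2'/x_2$ (adjusting the global scalar $\lambda$ in the definition of weak right-equivalence to reconcile the two), checking that such rescalings fix $\Ttau$ up to the allowed scalar because each arrow lies in exactly one $g$-orbit and exactly one triangle. Combining the three steps, any two non-degenerate potentials are weakly right-equivalent to $S(\tau,\mathbf{x})$ for a common $\mathbf{x}$, hence to each other.

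The main obstacle is the first step — proving $\nu_{i,1}\neq 0$ for non-degenerate $S$, i.e., that the $1$-power $g$-cycle around each puncture genuinely appears. Unlike the $f$-cycles, for which \cite[Corollary 2.5]{GLFS} directly guarantees appearance in every non-degenerate potential, the appearance of the first power of each $g$-cycle is more delicate and is exactly the point at which the proof of \cite[Theorem 8.4]{GLFS} used triangulations with all arcs between distinct punctures. I anticipate handling this by a local flip argument: choose (or flip to) a triangulation where the combinatorics near $p_i$ are as simple as possible, track what the mutation does to the $g$-part of the potential using Theorem \ref{thm:flip->QP-mut-arbitrary-powers}-style computations adapted to general potentials, and show that if $\nu_{i,1}=0$ then after finitely many flips one reaches a quiver on which the would-be potential fails to be a QP in Derksen--Weyman--Zelevinsky's sense, contradicting non-degeneracy; alternatively, a dimension-count on the Jacobian algebra analogous to Proposition \ref{prop:unbounded-dimensions} may show $\nu_{i,1}=0$ forces either infinite dimension or a trivial arrow, again contradicting non-degeneracy.
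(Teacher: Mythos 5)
Your overall architecture matches the paper closely: reduce every non-degenerate potential to $\Ttau+W$ with $W$ a sum of $g$-cycle powers (via Lemma~\ref{lemma:non-degenerate-potentials-contains-Ttau} and Proposition~\ref{prop:replacing-with-sums-of-powers-of-g-cycles}), show the first powers $\Gsymbol(p_i)$ must appear, absorb the higher powers, then handle the choice of coefficients $\mathbf{x}$ via rescaling over an algebraically closed field. You also correctly single out the appearance of $\Gsymbol(p_i)^1$ in $W$ as the crux, and correctly diagnose that this is exactly where the argument of \cite{GLFS} breaks down for two punctures.

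However, this crux is precisely where your proposal has a genuine gap, and neither of the two alternatives you float would suffice as stated. The dimension-count alternative cannot work: in the once-punctured case the paper itself shows (Proposition~\ref{prop:unbounded-dimensions}) that the Jacobian algebra of $\Ttau+x\Gsymbol(p)^n$ remains \emph{finite}-dimensional for all $n\geq 2$, so dropping the first power does not force infinite dimension, and the whole point of part~\eqref{item:main-thm-one-puncture} of Theorem~\ref{main-thm-intro} is that such potentials are still non-degenerate. Your flip-argument alternative is the right direction, but the phrase ``flip to a triangulation where the combinatorics near $p_i$ are as simple as possible'' hides the real content. What the paper actually does (Example~\ref{ex:8g-4g-triangulation} and Proposition~\ref{prop:g-cycles-always-appear-2-punctures}) is (i) fix the very specific triangulation of Figure~\ref{Fig:8g-4g-triangulation}, in which $p$ has valency $8g$ and $q$ has valency $4g$; (ii) apply \cite[Proposition~2.4]{GLFS} with the vertex subset $I=\{2g+1,\dots,6g\}$ to force the coefficient of $\Gsymbol(q)$ to be non-zero; and (iii) perform the explicit sequence of $2g$ premutations $\widetilde\mu_{2g}\cdots\widetilde\mu_1$ and compute its reduced part, observing that the resulting quiver equals $Q(\sigma)$ for a triangulation $\sigma$ obtained from $\tau$ by a homeomorphism interchanging $p$ and $q$, so that the same \cite[Proposition~2.4]{GLFS} argument now forces the coefficient of $\Gsymbol(p)$ to be non-zero as well. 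This ``swap the two punctures by an explicit length-$2g$ mutation sequence and re-run the argument'' trick is the new idea that makes the two-puncture case go through, and it does not appear in your proposal. Without it, the main step is unproved.

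As a secondary remark, your final rescaling step silently assumes each triangle of $\tau$ contributes the same pair $(a,b)$ with $a+b=3$ of arrows to the $g$-orbits of $p_1$ and $p_2$, so that scaling one orbit by $\mu_1$ and the other by $\mu_2$ multiplies \emph{every} $3$-cycle of $\Ttau$ by the same constant $\mu_1^a\mu_2^b$. For the paper's chosen triangulation this holds (each triangle has one arrow around $q$ and two around $p$), but it is not automatic for an arbitrary triangulation; the paper sidesteps this by working with its fixed triangulation and invoking \cite[Lemma~8.5]{GLFS} for flip-invariance and for the comparison of $\Staux$ across different $\mathbf{x}$. You should either fix the triangulation first or make this step explicit.
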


Since any two ideal triangulations of $\surf$ are related by a finite sequence of flips (see \cite{Mosher}), the first paragraphs of the proof of \cite[Lemma 8.5]{GLFS} imply that the mere exhibition of a single triangulation $\tau$ of $\surf$, with $\Qtau$ having only one weak right equivalence class of non-degenerate potentials, suffices in order to prove Theorem \ref{thm:uniqueness-two-punctures}.

\begin{ex}\label{ex:8g-4g-triangulation} Figure \ref{Fig:8g-4g-triangulation} sketches a triangulation $\tau$ of a positive-genus twice-punctured surface with empty boundary. The triangulation is easily seen to satisfy \eqref{eq:valency-at-least-4} and \eqref{eq:no-double-arrows}. Note that the puncture $p$ has valency $8g$ and the other puncture $q$ has valency $4g$.
\begin{figure}[H]
\begin{center}
\begin{tikzpicture}[y=.6cm, x=.6cm,font=\normalsize]
\node [above] at (0,6.01) {$1$};
\draw (-2.49,6.01) -- (2.49,6.01);

\node [right] at (4.25,4.25) {$2$};
\draw (2.49,6.01) -- (6.01,2.49);

\draw  (6.01,2.49) -- (6.01,-2.49);
\node [right] at (6.01,0) {$1$};

\node [right] at (4.25,-4.25) {$2$};
\draw (2.49,-6.01) -- (6.01,-2.49);

\node [left] at (-6.01,0) {$2g-1$};
\draw (-6.01,-2.49) -- (-6.01,2.49);

\node [left] at (-4.25,4.25) {$2g$};
\draw (-2.49,6.01) -- (-6.01,2.49);

\node [left] at (-4.25,-4.25) {$2g$};
\draw (-2.49,-6.01) -- (-6.01,-2.49);

\draw (-2.49,-6.01) -- (0,0);
\draw (2.49,-6.01) -- (0,0);
\draw (-2.49,6.01) -- (0,0);
\draw (2.49,6.01) -- (0,0);

\draw (6.01,2.49) -- (0,0);
\draw (-6.01,2.49) -- (0,0);
\draw (-6.01,-2.49) -- (0,0);
\draw (6.01,-2.49) -- (0,0);

\node [above] at (0,0.3) {$q$};

\node [right] at (6.01,2.49) {$p$};
\node [left] at (-6.01,2.49) {$p$};
\node [right] at (6.01,-2.49) {$p$};
\node [left] at (-6.01,-2.49) {$p$};
\node [above] at (2.49,6.01) {$p$};
\node [above] at (-2.49,6.01) {$p$};
\node [below] at (2.49,-6.01) {$p$};
\node [below] at (-2.49,-6.01) {$p$};

\filldraw[fill=black!100,draw=black!100] (1,-6.01) circle (1pt)    node[anchor=north] {};
\filldraw[fill=black!100,draw=black!100] (-1,-6.01) circle (1pt)    node[anchor=north] {};
\filldraw[fill=black!100,draw=black!100] (0,-6.01) circle (1pt)    node[anchor=north] {};
\filldraw[fill=black!100,draw=black!100] (0,0) circle (3pt)    node[anchor=north] {};
\filldraw[fill=black!100,draw=black!100] (6.01,2.49) circle (3pt)    node[anchor=north] {};
\filldraw[fill=black!100,draw=black!100] (6.01,-2.49) circle (3pt)    node[anchor=north] {};
\filldraw[fill=black!100,draw=black!100] (-6.01,-2.49) circle (3pt)    node[anchor=north] {};
\filldraw[fill=black!100,draw=black!100] (-6.01,2.49) circle (3pt)    node[anchor=north] {};
\filldraw[fill=black!100,draw=black!100] (2.49,6.01) circle (3pt)    node[anchor=north] {};
\filldraw[fill=black!100,draw=black!100] (2.49,-6.01) circle (3pt)    node[anchor=north] {};
\filldraw[fill=black!100,draw=black!100] (-2.49,-6.01) circle (3pt)    node[anchor=north] {};
\filldraw[fill=black!100,draw=black!100] (-2.49,6.01) circle (3pt)    node[anchor=north] {};

\node [above] at (1.40,3) {$2g+1$};
\node [above] at (3,1.24) {$2g+2$};
\node [below] at (1.24,-3) {$2g+4$};
\node [below] at (3,-1.24) {$2g+3$};

\node [above] at (-1.24,3) {$6g$};
\node [above] at (-3,1.24) {$6g-1$};
\node [below] at (-1.24,-3) {$6g-3$};
\node [below] at (-3,-1.24) {$6g-2$};
\end{tikzpicture}\\
\caption{A triangulation $\tau$ of a twice-punctured closed surface $(\Sigma,\mathbb{M})$ of positive-genus.}
\label{Fig:8g-4g-triangulation}
\end{center}
\end{figure}
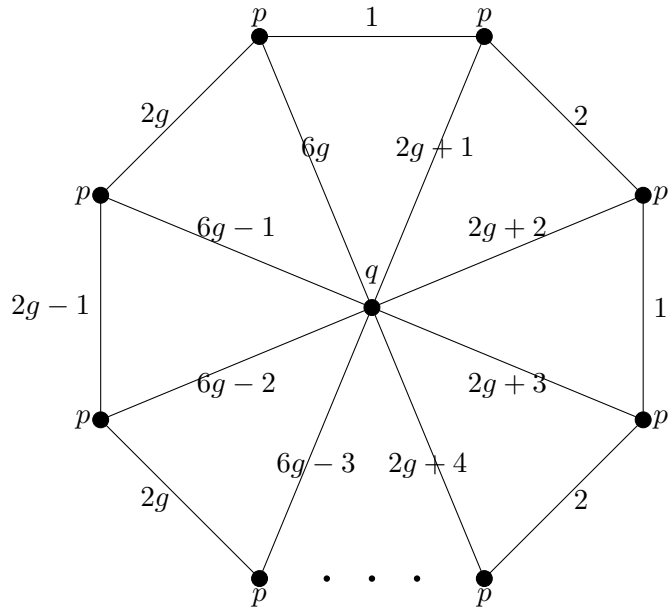

\begin{figure}[H]
\begin{center}
\begin{tikzpicture}[y=.6cm, x=.6cm,font=\normalsize]
\node [above] at (0,6.01) {$1$};
\draw (-2.49,6.01) -- (2.49,6.01);

\node [right] at (4.25,4.25) {$2$};
\draw (2.49,6.01) -- (6.01,2.49);

\draw  (6.01,2.49) -- (6.01,-2.49);
\node [right] at (6.01,0) {$1$};

\node [right] at (4.25,-4.25) {$2$};
\draw (2.49,-6.01) -- (6.01,-2.49);

\node [left] at (-6.01,0) {$2g-1$};
\draw (-6.01,-2.49) -- (-6.01,2.49);

\node [left] at (-4.25,4.25) {$2g$};
\draw (-2.49,6.01) -- (-6.01,2.49);

\node [left] at (-4.25,-4.25) {$2g$};
\draw (-2.49,-6.01) -- (-6.01,-2.49);

\draw (-2.49,-6.01) -- (0,0);
\draw (2.49,-6.01) -- (0,0);
\draw (-2.49,6.01) -- (0,0);
\draw (2.49,6.01) -- (0,0);

\draw (6.01,2.49) -- (0,0);
\draw (-6.01,2.49) -- (0,0);
\draw (-6.01,-2.49) -- (0,0);
\draw (6.01,-2.49) -- (0,0);

\node [below] at (0,-0.3) {$q$};

\node [right] at (6.01,2.49) {$p$};
\node [left] at (-6.01,2.49) {$p$};
\node [right] at (6.01,-2.49) {$p$};
\node [left] at (-6.01,-2.49) {$p$};
\node [above] at (2.49,6.01) {$p$};
\node [above] at (-2.49,6.01) {$p$};
\node [below] at (2.49,-6.01) {$p$};
\node [below] at (-2.49,-6.01) {$p$};

\filldraw[fill=black!100,draw=black!100] (1,-6.01) circle (1pt)    node[anchor=north] {};
\filldraw[fill=black!100,draw=black!100] (-1,-6.01) circle (1pt)    node[anchor=north] {};
\filldraw[fill=black!100,draw=black!100] (0,-6.01) circle (1pt)    node[anchor=north] {};
\filldraw[fill=black!100,draw=black!100] (0,0) circle (3pt)    node[anchor=north] {};
\filldraw[fill=black!100,draw=black!100] (6.01,2.49) circle (3pt)    node[anchor=north] {};
\filldraw[fill=black!100,draw=black!100] (6.01,-2.49) circle (3pt)    node[anchor=north] {};
\filldraw[fill=black!100,draw=black!100] (-6.01,-2.49) circle (3pt)    node[anchor=north] {};
\filldraw[fill=black!100,draw=black!100] (-6.01,2.49) circle (3pt)    node[anchor=north] {};
\filldraw[fill=black!100,draw=black!100] (2.49,6.01) circle (3pt)    node[anchor=north] {};
\filldraw[fill=black!100,draw=black!100] (2.49,-6.01) circle (3pt)    node[anchor=north] {};
\filldraw[fill=black!100,draw=black!100] (-2.49,-6.01) circle (3pt)    node[anchor=north] {};
\filldraw[fill=black!100,draw=black!100] (-2.49,6.01) circle (3pt)    node[anchor=north] {};

\draw [<-, thick] (-1.21,3) -- (1.21,3);
\draw [<-, thick] (1.21,3.3) -- (0.15,5.7);
\draw [->, thick] (-1.21,3.3) -- (-0.15,5.7);

\draw [->, thick] (3,-1.21) -- (3,1.21);
\draw [->, thick] (3.3,1.21) -- (5.7,0.15);
\draw [<-, thick] (3.3,-1.21) -- (5.7,-0.15);

\draw [<-, thick] (-3,-1.21) -- (-3,1.21);
\draw [<-, thick] (-3.3,1.21) -- (-5.7,0.15);
\draw [->, thick] (-3.3,-1.21) -- (-5.7,-0.15);

\draw [<-, thick] (1.23,3) -- (3,1.23);
\draw [->, thick] (1.28,3.2) -- (4.17,4.27);
\draw [<-, thick] (3.2,1.28) -- (4.27,4.17);

\draw [->, thick] (-1.23,3) -- (-3,1.23);
\draw [<-, thick] (-1.28,3.2) -- (-4.17,4.27);
\draw [->, thick] (-3.2,1.28) -- (-4.27,4.17);

\draw [->, thick] (1.23,-3) -- (3,-1.23);
\draw [<-, thick] (1.28,-3.2) -- (4.17,-4.27);
\draw [->, thick] (3.2,-1.28) -- (4.27,-4.17);

\draw [<-, thick] (-1.23,-3) -- (-3,-1.23);
\draw [->, thick] (-1.28,-3.2) -- (-4.17,-4.27);
\draw [<-, thick] (-3.2,-1.28) -- (-4.27,-4.17);

\node [below] at (0,3) {$a_1$};
\node [right] at (0.68,4.5) {$b_1$};
\node [left] at (-0.68,4.5) {$c_1$};

\node [left] at (3,0) {$a_3$};
\node [above] at (4.5,0.68) {$c_3$};
\node [below] at (4.5,-0.68) {$b_3$};

\node [right] at (-3,0) {$a_{4g-1}$};
\node [above] at (-4.5,0.68) {$b_{4g-1}$};
\node [below] at (-4.5,-0.68) {$c_{4g-1}$};

\node [left] at (2.11,2.11) {$a_2$};
\node [above] at (2.72,3.73) {$c_2$};
\node [right] at (3.73,2.72) {$b_2$};

\node [left] at (2.11,-2.11) {$a_4$};
\node [below] at (2.72,-3.73) {$b_4$};
\node [right] at (3.73,-2.72) {$c_4$};

\node [right] at (-2.11,2.11) {$a_{4g}$};
\node [above] at (-2.72,3.73) {$b_{4g}$};
\node [left] at (-3.73,2.72) {$c_{4g}$};

\node [right] at (-2.11,-2.11) {$a_{4g-2}$};
\node [below] at (-2.72,-3.73) {$c_{4g-2}$};
\node [left] at (-3.73,-2.72) {$b_{4g-2}$};
\end{tikzpicture}\\
\caption{The associated quiver $Q(\tau)$ to the triangulation $\tau$.}
\label{Fig:Q-tau-2-punctured}
\end{center}
\end{figure}
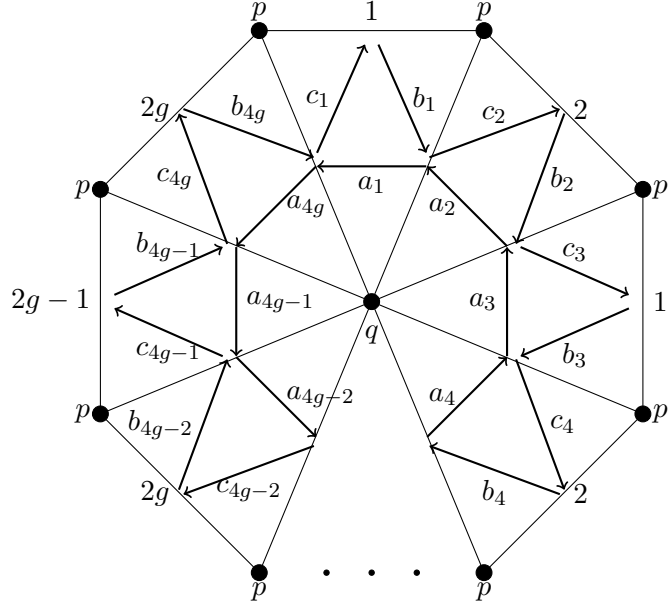

\end{ex}

\begin{lemma}\label{lemma:last-cancellation--g-powers} Let $\surf$ be a twice-punctured closed surface of positive genus, and let $\tau$ be the triangulation of $\surf$ depicted in Figure \ref{Fig:8g-4g-triangulation}. If $V\in\RA{\Qtau}$ is a potential involving only $\geq 2$-powers of $g$-cycles, then $(\Qtau,\Staux+V)$ is right-equivalent to $(\Qtau,\Staux)$ for any choice $\xx=(x_p,x_q)$ of non-zero scalars.
\end{lemma}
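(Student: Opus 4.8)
The plan is to remove the higher $g$‑powers of $V$ one at a time, in order of increasing length, always coming back to a QP of the form $(\Qtau,\Staux+V')$ with $V'$ again a sum of $\ge 2$‑powers of $g$‑cycles but with $\short(V')>\short(V)$; letting the length tend to infinity then proves the lemma. Recall that $\Staux=\Ttau+x_p\Gsymbol(p)+x_q\Gsymbol(q)$, that $\Gsymbol(p)$ and $\Gsymbol(q)$ have lengths $8g$ and $4g$ and use disjoint sets of arrows, so every $g$‑power has length a multiple of $4g$ and $\short(V)\in\{8g,12g,16g,\dots\}$. First I would fix an arrow $\alpha_p$ in the $g$‑orbit of $p$ and an arrow $\alpha_q$ in the $g$‑orbit of $q$ lying in two distinct triangles of $\tau$; then the $3$‑cycles $f^2(\alpha_p)f(\alpha_p)\alpha_p$ and $f^2(\alpha_q)f(\alpha_q)\alpha_q$ of $\Ttau$ share no arrows.

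For the inductive step, set $\ell=\short(V)$; the length‑$\ell$ part of $V$ is a summand $\nu'\Gsymbol(p)^m$ (present iff $8gm=\ell$) and/or a summand $\nu\Gsymbol(q)^n$ (present iff $4gn=\ell$), and I would remove these one at a time, the $\Gsymbol(p)$‑summand first when both occur. To kill $\nu'\Gsymbol(p)^m$, apply the unitriangular $R$‑algebra automorphism $\varphi$ with $\varphi(\alpha_p)=\alpha_p-\tfrac{\nu'}{x_p}\alpha_p\Gsymbol(p)^{m-1}$ and $\varphi=\mathrm{id}$ on every other arrow. Using $\Gsymbol(p)=G(m_{\alpha_p}-1,g(\alpha_p))\alpha_p$ one gets $\varphi(x_p\Gsymbol(p))=x_p\Gsymbol(p)-\nu'\Gsymbol(p)^m$, so $\varphi(\Staux+V)\sim_{\operatorname{cyc}}\Staux+U+W$, where $U$ collects the remaining summands of $V$ together with $g$‑powers of length $>\ell$ (all the ``self‑interaction'' terms $\varphi(V)-V$), and
$$
W=\varphi(\Ttau)-\Ttau=-\tfrac{\nu'}{x_p}f^2(\alpha_p)f(\alpha_p)\alpha_p\Gsymbol(p)^{m-1}.
$$
A rotation of $W$, together with the identity $g^{m_{\alpha_p}}=\mathrm{id}$, rewrites it in the shape $\lambda f(a)aG(t,g^{-t}(a))c$ of Lemma \ref{lemma:very-technical-lemma} with $a=\alpha_p$, with $c=f^2(\alpha_p)$ an \emph{arrow}, and with $t=(m-1)m_{\alpha_p}$. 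Hence Corollary \ref{coro:essential} applies, and I would use it — with its integer parameter chosen equal to $\ell$ — to replace $W$ by a potential $\xi$ involving only positive powers of $g$‑cycles, with $\short(\xi)>\ell$.

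The two hypotheses needed for this choice of parameter are $\ell\le\short(U)$ and $\ell<2\short(W)-3$. The second is immediate: $\short(W)=3+(m-1)8g$, and $2\bigl(3+(m-1)8g\bigr)-3>8gm$ for all $m\ge 2$. For the first, note that if $V$ also carried a summand $\nu\Gsymbol(q)^n$ of length $\ell$ then, having removed only $\Gsymbol(p)^m$, the potential still contains $\Gsymbol(q)^n$ and hence $\short(U)=\ell$; and if it did not, then every surviving summand of $V$ and every term of $\varphi(V)-V$ already has length $>\ell$, so $\short(U)>\ell$. A length count finishes the step: since $\short(\xi)>\ell\ge 8g$ and the only $g$‑cycles of length $\le 8g$ are $\Gsymbol(q)$ and $\Gsymbol(p)$ themselves, no degree‑$1$ $g$‑cycle is created, so $V_1:=U+\xi$ is again a sum of $\ge 2$‑powers of $g$‑cycles, with $\short(V_1)>\ell$. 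If a $\Gsymbol(q)$‑summand of length $\ell$ was present, I would then repeat the construction using $\varphi(\alpha_q)=\alpha_q-\tfrac{\nu}{x_q}\alpha_q\Gsymbol(q)^{n-1}$ — and now \emph{both} length‑$\ell$ cycles are already gone, so $\short(U)>\ell$ automatically and the argument goes through unchanged. In every case the output $V'$ satisfies $\short(V')\ge\ell+4g$.

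Finally, the iteration converges: the lengths $\short(V),\short(V_1),\dots$ strictly increase, hence tend to infinity, and the automorphisms used have depths tending to infinity as well ($\varphi$ has depth $\ell-8g$, resp.\ $\ell-4g$, and the automorphism furnished by Corollary \ref{coro:essential} has depth at least $\min(\ell-3,\short(W)-3)$). So the infinite composition of all these automorphisms converges to a right‑equivalence $(\Qtau,\Staux+V)\to(\Qtau,\Staux)$, as claimed. The step I expect to be the main obstacle is the one just highlighted: the garbage $f^2(\alpha)f(\alpha)\alpha\,\Gsymbol(\cdot)^{k-1}$ is \emph{shorter} than the cycle it replaces, so converting it back into $g$‑powers via Corollary \ref{coro:essential} could a priori reintroduce cycles shorter than $\ell$; ruling this out is exactly what forces the order of removal ($\Gsymbol(p)$‑powers before $\Gsymbol(q)$‑powers of the same length) and the choice of $\ell$ as the parameter of the corollary, and it is here that the valencies $8g$ and $4g$ of the triangulation of Figure \ref{Fig:8g-4g-triangulation} are used.
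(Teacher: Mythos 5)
Your proof is correct and follows essentially the same strategy as the paper's own proof: iteratively apply unitriangular automorphisms of the form $\alpha\mapsto\alpha-\tfrac{\nu}{x}\,\alpha\,\Gsymbol^{k-1}$ to cancel the lowest $g$-power against $x_p\Gsymbol(p)+x_q\Gsymbol(q)$, feed the resulting leftover $fg$-term $-\tfrac{\nu}{x}\,f^2(\alpha)f(\alpha)\alpha\,\Gsymbol^{k-1}$ into Corollary \ref{coro:essential} to convert it back into $\geq 2$-powers of $g$-cycles with strictly larger $\short$, and pass to the infinite composition. One small remark: your insistence on removing the $\Gsymbol(p)$-power before a $\Gsymbol(q)$-power of the same length is not actually forced --- when $8gm=4gn=\ell$ with $m,n\geq 2$ one has $n=2m\geq 4$, and the bound $2\short(W)-3>\ell$ holds in either order, so either puncture can be treated first; accordingly the paper simply cancels the least power of each of $\Gsymbol(p)$ and $\Gsymbol(q)$ at every step of a sequence indexed by an integer $m$ increasing by $1$, rather than restricting to minimal-length terms.
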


\begin{proof}
Let $g$ be the genus of $\surf$. Then
\begin{equation*}
V\sim_{\operatorname{cyc}} \sum_{n=2}^\infty\nu_{p,n}(\Gsymbol(p))^n+\sum_{n=2}^\infty\nu_{q,n}(\Gsymbol(q))^n
\end{equation*}
for some scalars $\nu_{p,n}$ and $\nu_{q,n}$ for $n\geq 2$.
Note that $\short(V)\geq 2\val_\tau(q)=8g$. 

\begin{claim}\label{claim:sequence-Vm} There exist a sequence $(V_m)_{m=8g}^{\infty}$ of potentials on $\Qtau$, and a sequence $(\varphi_m)_{m=8g}^{\infty}$ of unitriangular $R$-algebra automorphisms of $\RA{\Qtau}$, satisfying the following properties:
\begin{enumerate}
\item $V_{8g}=V$;
\item $\lim_{m\to\infty}\depth(\varphi_m)=\infty$;
\item for every $m\geq8g$:
\begin{enumerate}
\item $\varphi_{m}$ is a right-equivalence $(\Qtau,\Staux+V_m)\rightarrow(\Qtau,\Staux+V_{m+1})$;
\item $V_m$ involves only $\geq2$-powers of $g$-cycles;
\item $\short(V_m)\geq m$.
\end{enumerate}
\end{enumerate}
\end{claim}

\begin{proof}[Proof of Claim \ref{claim:sequence-Vm}] Start by setting $V_{8g}=V$. Let $a_p$ (resp. $a_q$) be an arrow lying in the $g$-orbit that surrounds $p$ (resp. $q$). Suppose that for a fixed value of $m\geq 8g$ we have already defined a potential $V_m$ involving only $\geq 2$-powers of $g$-cycles and satisfying $\short(V_m)\geq m$. We shall use $V_m$ to define $V_{m+1}$ and $\varphi_{m}$. Write:
\begin{equation*}
V_m\sim_{\operatorname{cyc}} \sum_{n=2}^\infty\lambda_{p,n}(\Gsymbol(a_p))^n+\sum_{n=2}^\infty\lambda_{q,n}(\Gsymbol(a_q))^n
\end{equation*}
with $\lambda_{p,n},\lambda_{q,n}\in K$ for $n\geq 2$. Set $r_{p,m}$ (resp. $r_{q,m}$) 
to be the first value of $n$ for which $\lambda_{p,n}\neq 0$ (resp. $\lambda_{q,n}\neq 0$) if such an $n$ exists, and $\infty$ if such an $n$ does not exist. Note that $\short(V_m)=\min(8gr_{p,m},4gr_{q,n})\geq 8g$.

Define an $R$-algebra homomorphism $\Upsilon_{p,m}:\RA{\Qtau}\rightarrow\RA{\Qtau}$ by means of the rule
\begin{eqnarray*}
\Upsilon_{p,m}&:& a_p\mapsto a_p-\frac{\lambda_{p,r_{p,m}}}{x_p}a_p(\Gsymbol(a_p))^{r_{p,m}-1}.
\end{eqnarray*}
Since $r_{p,m}-1>0$, $\Upsilon_{p,n}$ is a unitriangular automorphism, its depth is $8g(r_{p,m}-1)$. Direct computation shows that
\begin{eqnarray*}
\Upsilon_{p,m}(\Staux+V_m) &\sim_{\operatorname{cyc}}& 
\Staux+U+W,
\end{eqnarray*}
where
\begin{eqnarray*}
U&=& -\lambda_{p,r_{p,m}}(\Gsymbol(a_p))^{r_{p,m}}+\Upsilon_{p,m}\left(\sum_{n=r_{p,m}}^\infty\lambda_{p,n}(\Gsymbol(a_p))^n\right)+\sum_{n=r_{q,m}}^\infty\lambda_{q,n}(\Gsymbol(a_q))^n,\\
W&=&-\frac{\lambda_{p,r_{p,m}}}{x_p}f(a_p)a_p(\Gsymbol(a_p))^{r_{p,m}-1}f^2(a_p).
\end{eqnarray*}
Note that $\short(U)\geq m$ and $2\short(W)-3=2*8g(r_{p,m}-1)+3\geq 8gr_{p,m}+3>8gr_{p,m}\geq m$. So, applying Corollary \ref{coro:essential}, we see that there exists a unitriangular $R$-algebra automorphism $\Pi_{p,m}$ of $\RA{\Qtau}$ that has depth at least $\min(m-3,8g(r_{p,m}-1))$ and serves as a right-equivalence between $\Staux+U+W$ and $\Staux+U+\xi$ for some potential $\xi$ that involves only positive powers of $g$-cycles and satisfies $\short(\xi)>m\geq 8g$. These last inequalities imply that, actually, $\xi$ involves only $\geq2$-powers of $g$-cycles.

Now, we can definitely write
\begin{equation}
U\sim_{\operatorname{cyc}} \sum_{n=r_{p,m}+1}^\infty\kappa_{p,n}(\Gsymbol(a_p))^n+\sum_{n=r_{q,m}}^\infty\lambda_{q,n}(\Gsymbol(a_q))^n
\end{equation}
for some scalars $\kappa_{p,n}\in K$. Define an $R$-algebra homomorphism $\Upsilon_{q,m}:\RA{\Qtau}\rightarrow\RA{\Qtau}$ by means of the rule
\begin{eqnarray*}
\Upsilon_{q,m} &:& a_q\mapsto a_q-\frac{\lambda_{q,n}}{x_q}a_q(\Gsymbol(a_q))^{r_{q,m}-1}.
\end{eqnarray*}
Since $r_{q,m}-1>0$, $\Upsilon_{q,m}$ is a unitriangular automorphism, its depth is $4g(r_{q,m}-1)$. Direct computation shows that
\begin{eqnarray*}
\Upsilon_{q,m}(\Staux+U+\xi)&\sim_{\operatorname{cyc}}&
\Staux+U'+W',
\end{eqnarray*}
where
\begin{eqnarray*}
U' &=& -\lambda_{q,r_{q,m}}(\Gsymbol(a_q))^{r_{q,m}}+\sum_{n=r_{p,m}+1}^\infty\kappa_{p,n}(\Gsymbol(a_p))^n+\Upsilon_{q,n}\left(\sum_{n=r_{q,m}}^\infty\lambda_{q,n}(\Gsymbol(a_q))^n\right)+\Upsilon_{q,m}(\xi),\\
W'&=&-\frac{\lambda_{q,r_{q,m}}}{x_q}f(a_q)a_q(\Gsymbol(a_q))^{r_{q,m}-1}f^2(a_q).
\end{eqnarray*}
Note that $\short(U')>m$ and $2\short(W')-3=2*4g(r_{q,m}-1)+3\geq 4gr_{q,m}+3>4gr_{q,m}\geq m$. So, applying Corollary \ref{coro:essential}, we see that there exists a unitriangular $R$-algebra automorphism $\Pi_{q,m}$ of $\RA{\Qtau}$ that has depth at least $\min(m-3,4g(r_{q,m}-1))$ and serves as a right-equivalence between $\Staux+U'+W'$ and $\Staux+U'+\xi'$ for some potential $\xi'$ that involves only positive powers of $g$-cycles and satisfies $\short(\xi')>m\geq 8g$. These last inequalities imply that, actually, $\xi'$ involves only $\geq2$-powers of $g$-cycles.

It is clear that $U'$ involves only positive powers of $g$-cycles; this powers are actually greater than 1 because $\short(U')>m\geq 8g$. So, if we set $V_{m+1}=U'+\xi'$ and $\varphi_m=\Pi_{q,m}\Upsilon_{q,m}\Pi_{p,m}\Upsilon_{p,m}$, we see that $\varphi_m$ is a right-equivalence $(\Qtau,\Staux+V_m)\rightarrow(\Qtau,\Staux+V_{m+1})$, that $V_{m+1}$ involves only $\geq2$-powers of $g$-cycles, and that $\short(V_{m+1})\geq m+1$.

From the previous paragraph we deduce that the sequences $(V_m)_{m\geq 8g}$ and $(\varphi_m)_{m\geq 8g}$ satisfy the third condition stated in Claim \ref{claim:sequence-Vm}. Moreover, since $m\leq \short(V_m)=\min(8gr_{p,m},4gr_{q,m})$ for every $m\geq 8g$, we deduce that $\lim_{m\to\infty}r_{p,m}=\infty=\lim_{m\to\infty}r_{q,m}$. This and the inequalities
\begin{eqnarray*}
\depth(\varphi_m) &\geq &
\min(\depth(\Pi_{q,m}),\depth(\Upsilon_{q,m}),\depth(\Pi_{p,m}),\depth(\Upsilon_{p,m}))\\
&\geq&
\min(\min(m-3,4g(r_{q,m}-1)), 4g(r_{q,m}-1), \min(m-3,8g(r_{p,m}-1)),8g(r_{p,m}-1))
\end{eqnarray*}
imply that $\lim_{m\to\infty}\depth(\varphi_m)=\infty$.

Our Claim \ref{claim:sequence-Vm} is proved.
\end{proof}

Lemma \ref{lemma:last-cancellation--g-powers} follows from an obvious combination of Claim \ref{claim:sequence-Vm} and \cite[Lemma 2.4]{Labardini4}.
\end{proof}

\begin{prop}\label{prop:g-cycles-always-appear-2-punctures} Let $\surf$ be a twice-punctured closed surface of positive genus, and let $\tau$ be the triangulation of $\surf$ depicted in Figure \ref{Fig:8g-4g-triangulation}. If $W\in\RA{\Qtau}$ is a potential that involves only positive powers of $g$-cycles and such that $(\Qtau,\Ttau+W)$ is a non-degenerate QP, then $W$ involves each of the $g$-cycles that arise from the two punctures $p$ and $q$ of $\surf$, that is, $\Ttau+W=\Staux+V$ for some choice $\xx=(x_p,x_q)$ of non-zero scalars and some potential $V$ involving only $\geq 2$-powers of $g$-cycles.
\end{prop}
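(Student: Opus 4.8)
I would prove the contrapositive: if $W$ is a $K$-linear combination of positive powers of the two $g$-cycles $\Gsymbol(p)$ and $\Gsymbol(q)$ in which at least one of the two \emph{short} $g$-cycles fails to occur, then $(\Qtau,\Ttau+W)$ is a degenerate QP. By symmetry it suffices to treat the case in which $\Gsymbol(q)$, the short $g$-cycle around the valency-$4g$ puncture $q$, is absent; the case of $\Gsymbol(p)$ is identical, with $8g=\val_\tau(p)$ in place of $4g=\val_\tau(q)$. After a preliminary clean-up with the tools of Section \ref{sec:preliminaries} (Lemma \ref{lemma:Af-Afg-inequalities}, Proposition \ref{prop:replacing-with-sums-of-powers-of-g-cycles} and Corollary \ref{coro:essential}, together with the rescaling automorphisms $a\mapsto a-\text{(leading coefficient)}\,a\,\Gsymbol(a)$ of the type used in Lemma \ref{lemma:last-cancellation--g-powers}), one may assume that, besides a term $x_p\Gsymbol(p)$ with $x_p\in K$, the potential $W$ involves only $\geq 2$-powers of $g$-cycles; in particular $W$ still contains no summand rotationally equivalent to $\Gsymbol(q)$.

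The strategy is then to exhibit an explicit finite sequence of QP-mutations of $(\Qtau,\Ttau+W)$ whose reduced outcome is not $2$-acyclic. Combinatorially these mutations are the flips that peel the $4g$ arcs $v_1,\dots,v_{4g}$ incident to $q$ off the puncture one at a time: with $a_i\colon v_i\to v_{i+1}$ the arrows of the $g$-orbit around $q$, so that $\Gsymbol(q)=a_{4g}\cdots a_1$, I would mutate successively at $v_1,v_2,\dots$, which is the standard sequence of flips driving the valency of $q$ down to $1$ and thereby placing $q$ inside a self-folded triangle, its enclosing loop $\ell$ based at the second puncture $p$. Because for twice-punctured surfaces there is no flip/QP-mutation dictionary at our disposal — in contrast with Theorem \ref{thm:flip->QP-mut-arbitrary-powers} for once-punctured surfaces — each premutation $\widetilde\mu_{v_j}$ and its reduction must be computed directly from the cyclic derivatives; but this is feasible precisely because at every stage the potential is, up to cyclic equivalence, a sum of $f$-cycles (each with coefficient $1$) and positive powers of $g$-cycles, so that Lemma \ref{lemma:f-g-fg-potentials} constrains which cycles can appear.

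The crux is a length bookkeeping. After the $j$-th mutation $q$ has valency $4g-j$, and I would track the coefficient, in the current potential, of the short $g$-cycle around $q$ — a cycle of length $4g-j$, degenerating for $j=4g-1$ into the loop at the radius arc of the self-folded triangle. This coefficient is $0$ to begin with, and it remains $0$: the contributions to it produced by $\widetilde\mu_{v_j}$ come only from the image of the previous short $g$-cycle (coefficient $0$, by induction) and from the length-$3$ terms $[\beta\alpha]\,\alpha^{*}\beta^{*}$, which are too short to be rotationally equivalent to it; and the subsequent reduction substitutes only arrows incident to $v_j$, none of which lie on the shrinking short $g$-cycle, so it leaves that coefficient untouched — exactly the kind of strict length increase $\short(W')>\short(W)$ already exploited in Lemma \ref{lemma:very-technical-lemma} and Corollary \ref{coro:essential}. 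Having driven $q$ to valency $1$, I would mutate once more at the enclosing loop $\ell$; mirroring Labardini's computation of the flip at a self-folded triangle, $\widetilde\mu_\ell$ produces a $2$-cycle on the two arrows adjacent to $\ell$ whose only possible canceller during reduction is the term coming from the (length-$1$) short $g$-cycle at $q$. Since that coefficient is $0$, the $2$-cycle survives reduction, $(\Qtau,\Ttau+W)$ is degenerate, and we are done: both $\Gsymbol(p)$ and $\Gsymbol(q)$ must occur in $W$, so $\Ttau+W=\Staux+V$ with $\xx=(x_p,x_q)$ the pair of their (nonzero) coefficients and $V$ a sum of $\geq 2$-powers of $g$-cycles.

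The step I expect to be the main obstacle is this explicit mutation bookkeeping itself: lacking the flip/QP-mutation correspondence that makes the once-punctured argument (Theorem \ref{thm:flip->QP-mut-arbitrary-powers}) painless, one must carry out — or uniformly control — all $4g-1$ (respectively $8g-1$) mutations at the fan arcs together with their reductions, and verify rigorously that the shrinking short $g$-cycle never regains a nonzero coefficient. A secondary delicate point is the assertion that, at the flip of $\ell$, the \emph{unique} potential term capable of cancelling the newborn $2$-cycle is the short $g$-cycle at $q$; pinning this down is where rotational disjointness from $\Ttau$ and the $\short$-estimates for $f$-, $g$- and $fg$-cycles from Section \ref{sec:preliminaries} are brought to bear.
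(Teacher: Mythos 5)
Your route is genuinely different from the paper's. You argue the contrapositive and propose to detect degeneracy by driving a puncture into a self-folded triangle and exhibiting a surviving $2$-cycle; the paper never goes near self-folded triangles. Instead it invokes \cite[Proposition 2.4]{GLFS} as a packaged criterion (exhibiting a suitable vertex set $I=\{2g+1,\dots,6g\}$) to conclude immediately that the coefficient $y$ of $\Gsymbol(q)$ is nonzero, then performs the $2g$ flips at the arcs $1,\dots,2g$ (the loops at $p$, none of which is incident to $q$), checks that these flips yield a triangulation $\sigma$ in which $p$ and $q$ have swapped roles while the coefficient $z$ of the $p$-cycle is carried through unchanged, and applies \cite[Proposition 2.4]{GLFS} a second time. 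What you propose is, in effect, a re-derivation of \cite[Proposition 2.4]{GLFS} from scratch; the paper is organised precisely so as not to have to do this.

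Beyond that stylistic difference there are two genuine gaps. First, the dismissal of the $p$-case ``by symmetry'' is not justified: the triangulation of Figure \ref{Fig:8g-4g-triangulation} is emphatically \emph{not} symmetric in $p$ and $q$ (valencies $8g$ and $4g$; $p$ carries loop-arcs, $q$ does not), and the flip sequence you describe for $q$ --- flipping the radial arcs one by one --- \emph{raises} $p$'s valency at every step. There is no analogous ``fan'' of $8g-1$ arcs at $p$ to flip; to reduce $p$'s valency one must first flip the $2g$ boundary loops, which is exactly the paper's mutation sequence. So the $p$-case silently requires the very trick you have not performed. Second, once you flip a radial arc and $q$'s valency drops below $4$, the intermediate triangulations violate \eqref{eq:valency-at-least-4} (and soon \eqref{eq:no-double-arrows}, as $2$-cycles appear around $q$), so Lemma \ref{lemma:f-g-fg-potentials} no longer classifies the cycles, the potential no longer has the clean ``$\Ttau$ plus powers of $g$-cycles'' shape you rely on, and the $\short$-estimates you extrapolate from Lemma \ref{lemma:very-technical-lemma} and Corollary \ref{coro:essential} (which are results about right-equivalences on a \emph{fixed} quiver satisfying both conditions) do not transfer across mutations. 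The ``length-$1$ short $g$-cycle at $q$'' you plan to track at the final step would be a loop, which does not exist as an arrow of a $2$-acyclic quiver, so the coefficient is ill-defined exactly where the argument needs to close. None of this is beyond repair --- it is essentially the work done in the proof of \cite[Proposition 2.4]{GLFS} --- but it is far more than the sketch delivers, and the paper's use of that proposition as a black box, together with the short $p\leftrightarrow q$ swapping sequence, is precisely what makes the theorem tractable.
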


\begin{proof} With the notation of Figures \ref{Fig:8g-4g-triangulation} and \ref{Fig:Q-tau-2-punctured}, let us write 
\begin{eqnarray*}
W&=&ya_1a_2\ldots a_{4g}+A+z\left(\prod_{j=0}^{g-1}b_{4(g-j)}c_{4(g-j)-2}b_{4(g-j)-3}c_{4(g-j)-1}b_{4(g-j)-2}c_{4(g-j)}b_{4(g-j)-1}c_{4(g-j)-3}\right)\\
& &+B,  \text{with}\\
A&=&
\sum_{n=2}^\infty y_n(a_1a_2\ldots a_{4g})^{n} \ \text{and}\\
B&=&
\sum_{n=2}^\infty z_n\left(\prod_{j=0}^{g-1}b_{4(g-j)}c_{4(g-j)-2}b_{4(g-j)-3}c_{4(g-j)-1}b_{4(g-j)-2}c_{4(g-j)}b_{4(g-j)-1}c_{4(g-j)-3}\right)^n.
\end{eqnarray*}

If we set $I=\{2g+1,2g+2,\ldots,6g-1,6g\}$, then $(\Qtau,\Ttau+W)$ and $I$ satisfy the hypotheses of \cite[Proposition 2.4]{GLFS}, and we deduce that $y\neq 0$.

Note that for every $k\in\{1,\ldots,2g-1\}$, the quiver $\widetilde{\mu}_k\widetilde{\mu}_{k-1}\ldots\widetilde{\mu}_2\widetilde{\mu}_1(\Qtau)$ does not have 2-cycles incident to the vertex labelled $k+1$. Therefore, the QP $\mu_{2g}\mu_{2g-1}\ldots\mu_2\mu_1(\Qtau,\Ttau+W)$ is right-equivalent to the reduced part of the QP $\widetilde{\mu}_{2g}\widetilde{\mu}_{2g-1}\ldots\widetilde{\mu}_2\widetilde{\mu}_1(\Qtau,\Ttau+W)$, whose underlying quiver and potential are $\widetilde{\mu}_{2g}\ldots\widetilde{\mu}_1(\Qtau)$ and
\begin{eqnarray*}
\widetilde{\mu}_{2g}\ldots\widetilde{\mu}_1(\Ttau+W)
&=&
\left(\sum_{j=1}^{4g}a_j[b_jc_j]\right)+ya_1\ldots a_{4g}+A+[B]\\
&+&
z\left(\prod_{j=0}^{g-1}[b_{4(g-j)}c_{4(g-j)-2}][b_{4(g-j)-3}c_{4(g-j)-1}][b_{4(g-j)-2}c_{4(g-j)}][b_{4(g-j)-1}c_{4(g-j)-3}]\right)\\
&+&
\left(\sum_{j=1}^{2g}c_j^*b_j^*[b_jc_j]+c_{j+2}^*b_j^*[b_jc_{j+2}]+c_{j}^*b_{j+2}^*[b_{j+2}c_j]+c_{j+2}^*b_{j+2}^*[b_{j+2}c_{j+2}]\right).
\end{eqnarray*}

Consider the QP $(\widetilde{\mu}_{2g}\ldots\widetilde{\mu}_1(\Qtau),\overline{S})$, where
\begin{eqnarray*}
\overline{S} &=&
\left(\sum_{j=1}^{4g}a_j[b_jc_j]\right)+ya_1\ldots a_{4g}+A+
\left(\sum_{j=1}^{2g}c_j^*b_j^*[b_jc_j]+c_{j+2}^*b_{j+2}^*[b_{j+2}c_{j+2}]\right),
\end{eqnarray*}
and let $(Q,S)$ be its reduced part, computed according to the limit process with which Derksen-Weyman-Zelevinsky \cite[Theorem 4.6]{DWZ1} prove their Splitting Theorem. Note the presence of the sum $\sum_{j=1}^{4g}a_j[b_jc_j]$ in $\overline{S}$. Then $Q=Q(\sigma)$, where $\sigma$ is a triangulation that can be obtained from $\tau$ by applying an orientation-preserving homeomorphism of $(\Sigma,\mathbb{M})$ that exchanges $p$ and $q$ (thus $\tau$ and $\sigma$ have the same shape, sketched in Figure \ref{Fig:Q-tau-2-punctured}; see also Example \ref{ex torus} below). Moreover, since no arrow of the form $a_j$ or $[b_jc_j]$ appears in any of the terms of the potential 
\begin{eqnarray*}
W'&:=&
z\left(\prod_{j=0}^{g-1}[b_{4(g-j)}c_{4(g-j)-2}][b_{4(g-j)-3}c_{4(g-j)-1}][b_{4(g-j)-2}c_{4(g-j)}][b_{4(g-j)-1}c_{4(g-j)-3}]\right)+[B]\\
&&
+\left(\sum_{j=1}^{2g}c_{j+2}^*b_j^*[b_jc_{j+2}]+c_{j}^*b_{j+2}^*[b_{j+2}c_j]\right),
\end{eqnarray*}
the QP $(Q(\sigma),S+W')$ is a reduced part of $(\widetilde{\mu}_{2g}\ldots\widetilde{\mu}_1(\Qtau),\widetilde{\mu}_{2g}\ldots\widetilde{\mu}_1(\Ttau+W))$ and hence is (right-equivalent to) the mutation ${\mu}_{2g}\ldots{\mu}_1(\Qtau,\Ttau+W)$. Furthermore, from the fact that no arrow of the form $[b_jc_\ell]$ with $j\neq \ell$ appears in any of the terms of $\overline{S}$ we deduce that the coefficient in $S$ of any of the rotations of the cycle
$$
\left(\prod_{j=0}^{g-1}[b_{4(g-j)}c_{4(g-j)-2}][b_{4(g-j)-3}c_{4(g-j)-1}][b_{4(g-j)-2}c_{4(g-j)}][b_{4(g-j)-1}c_{4(g-j)-3}]\right)
$$
is $0$. Therefore, the coefficient of this cycle in $S+W'$ is $z$ (and its proper rotations do not appear).

The non-degeneracy of $(\Qtau,\Ttau+W)$ implies the non-degeneracy of $(Q(\sigma),S+W')$. Furthermore, it is easy to see that if we set $I=\{2g+1,2g+2,\ldots,6g-1,6g\}$, then $(Q(\sigma),S+W')$ and $I$ satisfy the hypotheses of \cite[Proposition 2.4]{GLFS}, from which we deduce that $z\neq 0$. This finishes the proof of Proposition \ref{prop:g-cycles-always-appear-2-punctures}.
\end{proof}

\begin{ex}\label{ex torus}
Figure \ref{fig:Flips toro dos ponchaduras} sketches the flip sequence in the proof of Proposition \ref{prop:g-cycles-always-appear-2-punctures} in the case of a twice-punctured torus. Note that the first and last triangulations have the same shape.

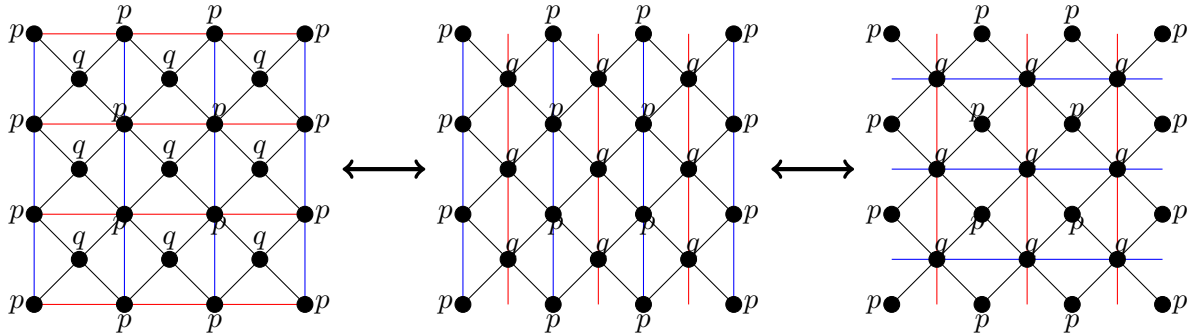
\begin{figure}[H]
\begin{center}
\begin{tikzpicture}[y=.1cm, x=.1cm,font=\normalsize]

\draw [red] (-18,18) to (18,18);
\draw [red] (-18,6) to (18,6);
\draw [red] (-18,-6) to (18,-6);
\draw [red] (-18,-18) to (18,-18);

\draw [blue] (-18,-18) to (-18,18);
\draw [blue] (-6,-18) to (-6,18);
\draw [blue] (6,-18) to (6,18);
\draw [blue] (18,-18) to (18,18);

\filldraw[fill=black!100,draw=black!100] (0,0) circle (3pt)    node[anchor=north] {};
\filldraw[fill=black!100,draw=black!100] (12,0) circle (3pt)    node[anchor=north] {};
\filldraw[fill=black!100,draw=black!100] (-12,0) circle (3pt)    node[anchor=north] {};
\filldraw[fill=black!100,draw=black!100] (0,12) circle (3pt)    node[anchor=north] {};
\filldraw[fill=black!100,draw=black!100] (0,-12) circle (3pt)    node[anchor=north] {};
\filldraw[fill=black!100,draw=black!100] (-12,-12) circle (3pt)    node[anchor=north] {};
\filldraw[fill=black!100,draw=black!100] (-12,12) circle (3pt)    node[anchor=north] {};
\filldraw[fill=black!100,draw=black!100] (12,12) circle (3pt)    node[anchor=north] {};
\filldraw[fill=black!100,draw=black!100] (12,-12) circle (3pt)    node[anchor=north] {};

\filldraw[fill=black!100,draw=black!100] (-18,-18) circle (3pt)    node[anchor=north] {};
\filldraw[fill=black!100,draw=black!100] (-18,-6) circle (3pt)    node[anchor=north] {};
\filldraw[fill=black!100,draw=black!100] (-18,6) circle (3pt)    node[anchor=north] {};
\filldraw[fill=black!100,draw=black!100] (-18,18) circle (3pt)    node[anchor=north] {};
\filldraw[fill=black!100,draw=black!100] (-6,-18) circle (3pt)    node[anchor=north] {};
\filldraw[fill=black!100,draw=black!100] (-6,-6) circle (3pt)    node[anchor=north] {};
\filldraw[fill=black!100,draw=black!100] (-6,6) circle (3pt)    node[anchor=north] {};
\filldraw[fill=black!100,draw=black!100] (-6,18) circle (3pt)    node[anchor=north] {};
\filldraw[fill=black!100,draw=black!100] (6,-18) circle (3pt)    node[anchor=north] {};
\filldraw[fill=black!100,draw=black!100] (6,-6) circle (3pt)    node[anchor=north] {};
\filldraw[fill=black!100,draw=black!100] (6,6) circle (3pt)    node[anchor=north] {};
\filldraw[fill=black!100,draw=black!100] (6,18) circle (3pt)    node[anchor=north] {};
\filldraw[fill=black!100,draw=black!100] (18,-18) circle (3pt)    node[anchor=north] {};
\filldraw[fill=black!100,draw=black!100] (18,-6) circle (3pt)    node[anchor=north] {};
\filldraw[fill=black!100,draw=black!100] (18,6) circle (3pt)    node[anchor=north] {};
\filldraw[fill=black!100,draw=black!100] (18,18) circle (3pt)    node[anchor=north] {};

\node [above] at (0,0) {$q$};
\node [above] at (12,0) {$q$};
\node [above] at (-12,0) {$q$};
\node [above] at (0,12) {$q$};
\node [above] at (0,-12) {$q$};
\node [above] at (12,12) {$q$};
\node [above] at (12,-12) {$q$};
\node [above] at (-12,-12) {$q$};
\node [above] at (-12,12) {$q$};

\node [left] at (-18,-18) {$p$};
\node [left] at (-18,-6) {$p$};
\node [left] at (-18,6) {$p$};
\node [left] at (-18,18) {$p$};
\node [right] at (18,-18) {$p$};
\node [right] at (18,-6) {$p$};
\node [right] at (18,6) {$p$};
\node [right] at (18,18) {$p$};
\node [above] at (-6,18) {$p$};
\node [above] at (6,18) {$p$};
\node [below] at (-6,-18) {$p$};
\node [below] at (6,-18) {$p$};
\node at (6.6,7.5) {$p$};
\node at (6.6,-7.5) {$p$};
\node at (-6.6,7.5) {$p$};
\node at (-6.6,-7.5) {$p$};

\draw (-18,18) to (18,-18);
\draw (-6,18) to (18,-6);
\draw (6,18) to (18,6);
\draw (-18,6) to (6,-18);
\draw (-18,-6) to (-6,-18);

\draw (-18,-18) to (18,18);
\draw (-18,-6) to (6,18);
\draw (-18,6) to (-6,18);
\draw (-6,-18) to (18,6);
\draw (6,-18) to (18,-6);

\draw [red] (57,-18) to (57,18);
\draw [red] (69,-18) to (69,18);
\draw [red] (45,-18) to (45,18);

\draw [blue] (39,-18) to (39,18);
\draw [blue] (51,-18) to (51,18);
\draw [blue] (63,-18) to (63,18);
\draw [blue] (75,-18) to (75,18);

\filldraw[fill=black!100,draw=black!100] (57,0) circle (3pt)    node[anchor=north] {};
\filldraw[fill=black!100,draw=black!100] (69,0) circle (3pt)    node[anchor=north] {};
\filldraw[fill=black!100,draw=black!100] (45,0) circle (3pt)    node[anchor=north] {};
\filldraw[fill=black!100,draw=black!100] (57,12) circle (3pt)    node[anchor=north] {};
\filldraw[fill=black!100,draw=black!100] (57,-12) circle (3pt)    node[anchor=north] {};
\filldraw[fill=black!100,draw=black!100] (45,-12) circle (3pt)    node[anchor=north] {};
\filldraw[fill=black!100,draw=black!100] (45,12) circle (3pt)    node[anchor=north] {};
\filldraw[fill=black!100,draw=black!100] (69,12) circle (3pt)    node[anchor=north] {};
\filldraw[fill=black!100,draw=black!100] (69,-12) circle (3pt)    node[anchor=north] {};

\filldraw[fill=black!100,draw=black!100] (39,-18) circle (3pt)    node[anchor=north] {};
\filldraw[fill=black!100,draw=black!100] (39,-6) circle (3pt)    node[anchor=north] {};
\filldraw[fill=black!100,draw=black!100] (39,6) circle (3pt)    node[anchor=north] {};
\filldraw[fill=black!100,draw=black!100] (39,18) circle (3pt)    node[anchor=north] {};
\filldraw[fill=black!100,draw=black!100] (51,-18) circle (3pt)    node[anchor=north] {};
\filldraw[fill=black!100,draw=black!100] (51,-6) circle (3pt)    node[anchor=north] {};
\filldraw[fill=black!100,draw=black!100] (51,6) circle (3pt)    node[anchor=north] {};
\filldraw[fill=black!100,draw=black!100] (51,18) circle (3pt)    node[anchor=north] {};
\filldraw[fill=black!100,draw=black!100] (63,-18) circle (3pt)    node[anchor=north] {};
\filldraw[fill=black!100,draw=black!100] (63,-6) circle (3pt)    node[anchor=north] {};
\filldraw[fill=black!100,draw=black!100] (63,6) circle (3pt)    node[anchor=north] {};
\filldraw[fill=black!100,draw=black!100] (63,18) circle (3pt)    node[anchor=north] {};
\filldraw[fill=black!100,draw=black!100] (75,-18) circle (3pt)    node[anchor=north] {};
\filldraw[fill=black!100,draw=black!100] (75,-6) circle (3pt)    node[anchor=north] {};
\filldraw[fill=black!100,draw=black!100] (75,6) circle (3pt)    node[anchor=north] {};
\filldraw[fill=black!100,draw=black!100] (75,18) circle (3pt)    node[anchor=north] {};

\node at (57.6,1.5) {$q$};
\node at (69.6,13.5) {$q$};
\node at (57.6,13.5) {$q$};
\node at (69.6,1.5) {$q$};
\node at (57.6,-10.5) {$q$};
\node at (69.6,-10.5) {$q$};
\node at (45.6,1.5) {$q$};
\node at (45.6,13.5) {$q$};
\node at (45.6,-10.5) {$q$};

\node [left] at (39,-18) {$p$};
\node [left] at (39,-6) {$p$};
\node [left] at (39,6) {$p$};
\node [left] at (39,18) {$p$};
\node [right] at (75,-18) {$p$};
\node [right] at (75,-6) {$p$};
\node [right] at (75,6) {$p$};
\node [right] at (75,18) {$p$};
\node [above] at (51,18) {$p$};
\node [above] at (63,18) {$p$};
\node [below] at (51,-18) {$p$};
\node [below] at (63,-18) {$p$};
\node at (63.6,7.5) {$p$};
\node at (63.6,-7.5) {$p$};
\node at (51.4,7.5) {$p$};
\node at (51.4,-7.5) {$p$};

\draw (39,18) to (75,-18);
\draw (51,18) to (75,-6);
\draw (63,18) to (75,6);
\draw (39,6) to (63,-18);
\draw (39,-6) to (51,-18);

\draw (39,-18) to (75,18);
\draw (39,-6) to (63,18);
\draw (39,6) to (51,18);
\draw (51,-18) to (75,6);
\draw (63,-18) to (75,-6);

\draw [red] (114,-18) to (114,18);
\draw [red] (126,-18) to (126,18);
\draw [red] (102,-18) to (102,18);

\draw [blue] (96,12) to (132,12);
\draw [blue] (96,-12) to (132,-12);
\draw [blue] (96,0) to (132,0);

\filldraw[fill=black!100,draw=black!100] (114,0) circle (3pt)    node[anchor=north] {};
\filldraw[fill=black!100,draw=black!100] (126,0) circle (3pt)    node[anchor=north] {};
\filldraw[fill=black!100,draw=black!100] (102,0) circle (3pt)    node[anchor=north] {};
\filldraw[fill=black!100,draw=black!100] (114,12) circle (3pt)    node[anchor=north] {};
\filldraw[fill=black!100,draw=black!100] (114,-12) circle (3pt)    node[anchor=north] {};
\filldraw[fill=black!100,draw=black!100] (102,-12) circle (3pt)    node[anchor=north] {};
\filldraw[fill=black!100,draw=black!100] (102,12) circle (3pt)    node[anchor=north] {};
\filldraw[fill=black!100,draw=black!100] (126,12) circle (3pt)    node[anchor=north] {};
\filldraw[fill=black!100,draw=black!100] (126,-12) circle (3pt)    node[anchor=north] {};

\filldraw[fill=black!100,draw=black!100] (96,-18) circle (3pt)    node[anchor=north] {};
\filldraw[fill=black!100,draw=black!100] (96,-6) circle (3pt)    node[anchor=north] {};
\filldraw[fill=black!100,draw=black!100] (96,6) circle (3pt)    node[anchor=north] {};
\filldraw[fill=black!100,draw=black!100] (96,18) circle (3pt)    node[anchor=north] {};
\filldraw[fill=black!100,draw=black!100] (108,-18) circle (3pt)    node[anchor=north] {};
\filldraw[fill=black!100,draw=black!100] (108,-6) circle (3pt)    node[anchor=north] {};
\filldraw[fill=black!100,draw=black!100] (108,6) circle (3pt)    node[anchor=north] {};
\filldraw[fill=black!100,draw=black!100] (108,18) circle (3pt)    node[anchor=north] {};
\filldraw[fill=black!100,draw=black!100] (120,-18) circle (3pt)    node[anchor=north] {};
\filldraw[fill=black!100,draw=black!100] (120,-6) circle (3pt)    node[anchor=north] {};
\filldraw[fill=black!100,draw=black!100] (120,6) circle (3pt)    node[anchor=north] {};
\filldraw[fill=black!100,draw=black!100] (120,18) circle (3pt)    node[anchor=north] {};
\filldraw[fill=black!100,draw=black!100] (132,-18) circle (3pt)    node[anchor=north] {};
\filldraw[fill=black!100,draw=black!100] (132,-6) circle (3pt)    node[anchor=north] {};
\filldraw[fill=black!100,draw=black!100] (132,6) circle (3pt)    node[anchor=north] {};
\filldraw[fill=black!100,draw=black!100] (132,18) circle (3pt)    node[anchor=north] {};

\node at (114.6,1.5) {$q$};
\node at (126.6,13.5) {$q$};
\node at (114.6,13.5) {$q$};
\node at (126.6,1.5) {$q$};
\node at (114.6,-10.5) {$q$};
\node at (126.6,-10.5) {$q$};
\node at (102.6,1.5) {$q$};
\node at (102.6,13.5) {$q$};
\node at (102.6,-10.5) {$q$};

\node [left] at (96,-18) {$p$};
\node [left] at (96,-6) {$p$};
\node [left] at (96,6) {$p$};
\node [left] at (96,18) {$p$};
\node [right] at (132,-18) {$p$};
\node [right] at (132,-6) {$p$};
\node [right] at (132,6) {$p$};
\node [right] at (132,18) {$p$};
\node [above] at (108,18) {$p$};
\node [above] at (120,18) {$p$};
\node [below] at (108,-18) {$p$};
\node [below] at (120,-18) {$p$};
\node at (120.6,7.5) {$p$};
\node at (120.6,-7.5) {$p$};
\node at (107.4,7.5) {$p$};
\node at (107.4,-7.5) {$p$};

\draw (96,18) to (132,-18);
\draw (108,18) to (132,-6);
\draw (120,18) to (132,6);
\draw (96,6) to (120,-18);
\draw (96,-6) to (108,-18);

\draw (96,-18) to (132,18);
\draw (96,-6) to (120,18);
\draw (96,6) to (108,18);
\draw (108,-18) to (132,6);
\draw (120,-18) to (132,-6);

\draw [<->, ultra thick] (23,0) to (34,0);
\draw [<->, ultra thick] (80,0) to (91,0);

\end{tikzpicture}\\
\caption{Proving Proposition \ref{prop:g-cycles-always-appear-2-punctures} for the twice-punctured torus.}
\label{fig:Flips toro dos ponchaduras}
\end{center}
\end{figure}
\end{ex}

\begin{proof}[Proof of Theorem \ref{thm:uniqueness-two-punctures}] Let $\surf$ a be twice-punctured closed surface of positive genus, and let $\tau$ be a triangulation of $\surf$ satisfying \eqref{eq:valency-at-least-4} and \eqref{eq:no-double-arrows}. By Lemma \ref{lemma:non-degenerate-potentials-contains-Ttau}, every non-degenerate potential on $\Qtau$ is right-equivalent to a potential of the form $\Ttau+U$ for some $U$ which is rotationally disjoint from $\Ttau$. By Proposition \ref{prop:replacing-with-sums-of-powers-of-g-cycles}, $\Ttau+U$ is right-equivalent to $\Ttau+W$ for some potential that involves only positive powers of $g$-cycles. Theorem \ref{thm:uniqueness-two-punctures} now follows from Proposition \ref{prop:g-cycles-always-appear-2-punctures}, Lemma \ref{lemma:last-cancellation--g-powers} and \cite[Lemma 8.5]{GLFS}.
\end{proof}
  \section*{Acknowledgements}

We thank Christof Geiss and Jan Schr\"{o}er for many helpful discussions.

The three authors were supported by the second author's grant PAPIIT-IA102215. The first two authors were supported by the second author's grant CONACyT-238754 as well. DLF received support from a \emph{C\'{a}tedra Marcos Moshinsky} and the grant PAPIIT-IN112519.

\end{document}